\numberwithin{equation}{section}
\theoremstyle{plain}
\newtheorem{theorem}{Theorem}[section]
\newtheorem{ntheorem}{Theorem}
\newtheorem*{theorem*}{Theorem}
\newtheorem{lemma}[theorem]{Lemma}
\newtheorem{proposition}[theorem]{Proposition}
\newtheorem{corollary}[theorem]{Corollary}
\theoremstyle{definition} 
\newtheorem{definition}[theorem]{Definition}
\newtheorem{example}[theorem]{Example}
\newtheorem{remark}[theorem]{Remark}
\DeclareMathOperator{\En}{End}
\DeclareMathOperator{\Aut}{Aut}
\DeclareMathOperator{\Mod}{mod}
\DeclareMathOperator{\proj}{proj}
\DeclareMathOperator{\inj}{inj}
\DeclareMathOperator{\Id}{Id}
\DeclareMathOperator{\Ker}{Ker}
\DeclareMathOperator{\Imm}{Im}
\DeclareMathOperator{\Hom}{Hom}
\DeclareMathOperator{\rad}{rad}
\DeclareMathOperator{\Cone}{Cone}
\DeclareMathOperator{\CCone}{Cocone}
\DeclareMathOperator{\EE}{\mathbb{E}}
\DeclareMathOperator{\add}{add}
\DeclareMathOperator{\Fac}{Fac}
\DeclareMathOperator{\thi}{thick}
\DeclareMathOperator{\Filt}{Filt}
\def\K{\ensuremath{\mathcal{K} }}
\newcommand{\X}{\mathcal{X}}
\newcommand{\Y}{\mathcal{Y}}
\newcommand{\Tt}{\mathscr{T}}
\newcommand{\Ww}{\mathscr{W}}
\newcommand{\T}{\mathcal{T}}
\newcommand{\F}{\mathcal{F}}
\newcommand{\U}{\mathcal{U}}
\newcommand{\W}{\mathcal{W}}
\newcommand{\C}{\mathcal{C}}
\newcommand{\D}{\mathcal{D}}
\DeclareMathOperator{\ctor}{cotor}
\DeclareMathOperator{\cctor}{c-cotor}
\DeclareMathOperator{\silt}{silt}
\DeclareMathOperator{\tautilt}{s\tau-tilt}
\DeclareMathOperator{\wide}{wide}
\DeclareMathOperator{\fwide}{l-wide}
\DeclareMathOperator{\ftor}{f-tors}
\DeclareMathOperator{\tor}{tors}
\DeclareMathOperator{\res}{res}
\DeclareMathOperator{\fres}{f-res}
\DeclareMathOperator{\fthi}{inj-thick}
\begin{document}
	
 \title{On thick subcategories of the category of projective presentations}
 \author{Monica Garcia}
 \address{Laboratoire de Mathématiques de Versailles, UVSQ, CNRS, Université Paris-Saclay. Office 3305, Bâtiment Fermat, 45 Avenue des États-Unis, 78035 Versailles CEDEX, France}
 \email{monica.garcia@uvsq.fr}
%
 \begin{abstract}
  We study thick subcategories of the category of 2-term complexes of projective modules over an associative algebra. We show that those thick subcategories that have enough injectives are in explicit bijection with 2-term silting complexes and complete cotorsion pairs. We also provide a bijection with left finite wide subcategories of the module category and prove that all these maps are compatible with previously known correspondences. We discuss possible applications to stability conditions. 
  \smallskip
  
\noindent \textbf{Keywords.} Projective presentation, thick subcategory, cotorsion pair, silting object, wide subcategory, semistability. 
 \end{abstract}
  
 \maketitle

 \tableofcontents

\section{Introduction}
In their seminal paper on $\tau$-tilting theory \cite{adachi2014tilting}, T.~Adachi, O.~Iyama and I.~Reiten studied the relationship between several classes of objects, namely, support $\tau$-tilting modules, 2-term silting complexes, and functorially finite torsion classes. Since then, driven by applications to cluster theory \cite{buan2006tilting, amiot2009cluster} and stability conditions \cite{asai2020semibricks, brustle2022stability}, among others, many classes of objects have been added to this list. In the category $\Mod \Lambda$ of finitely generated modules over an associative algebra $\Lambda$ (see \cref{preliminaries} for our assumptions), this list includes
\begin{itemize}
	\item[$-$] support $\tau$-tilting objects (\cite{ingalls2009noncrossing, adachi2014tilting, demonet2017lattice}, ...),
	\item[$-$] the lattice of torsion pairs (\cite{dickson1966torsion, ingalls2009noncrossing, iyama2015lattice}, ...),
	\item[$-$] the poset of wide subcategories (\cite{hovey2001classifying, buan2021category, buan2021tau}, ...),
\end{itemize}
to name a few. These classes of objects are known to be intimately related to each other, see for instance \cite{ingalls2009noncrossing, demonet2019tilting, asai2022wide}. Another important category associated to an algebra $\Lambda$ is the category of 2-term complexes of finitely generated projective modules, which we will denote by $\K^{[-1,0]}(\proj \Lambda)$. This category plays an essential role, for instance, in the categorification of g-vectors of cluster algebras \cite{bazier2018abhy, padrol2019associahedra}, and appears naturally as the extended co-heart of particular co-t-structures on the homotopy category $\K^b(\proj \Lambda)$ \cite{pauksztello2020co,buan2021weak}. Many of the objects in $\Mod \Lambda$ we have alluded to have a ``mirror" analog in $\K^{[-1,0]}(\proj \Lambda)$, namely
\begin{itemize}
	\item[$-$] 2-term silting complexes, as the analog of support $\tau$-tilting modules (\cite{keller1988aisles, adachi2014tilting, derksen2015general}, ...);
	\item[$-$] cotorsion pairs, as the analog of torsion pairs (\cite{salce1979cotorsion, nakaoka2019extriangulated, pauksztello2020co}, ...).
\end{itemize}
Like their counterparts in $\Mod \Lambda$, they have been shown be in one-to-one correspondence, see \cite{pauksztello2020co, adachi2022hereditary}. 
In this paper, we introduce to this last list the class of thick subcategories of $\K^{[-1,0]}(\proj \Lambda)$ and claim that they form the ``mirror" analog of wide subcategories in $\Mod \Lambda$. We explicitly relate them to cotorsion pairs and 2-term silting complexes. Before stating our results, let us recall the explicit bijections between some of the classes of objects listed above. 

\begin{ntheorem}{\cite[Theorem 1.1]{ingalls2009noncrossing}\cite[Theorem 0.5]{adachi2014tilting}\cite[Theorem 30]{marks2017torsion}\cite[Theorem 1.2]{yurikusa2018wide} \cite[Theorem 1.1]{brustle2019wall}} \label{theo-mod}
	Let $\Lambda$ be a finite-dimensional algebra over a field. There are explicit bijections between the sets of
	\begin{enumerate}
		\item Isomorphism classes of basic support $\tau$-tilting modules in $\Mod \Lambda$.
		\item Functorially finite torsion pairs in $\Mod \Lambda$.
		\item Left finite wide subcategories of $\Mod \Lambda$.
		\item Left finite semistable subcategories of $\Mod \Lambda$. 
	\end{enumerate}
\end{ntheorem}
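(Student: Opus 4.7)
The plan is to chain together three established bijections and verify their pairwise compatibility. The equivalence $(1)\!\leftrightarrow\!(2)$ is the Adachi--Iyama--Reiten theorem: to a basic support $\tau$-tilting module $M$ one assigns the functorially finite torsion pair whose torsion class is $\Fac M$, while a functorially finite torsion class $\T$ admits (up to isomorphism) a unique basic Ext-projective generator, which is support $\tau$-tilting. One then checks directly that these constructions are mutually inverse.

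For $(2)\!\leftrightarrow\!(3)$ I would follow Ingalls--Thomas and Marks--\v{S}\v{t}ov\'{\i}\v{c}ek. To a torsion class $\T$ assign
\[
\alpha(\T) \;=\; \{X\in\T \mid \text{for every } f\colon Y\to X \text{ with } Y\in\T,\; \Ker f \in \T\},
\]
which is a wide subcategory of $\Mod \Lambda$; to a wide subcategory $\W$ assign the smallest torsion class $\T(\W)=\Filt(\Fac\W)$ containing it. The essential point is that $\alpha$ restricts to a bijection from functorially finite torsion classes to \emph{left finite} wide subcategories: the latter are, by definition, those $\W$ for which $\T(\W)$ is functorially finite, which in turn corresponds via $(1)\!\leftrightarrow\!(2)$ to the Bongartz completion of a suitable partial support $\tau$-tilting object. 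Mutual inverseness reduces to checking $\alpha(\T(\W)) = \W$ and $\T(\alpha(\T)) = \T$ on the respective restricted classes.

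Finally, $(3)\!\leftrightarrow\!(4)$ is due to Yurikusa and to Br\"{u}stle--Smith--Treffinger. Given a stability function $\theta$, the subcategory $\W_\theta$ of $\theta$-semistable modules is always wide; conversely, a left finite wide subcategory can be realized as $\W_\theta$ for some $\theta$ constructed explicitly from the $g$-vectors of the $2$-term silting complex associated to the support $\tau$-tilting module corresponding to $\W$ under the composition $(3)\to(2)\to(1)$. The main obstacle is surjectivity in this last step, namely showing that every left finite wide subcategory is semistable for a suitable stability function; this uses the g-vector fan to produce a vector in the interior of the cone dual to $\W$. Once this is in place, commutativity of the triangle of bijections is a routine verification by tracing the constructions through a chosen basic support $\tau$-tilting module.
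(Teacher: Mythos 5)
Your proposal is correct and follows the same route the paper takes: Theorem A is assembled from the cited results of Adachi--Iyama--Reiten, Ingalls--Thomas/Marks--\v{S}t'ov\'\i\v{c}ek, and Yurikusa/Br\"ustle--Smith--Treffinger, using exactly the explicit maps you describe ($M\mapsto \Fac M$, the map $\alpha$, and the semistable subcategory attached to the difference of $g$-vectors). The paper does not reprove these bijections but records the same constructions you outline, so no further comment is needed.
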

The bijection from (1) to (2) takes a support $\tau$-tilting module $(M,P)$ to the torsion pair $\vartheta(M) = (\Fac(M), M^\perp)$. Here, $\Fac(M) = \{ N \in \Mod \Lambda \ | \ \exists \ M' \rightarrow N \rightarrow 0 \text{ s.e.s. with } M' \in \add(M)\} = {}^\perp(\tau M) \cap P^\perp$. The bijection from (2) to (3) is given by the map $\alpha(\T) = \{ M \in \T \ | \ \forall(g : N \rightarrow M) \in \T,\  \ker(g) \in \T\}$ for any torsion pair $(\T,\F)$. Finally, the bijection from (1) to (4) is obtained by proving that $\alpha (\Fac (M)) = M_\rho^\perp \cap \Fac(M) = \Ww_{g^{M\rho} - g^P}$, where the later is the semistable subcategory associated to the \textit{g-vector} of $M_\rho$ minus the g-vector of $P$. Here, $M_\rho$ is the basic module such that $\add(M_\rho) = \add(M_1)$ for $M_1$ satisfying that 
\begin{equation}\label{approx-tilt}
	\Lambda \rightarrow M_0 \rightarrow M_1 \rightarrow 0
\end{equation}
is a minimal left $M$-approximation of $\Lambda$. 

Both support $\tau$-tilting modules and torsion pairs turn out to have ``mirror" analogs in the extriangulated category $\K^{[-1,0]} (\proj \Lambda)$.
\begin{ntheorem}\cite[Theorem 3.2]{adachi2014tilting}\label{silt-tilt}
	Let $\Lambda$ be a finite-dimensional algebra over a field. There exists an explicit bijection between
	\begin{enumerate}
		\item[(1)] Isomorphism classes of basic support $\tau$-tilting modules in $\Mod \Lambda$.
		\item[(i)] Isomorphism classes of basic silting objects in $\K^{[-1,0]}(\proj \Lambda)$.
	\end{enumerate}
\end{ntheorem}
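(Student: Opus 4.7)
The plan is to construct explicit inverse maps in both directions using minimal projective presentations. Given a basic support $\tau$-tilting pair $(M,P)$ with $M = \bigoplus_{i=1}^r M_i$ a decomposition into indecomposables, I would send it to
\[
\Phi(M,P) = \bigoplus_{i=1}^r \bigl(P_1^i \xrightarrow{p_i} P_0^i\bigr) \;\oplus\; \bigl(P \xrightarrow{0} 0\bigr)
\]
in $\K^{[-1,0]}(\proj \Lambda)$, where $P_1^i \xrightarrow{p_i} P_0^i \to M_i \to 0$ is a minimal projective presentation. Conversely, given a basic silting object $T$ in $\K^{[-1,0]}(\proj \Lambda)$, I would decompose $T \cong T' \oplus (Q \xrightarrow{0} 0)$ with $T'$ having no nonzero summand concentrated in degree $-1$, and set $\Psi(T) = (H^0(T'), Q)$.

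The key calculation is an Auslander--Reiten--type formula identifying $\Hom_{\K^b(\proj \Lambda)}(\Phi(M,P), \Phi(M,P)[1])$ with the direct sum of $\Hom_\Lambda(M, \tau M)$ and $\Hom_\Lambda(P, M)$. Via this identification, $\Phi(M,P)$ is presilting precisely when $\Hom_\Lambda(M, \tau M) = 0$ and $\Hom_\Lambda(P, M) = 0$, which is exactly the support $\tau$-rigidity condition. A rank count then upgrades presilting to silting: basic silting objects in $\K^{[-1,0]}(\proj \Lambda)$ have exactly $n$ indecomposable summands, where $n$ is the number of simple $\Lambda$-modules, and the same is true for basic support $\tau$-tilting pairs $(M,P)$, so presilting of maximal rank is automatically silting.

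For the other direction, I would show that every indecomposable summand of a basic silting $T$ is either of the form $(Q \to 0)$ with $Q$ indecomposable projective, or is a minimal projective presentation of an indecomposable $\tau$-rigid module. The radicality of the differential follows from a minimality argument: if the differential of such a summand were not radical, a trivial summand $(Q \xrightarrow{\Id} Q)$ would split off, contradicting the reducedness of a basic silting complex. Combined with the formula above, this shows $\Psi(T)$ is support $\tau$-tilting and that $\Phi \circ \Psi = \Id$, $\Psi \circ \Phi = \Id$.

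The main obstacle is making the Auslander--Reiten--type formula precise in the presence of the projective summand $P$. One must verify that the natural map from $\Hom_{\K^b}(\Phi(M,P), \Phi(M,P)[1])$ to $\Hom_\Lambda(M, \tau M) \oplus \Hom_\Lambda(P, M)$ is a bijection, which requires carefully modding out homotopies and using that each $p_i$ is a radical map to cancel the contribution of morphisms factoring through a projective. A secondary technical point is the rank formula for basic 2-term silting complexes, which typically proceeds via Bongartz completion in the extriangulated framework of $\K^{[-1,0]}(\proj \Lambda)$.
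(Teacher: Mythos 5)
The paper does not prove this statement; it is quoted verbatim from Adachi--Iyama--Reiten \cite[Theorem 3.2]{adachi2014tilting}, and the only content the paper adds is the description of the two maps ($U \mapsto H^0(U)$ and $(M,P) \mapsto (P_M^{-1}\oplus P \to P_M^0)$), which are exactly the maps you write down. Your sketch is the standard AIR argument and is correct in outline. One imprecision worth flagging: $\Hom_{\K^b(\proj\Lambda)}(\Phi(M,P),\Phi(M,P)[1])$ is \emph{not} literally isomorphic to $\Hom_\Lambda(M,\tau M)\oplus\Hom_\Lambda(P,M)$; the correct statement (AIR, Lemma 3.4 and Proposition 2.4) identifies the first summand with the dual of a \emph{stable} Hom-space (morphisms to $\tau M$ modulo those factoring through injectives), and what one actually proves and uses is only the equivalence of the two vanishing conditions. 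Since your argument only ever invokes the vanishing, the logic survives, but the "bijection" as stated would fail in general. The remaining ingredients you cite --- radicality of the differential forcing indecomposable presilting summands to be either $Q[1]$ or minimal presentations of indecomposable $\tau$-rigid modules, the rank count $|M|+|P|=n$ matching the $n$ summands of a basic $2$-term silting complex via Bongartz completion --- are precisely the steps of the AIR proof.
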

This bijection takes any 2-term silting object $U$ and sends it to $H^0(U)$. Its inverse sends a support $\tau$-tilting module $(M,P)$ to \begin{tikzcd}[cramped, sep=small] 
		P^{-1}_M \oplus P \arrow[d, "{(f,0)}"] \\ P^0_M \end{tikzcd}, where \begin{tikzcd}[cramped, sep=small] P^{-1}_M \arrow[d, "f"] \\ P^0_M \end{tikzcd} is a minimal projective presentation of $M$. 

Let $U \in \K^{[-1,0]}(\proj \Lambda)$ be a basic silting object. Inside of $\K^{[-1,0]}(\proj \Lambda)$ there is a conflation 
\begin{equation}\label{approx-silt}
	\Lambda \xrightarrow{f} U_0 \rightarrow U_1 \overset{g}{\dashrightarrow} \Lambda[1]
\end{equation}
where $f$ is a minimal left $U$-approximation of $\Lambda$ and $g$ a minimal right $U$-approximation of $\Lambda[1]$. We will denote by $U_\lambda$ and $U_\rho$ the direct summands of $U$ satisfying that $U = U_\lambda \oplus U_\rho$, $U_0 \in \add(U_\lambda)$ and $U_1 \in \add(U_\rho)$, respectively. We note that the short exact sequence~(\ref{approx-tilt}) can be obtained by applying the cohomological functor $H^*(-)$ to the triangle~(\ref{approx-silt}).

To simplify notation, we let $\K_\Lambda = \K^{[-1,0]}(\proj \Lambda)$. The following is an application of a more general theorem concerning cotorsion pairs in triangulated categories: 
\begin{ntheorem}\cite[Theorem 3.6]{pauksztello2020co}\label{C} There is a well defined map $$\Phi : \ctor \K_\Lambda \rightarrow \tor \Lambda$$ between the set $\ctor \K_\Lambda$ of cotorsion pairs in $\K^{[-1,0]}(\proj \Lambda)$ and the set $\tor \Lambda$ of torsion pairs  in $\Mod \Lambda$. This map restricts to a bijection between
	\begin{enumerate}
		\item[(2)] Functorially finite torsion pairs in $\Mod \Lambda$.
		\item[(ii)] Complete cotorsion pairs in $\K^{[-1,0]}(\proj \Lambda)$.
	\end{enumerate}	
\end{ntheorem}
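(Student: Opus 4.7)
The plan is to define $\Phi$ using the cohomological functor $H^0 \colon \K_\Lambda \to \Mod \Lambda$: for a cotorsion pair $(\X, \Y)$, set $\Phi(\X, \Y) := (H^0 \X, H^0 \Y)$. This rests on two facts: $H^0$ is essentially surjective, since every module has a 2-term projective presentation, and the extriangulated bifunctor $\EE$ on $\K_\Lambda$ is closely tied to $\Hom_{\Mod \Lambda}$ under $H^0$, so that $\EE(\X, \Y) = 0$ forces Hom-vanishing between $H^0 \X$ and $H^0 \Y$. Verifying that $\Phi(\X, \Y)$ is a torsion pair then requires (a) Hom-orthogonality, from the above translation together with closure of $\X$ and $\Y$ under conflations and direct summands, and (b) the covering property: lifting $M \in \Mod \Lambda$ to a 2-term projective presentation $X_M$, applying the cotorsion-pair decomposition in $\K_\Lambda$ to obtain a conflation $X' \to X_M \to X''$, and taking $H^0$ to produce a short exact sequence $H^0 X' \hookrightarrow M \twoheadrightarrow H^0 X''$.

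For the bijection between complete cotorsion pairs and functorially finite torsion pairs, the forward direction is obtained by transporting the approximation conflations witnessing completeness through $H^0$ to functorial torsion/torsion-free approximations in $\Mod \Lambda$; minimality and universality transfer because every module morphism lifts to a chain map of projective presentations. For the inverse, I would leverage the existing bijections \cref{theo-mod} and \cref{silt-tilt}: a functorially finite torsion pair $(\T, \F)$ comes from a unique support $\tau$-tilting pair $(M, P)$, which lifts to a 2-term silting complex $U = U_\lambda \oplus U_\rho$ with $U_\rho$ capturing $P$. From $U$ one constructs $(\X, \Y)$ with $\X = \Filt(\add(U_\rho[1]) \cup \add(U_\lambda))$ and $\Y = \X^{\perp_\EE}$, which should be a complete cotorsion pair, and the identifications $H^0(U_\lambda) = M$, $H^0(U_\rho[1]) = 0$ ensure that $\Phi(\X, \Y) = (\T, \F)$.

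The main obstacle is the projective-like summands of $\K_\Lambda$, namely complexes concentrated in degree $-1$ of the form $(P \to 0)$, which are invisible to $H^0$ but essential to the cotorsion-pair data. Proving that $\Phi$ is injective on complete cotorsion pairs requires showing these ghost summands are uniquely determined by the torsion pair together with the $\EE$-orthogonality relations; on the $\tau$-tilting side this corresponds to recovering $P$ from $(\Fac M, M^\perp)$, and at the cotorsion-pair level $P$ is pinned down by the indecomposable direct summands of $\Lambda$ that split off in every left $\X$-approximation. Threading this summand tracking carefully through all compatibilities is the most delicate part of the argument.
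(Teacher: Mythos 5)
The definition of $\Phi$ you propose is not the one that works, and the discrepancy is not cosmetic. The correct map (this is how the theorem is restated in \cref{cotorsiontorsion}) is $\Phi((\X,\Y)) = (H^0(\Y),\, H^0(\Y)^{\perp})$: the torsion class is the $H^0$-image of the \emph{second} component, and the torsion-free class is defined as a $\Hom$-perpendicular, not as an $H^0$-image. Your candidate $(H^0(\X), H^0(\Y))$ fails already on the complete cotorsion pair $(\proj \K_\Lambda, \K_\Lambda)$, which corresponds to the silting object $\Lambda$: there $H^0(\X) = \add \Lambda$, which is not closed under quotients, and $H^0(\Y) = \Mod\Lambda$, so the pair is neither $\Hom$-orthogonal nor a torsion pair. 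More generally, every $\X$ contains $\proj\K_\Lambda$, so $H^0(\X)$ always contains $\add\Lambda$ and can never serve as either class of a nontrivial torsion pair in the role you assign it. The bridging ``fact'' you invoke, that $\EE(X,Y)=0$ forces $\Hom_\Lambda(H^0X, H^0Y)=0$, is false: $\EE(\Lambda, Y) = \Hom_{\D^b(\Mod\Lambda)}(\Lambda, Y[1]) = H^1(Y) = 0$ for every $Y \in \K_\Lambda$, while $\Hom_\Lambda(\Lambda, H^0Y) = H^0Y$ is usually nonzero. (The genuine relation between $\EE$ and module-theoretic data involves $\tau$ and the Nakayama functor, as in the sequence (\ref{nu-silting}), not $\Hom$ of the $H^0$'s.)

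Two further steps would also fail as written. Applying $H^0$ to a conflation $X' \rightarrowtail X_M \twoheadrightarrow X''$ yields only the right-exact sequence $H^{-1}(X'') \to H^0(X') \to M \to H^0(X'') \to 0$, not a short exact sequence, so the canonical torsion decomposition of $M$ cannot be read off the way you describe; compare the proof of \cref{cotorsion_torsion}, where only right-exactness of $H^0$ on conflations is used, and where the torsion class is identified as $\Fac H^0(U_\X \oplus U_\Y)$. In the inverse direction, $U_\rho$ is itself a two-term complex, so $U_\rho[1]$ does not lie in $\K^{[-1,0]}(\proj\Lambda)$ and $\Filt(\add(U_\rho[1]) \cup \add(U_\lambda))$ is not a subcategory of $\K_\Lambda$; moreover the extension closure $\Filt(\add U)$ of a silting object is in general strictly smaller than the correct class $\X = (\add U)^{\vee} = \CCone(\add U, \add U)$, which requires closing under cocones. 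The inverse map is given directly by $\Theta((\T,\F)) = ({}^{\perp_1}\mathcal{Z}, \mathcal{Z})$ with $\mathcal{Z} = (H^0)^{-1}(\T)$, which also dissolves your worry about ``ghost summands'': every $P[1]$ lies in $\mathcal{Z}$ automatically because $H^0(P[1])=0\in\T$, so no separate bookkeeping is needed.
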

In~\cite{adachi2022hereditary}, the authors constructed an explicit bijection $\Psi$ between cotorsion pairs in a general extriangulated category $\K$ and its silting subcategories. When $\K = \K_\Lambda$, this gives
\begin{ntheorem}\cite[Theorem 5.7]{adachi2022hereditary}\label{D}
	The following sets are in one-to-one correspondence: 
	\begin{enumerate}[(i)]
		\item Isomorphism classes of basic silting objects in $\K^{[-1,0]}(\proj \Lambda)$.
		\item Complete cotorsion pairs in $\K^{[-1,0]}(\proj \Lambda)$.
	\end{enumerate}
This correspondance takes a complete cotorsion pair $(\X, \Y)$ and sends it to $\Psi((\X, \Y)) = U$, where $U$ is a basic additive generator of the silting category $\X \cap \Y$. 
\end{ntheorem}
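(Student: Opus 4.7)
The plan is to derive this bijection as a direct specialization of \cite[Theorem 5.7]{adachi2022hereditary}, which establishes, in a suitable Hom-finite Krull--Schmidt extriangulated category, a bijection $\Psi$ between complete cotorsion pairs and silting subcategories, sending $(\X,\Y)$ to the \emph{core} $\X \cap \Y$. So the task reduces to (a) checking that $\K_\Lambda$ meets the hypotheses of the cited theorem, and (b) matching silting subcategories of $\K_\Lambda$ with isomorphism classes of basic silting objects.

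First I would verify the hypotheses for $\K_\Lambda$. Since $\K_\Lambda$ is closed under extensions in the triangulated homotopy category $\K^b(\proj \Lambda)$, it inherits a natural extriangulated structure. Hom-finiteness and the Krull--Schmidt property follow at once from $\Lambda$ being a finite-dimensional algebra over a field, and $\Lambda \oplus \Lambda[1]$ provides a projective--injective generator in the extriangulated sense. This is precisely the kind of category to which \cite[Theorem 5.7]{adachi2022hereditary} applies.

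Second I would translate the language of silting subcategories to basic silting objects. In the Krull--Schmidt setting, any silting subcategory $\mathcal{S}$ of $\K_\Lambda$ is of the form $\add(U)$ for $U$ the direct sum of one representative from each isomorphism class of indecomposable objects in $\mathcal{S}$. A Grothendieck-group rank argument (already implicit in \cref{silt-tilt}) shows that $U$ has a finite, fixed number of indecomposable summands, equal to the rank of $K_0(\proj \Lambda)$, so $U$ is genuinely a silting object of $\K_\Lambda$. Hence $\mathcal{S} \mapsto U$ is a bijection between silting subcategories of $\K_\Lambda$ and isomorphism classes of basic silting objects, and composing with $\Psi$ yields the claimed correspondence.

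The main obstacle will be a careful comparison of the notions of ``complete cotorsion pair'' and of ``silting subcategory'' used in \cite{adachi2022hereditary} with those employed here, and, more concretely, verifying that the left/right approximations witnessing completeness of the cotorsion pair associated with $\add(U)$ can be constructed for every object of $\K_\Lambda$ (and not only for $\Lambda$, for which they are provided by the minimal approximation conflation~(\ref{approx-silt})). Once this is settled, the statement of Theorem~D follows formally from the general result.
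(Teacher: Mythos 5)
Your proposal follows essentially the same route as the paper: Theorem~D is stated there as a direct application of \cite[Theorem 5.7]{adachi2022hereditary}, with \cref{siltinggenerator} used, exactly as you suggest, to replace silting subcategories by basic silting objects via the Krull--Schmidt property. The one point your plan under-emphasizes (and which the paper flags as the actual content of the reduction) is that the cited theorem concerns \emph{bounded hereditary} complete cotorsion pairs, so one must note that every complete cotorsion pair in $\K_\Lambda$ is automatically hereditary (since $\EE^2 = 0$) and bounded (since $\K_\Lambda = \Cone(\proj\K_\Lambda,\proj\K_\Lambda) = \CCone(\text{inj }\K_\Lambda,\text{inj }\K_\Lambda)$ forces $\K_\Lambda = \X^\wedge = \Y^\vee$), rather than worrying about re-deriving the approximations for arbitrary objects, which is part of the cited theorem itself.
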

The notion of thick subcategory of an extriangulated category was first introduced in~\cite{nakaoka2022localization} in order to generalize the notion of localization of both exact and triangulated categories. In this paper, we complete the ``mirror" version of \cref{theo-mod} in $\K^{[-1,0]}(\proj \Lambda)$ using thick subcategories. This is done in such a way that all the bijections appearing in the previous theorems commute with each other.  

\begin{theorem*}[\textbf{\ref{cotorsion-thick}}] Let $\Lambda$ be a finite-dimensional algebra over a field, and let $\K_\Lambda =\K^{[-1,0]}(\proj \Lambda)$. There exist well defined maps
		\begin{center}
			\begin{tikzcd}
				\ctor \K_\Lambda \arrow[r, shift left=.75ex, "\beta"] & \thi \K_\Lambda \arrow[l, shift left=.75ex, "\iota"]
			\end{tikzcd}
		\end{center}
	such that when restricted to the set $\cctor \K_\Lambda$ of complete cotorsion pairs and the set $\fthi \K_\Lambda$ of thick subcategories with enough injectives, they are inverse of each other.
\end{theorem*}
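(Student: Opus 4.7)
The plan is to transport the classical correspondence between torsion pairs and wide subcategories in $\Mod \Lambda$ (the map $\alpha$ appearing after \cref{theo-mod}) to the extriangulated setting of $\K_\Lambda$. For the forward map, given a cotorsion pair $(\X,\Y)$ in $\K_\Lambda$, I would define
\[
\beta((\X,\Y)) = \bigl\{\, X \in \X \ \bigm| \ \text{every } f : X' \to X \text{ with } X' \in \X \text{ admits } \CCone(f) \in \X \,\bigr\},
\]
where $\CCone$ denotes the cocone in the extriangulated structure of $\K_\Lambda$; this is the direct analog of $\alpha(\T) = \{M \in \T \mid \forall (g : N \to M) \in \T,\ \ker(g) \in \T\}$. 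The first task is to show that $\beta((\X,\Y))$ is always thick in $\K_\Lambda$. Closure under direct summands is built into the definition, and closure under extensions and under cones and cocones of conflations follows by diagram chasing in the extriangulated category, using that $\X$ is itself closed under extensions and that $\EE(\X,\Y)=0$.

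For the backward map, given a thick subcategory $\Tt$, I would set $\iota(\Tt)$ to be the cotorsion pair whose right class is built from $\Tt$ by filtering with the canonical injective objects $\add(\Lambda[1])$ of $\K_\Lambda$ and whose left class is the corresponding $\EE$-orthogonal; concretely,
\[
\iota(\Tt) = \bigl(\,{}^{\perp_1}\!\Filt(\Tt, \add(\Lambda[1])),\ \Filt(\Tt, \add(\Lambda[1]))\,\bigr).
\]
Verifying that this yields a cotorsion pair uses the thickness of $\Tt$ together with the fact that $\K_\Lambda$ has enough projectives and enough injectives as an extriangulated category, which supplies the needed approximation conflations. No completeness is claimed yet; the restriction to $\cctor \K_\Lambda$ and $\fthi \K_\Lambda$ is imposed precisely to get it.

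For the last part, I would show the two restricted maps are mutually inverse. If $(\X,\Y) \in \cctor \K_\Lambda$, the approximation conflations $Y \to X \to Z$ and $Z \to Y' \to X'$ provided by completeness allow me to recover $\X$ and $\Y$ from $\beta((\X,\Y))$ together with the canonical projectives and injectives of $\K_\Lambda$, giving $\iota(\beta((\X,\Y))) = (\X,\Y)$. Symmetrically, if $\Tt \in \fthi \K_\Lambda$, the resolutions by injectives of $\Tt$ play the role of approximation conflations and let me identify $\beta(\iota(\Tt))$ with $\Tt$. \textbf{The main obstacle} will be showing that the two extra conditions match under the maps: that $\beta((\X,\Y))$ has enough injectives when $(\X,\Y)$ is complete, and that $\iota(\Tt)$ is complete when $\Tt$ has enough injectives. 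The cleanest route should be to pivot through the silting bijection of \cref{D}: a complete cotorsion pair corresponds to a silting object $U = U_\lambda \oplus U_\rho$ with $\X \cap \Y = \add(U)$, and $\add(U_\rho)$ should be identifiable with the injectives of $\beta((\X,\Y))$; the defining conflation~(\ref{approx-silt}) then directly yields the enough-injectives property on one side and, by the symmetric argument, completeness on the other.
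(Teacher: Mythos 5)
There is a genuine gap: both of your defining formulas point in the wrong direction, and the argument cannot be run from them. For $\beta$ you test $X \in \X$ against morphisms $f : X' \to X$ \emph{into} $X$ and ask that $\CCone(f)$ lie in $\X$. In $\K_\Lambda$ an arbitrary morphism need not admit a cocone inside $\K_\Lambda$ at all (for $X', X$ in $\K^{[-1,0]}(\proj\Lambda)$ one has $\Cone(f)[-1] \in \K^{[-1,1]}(\proj\Lambda)$, which lands back in $\K_\Lambda$ only when $f$ is a deflation), and if you restrict to deflations the condition is vacuous: the left class of a complete cotorsion pair is resolving, hence already closed under cocones of deflations between its objects, so your $\beta((\X,\Y))$ would be all of $\X$ — which is not thick in general (e.g.\ $\X = \proj\K_\Lambda$ for the pair $(\proj\K_\Lambda,\K_\Lambda)$ is not closed under cones, as $P \rightarrowtail 0 \twoheadrightarrow P[1]$ shows). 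The correct dualization of $\alpha$ goes the other way: since $\X$ is the class that behaves like "closed under subobjects", the nontrivial condition must concern quotients, i.e.\ one keeps those $X \in \X$ such that for every conflation $X \rightarrowtail X' \twoheadrightarrow X''$ with $X' \in \X$ one has $X'' \in \X$ (\cref{beta}).

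Your $\iota$ attaches the thick subcategory to the wrong side of the cotorsion pair. The thick subcategory $\beta(\X)$ sits inside the \emph{left} class $\X$, and $\X$ is recovered from it as the objects admitting an inflation into it (\cref{iota}); the right class is not generated by $\Tt$ and $\add(\Lambda[1])$ under extensions. Already for $\Lambda = \Bbbk$ your formula fails: the two complete cotorsion pairs are $(\proj\K_\Lambda, \K_\Lambda)$ and $(\K_\Lambda, \inj\K_\Lambda)$, with $\beta(\proj\K_\Lambda) = \{0\}$ and $\beta(\K_\Lambda) = \K_\Lambda$, whereas your recipe sends $\{0\}$ to $\bigl({}^{\perp_1}\!\add(\Lambda[1]), \add(\Lambda[1])\bigr) = (\K_\Lambda, \inj\K_\Lambda)$ and $\K_\Lambda$ to $(\proj\K_\Lambda, \K_\Lambda)$ — exactly the transposition — so $\beta\circ\iota$ is not the identity. (Moreover $\Filt(\Tt,\add(\Lambda[1]))$ has no reason to be closed under direct summands or to satisfy the double-orthogonality required of a cotorsion pair.) Your plan for the ``main obstacle'' — pivoting through the silting bijection, identifying $\add(U_\rho)$ with the injectives of the thick subcategory, and using the approximation conflation of $\Lambda[1]$ (together with Bongartz completion for surjectivity) — is the right one and is essentially how the paper argues (\cref{desc_beta}, \cref{lfinitethi}, \cref{betasurj}), but it cannot compensate for the two incorrect definitions.
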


\begin{theorem*}[\textbf{\ref{theo-thick-wide}}]
	Let $\Lambda$ be a finite-dimensional algebra over a field and take $\K_\Lambda$ as before. There exist inclusion-reversing maps
		\begin{center}
			\begin{tikzcd}
				\wide \Lambda \arrow[r, shift left=.75ex, "\Tt"] & \thi \K_\Lambda \arrow[l, shift left=.75ex, "\Ww"] 
			\end{tikzcd}
		\end{center}
	such that, when restricted to thick subcategories with enough injectives and the set $\fwide \Lambda$ of left finite wide subcategories, they make the following diagram commute
	\begin{center}
		\begin{tikzcd}[cramped,column sep=large, row sep=scriptsize]
			& & \silt \K_\Lambda \arrow[dr, "\thi(U_\rho)"] & & \\
			&  \cctor\K_\Lambda \arrow[ur, "\Psi"]  \arrow[rr, "\beta", crossing over]& & \fthi \K_\Lambda& \\ 
			\mathclap{\K^{[-1,0]}(\proj \Lambda)} & & & &  \\
			{} \arrow[rrrr, dashed, no head, crossing over]& & & & {} \\
			& & & & \mathclap{\Mod \Lambda} \\
			& \ftor\Lambda \arrow[from= uuuu, "\Phi", crossing over, crossing over clearance=7ex] & & \fwide\Lambda \arrow[from= ll, "\alpha"] \arrow[from= uuuu, crossing over, crossing over clearance=7ex,"\Ww"] & 
		\end{tikzcd}
	\end{center} 
	In particular, $\Ww$ and $U \in \silt \K_\Lambda \mapsto \thi(U_\rho) \in \fthi \K_\Lambda$ are bijective.
\end{theorem*}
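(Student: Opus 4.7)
The plan is to exhibit $\Ww$ and $\Tt$ by explicit formulas, check that they take values where claimed and reverse inclusions, and then deduce the bijectivity statement from the commutativity of the diagram together with the bijections furnished by Theorems~\ref{theo-mod}, \ref{silt-tilt}, \ref{C}, \ref{D} and~\ref{cotorsion-thick}.

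Concretely, for a thick subcategory $\U \in \thi \K_\Lambda$ I would set
\[
\Ww(\U) \;=\; \{\, N \in \Mod \Lambda \,:\, \Hom_{\K_\Lambda}(V, N) = 0 \text{ and } \EE_{\K_\Lambda}(V, N) = 0 \text{ for every } V \in \U \,\},
\]
where every module $N$ is viewed in $\K_\Lambda$ through any projective presentation, and the natural identification $\Hom_{\K_\Lambda}(V, N) \cong \Hom_\Lambda(H^0(V), N)$ is kept in mind. Dually, $\Tt(\W) \subseteq \K_\Lambda$ is the full subcategory of objects $V$ satisfying the same orthogonality against every $N \in \W$. Both assignments reverse inclusions tautologically. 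Well-definedness uses that $V \in \K_\Lambda$ has terms only in degrees $-1$ and $0$, so $\EE^{\geq 2}_{\K_\Lambda}(V, N) = 0$ (identifying higher $\EE$ with $\Hom_{D^b(\Mod \Lambda)}(V, N[i])$); the long exact sequences from a short exact sequence in $\Mod \Lambda$ or from a conflation in $\K_\Lambda$ therefore have at most six non-zero terms, and a direct diagram chase shows that $\Ww(\U)$ is closed under kernels, cokernels and extensions in $\Mod \Lambda$ while $\Tt(\W)$ satisfies the 2-out-of-3 property for conflations in $\K_\Lambda$.

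The main step is the commutativity of the displayed diagram on the restricted classes. Only two identities need to be checked, since the other maps are already bijections. For the top triangle $\beta = \thi(U_\rho) \circ \Psi$: given a complete cotorsion pair $(\X,\Y)$ with corresponding silting $U = U_\lambda \oplus U_\rho$ via $\Psi$ (Theorem~\ref{D}), the construction of $\beta$ in Theorem~\ref{cotorsion-thick} identifies $U_\rho$ with an injective cogenerator of $\beta(\X,\Y)$, and the thick closure of such a cogenerator recovers the whole thick subcategory. For the outer square $\Ww \circ \beta = \alpha \circ \Phi$: on the lower side, Theorem~\ref{C} gives $\Phi((\X,\Y)) = (\Fac M, M^\perp)$ and Theorem~\ref{theo-mod} gives $\alpha(\Fac M) = M_\rho^\perp \cap \Fac M$; on the upper side, $\Hom_{\K_\Lambda}(U_\rho, N) \cong \Hom_\Lambda(M_\rho, N)$ translates the $\Hom$-vanishing into $N \in M_\rho^\perp$, and the conflation~(\ref{approx-silt}) $\Lambda \to U_0 \to U_1 \dashrightarrow \Lambda[1]$ with $U_1 \in \add U_\rho$, whose cohomology is~(\ref{approx-tilt}), translates the $\EE$-vanishing into the condition that $N$ is generated by $M$, i.e. $N \in \Fac M$.

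With the diagram established, bijectivity follows automatically: the top triangle yields $U \mapsto \thi(U_\rho) = \beta \circ \Psi^{-1}(U)$, a composition of two bijections (Theorems~\ref{D} and~\ref{cotorsion-thick}), and the outer square yields $\Ww|_{\fthi \K_\Lambda} = \alpha \circ \Phi \circ \beta^{-1}$, a composition of three bijections (Theorems~\ref{theo-mod}, \ref{C} and~\ref{cotorsion-thick}). The hard part is precisely the square $\Ww \circ \beta = \alpha \circ \Phi$: one must correctly relate the extriangulated $\Hom$- and $\EE$-vanishing inside $\K_\Lambda$ to the operations $(-)^\perp$ and $\Fac$ in $\Mod \Lambda$, which requires carefully matching the silting conflation~(\ref{approx-silt}) with its module-level avatar~(\ref{approx-tilt}) and, in the direction that starts from an arbitrary thick subcategory with enough injectives rather than from a silting, translating everything through the injective cogenerator produced by Theorem~\ref{cotorsion-thick}.
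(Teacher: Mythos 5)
Your strategy is essentially the paper's: the same maps $\Ww$, $\Tt$, the top triangle handled through the identification of $\beta(\X)$ with the thick closure of its injectives $\add(U_\X)=\add(U_\rho)$ (\cref{desc_beta}), the square reduced to the known description of $\alpha(\Fac M)$ as the semistable subcategory attached to $U_\rho$, and bijectivity read off from the commutative diagram. Your reformulation of $M$-semistability as the vanishing of $\Hom_{\D^b(\Mod\Lambda)}(V,N)$ and $\Hom_{\D^b(\Mod\Lambda)}(V,N[1])$ is equivalent to the paper's definition (these two spaces are exactly the kernel and cokernel of $x^*\colon\Hom(X^0,N)\to\Hom(X^{-1},N)$), and it does make the verification that $\Tt(\W)$ is thick and $\Ww(\U)$ is wide a clean six-term long-exact-sequence chase, which is tidier than the paper's argument with explicit representatives and the Euler-form inequality for direct summands.

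Two points need repair. First, the phrase ``$N$ viewed in $\K_\Lambda$ through a projective presentation'' does not give the condition you intend: if $P_N$ denotes a $2$-term projective presentation of $N$, then $\EE_{\K_\Lambda}(V,P_N)\cong\Hom_{\D^b(\Mod\Lambda)}(V,N[1])$ does hold, but $\Hom_{\K_\Lambda}(V,P_N)$ only surjects onto $\Hom_\Lambda(H^0(V),N)$, with kernel controlled by $\Hom_{\D^b(\Mod\Lambda)}(V,H^{-1}(P_N)[1])$, which is nonzero in general; taken literally your $\Ww(\U)$ would be strictly smaller than the paper's. The orthogonality must be taken in $\D^b(\Mod\Lambda)$ against the stalk complex $N$. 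Second, in the square you assert that the $\EE$-vanishing alone translates into $N\in\Fac M$; what is actually available (via the exact sequence relating $\Hom(N,H^{-1}(\nu U_\rho))$ to $\coker(u^*)$, i.e.\ \cref{widesemistable} together with the cited lemmas of Marks--\v{S}t'ov\'\i\v{c}ek and Yurikusa) is that the conjunction of the $\Hom$- and $\EE$-vanishing equals $M_\rho^\perp\cap\Fac M=\alpha(\Fac M)$; the single condition $\EE(U_\rho,N)=0$ gives only $N\in{}^{\perp}H^{-1}(\nu U_\rho)$. With these corrections the argument goes through and coincides with the paper's proof.
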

Putting all previous theorems together we get the following result.

\begin{corollary}\label{main-coro}
	There are explicit bijections between
	\begin{enumerate}[(i)]
		\item Isomorphism classes of basic silting objects in $\K^{[-1,0]}(\proj \Lambda)$.
		\item Complete cotorsion pairs in $\K^{[-1,0]}(\proj \Lambda)$.
		\item Thick subcategories in $\K^{[-1,0]}(\proj \Lambda)$ with enough injectives.
	\end{enumerate}
These bijections are compatible with those in Theorems~\ref{theo-mod}, \ref{silt-tilt}, \ref{C}, and \ref{D}. In other words, the following diagram commutes
\begin{center}
	\begin{tikzcd}[cramped,column sep=large, row sep=scriptsize]
		& & \silt \K_\Lambda \arrow[ddddd, "H^0" {yshift= 5ex}] \arrow[dr, "\thi(U_\rho)"] & & \\
		&  \cctor\K_\Lambda \arrow[ur, "\Psi"]  \arrow[rr, "\beta" {xshift= 3ex}, crossing over]& & \fthi \K_\Lambda & \\ 
		\mathclap{\K^{[-1,0]}(\proj \Lambda)} & & & &  \\
		{} \arrow[rrrr, dashed, no head, crossing over]& & & & {} \\
		& & & & \mathclap{\Mod \Lambda} \\
		& & \tautilt\Lambda \arrow[dl, "\Fac"] \arrow[dr]& & \\
		& \ftor\Lambda \arrow[from= uuuuu, "\Phi", crossing over, crossing over clearance=7ex] & & \fwide\Lambda \arrow[from= ll, "\alpha"] \arrow[from= uuuuu, crossing over, crossing over clearance=7ex,"\Ww"] & 
	\end{tikzcd}
\end{center}
\end{corollary}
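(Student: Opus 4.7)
The corollary is a synthesis of Theorems~\ref{theo-mod}, \ref{silt-tilt}, \ref{C}, \ref{D}, \ref{cotorsion-thick}, and~\ref{theo-thick-wide}; my plan is to first collect the bijections among (i), (ii), (iii), and then verify the diagram face by face, isolating the single new compatibility that still requires checking.

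The three-way bijection among (i), (ii), (iii) is immediate: Theorem~\ref{D} gives (i)$\leftrightarrow$(ii) via $\Psi$, Theorem~\ref{cotorsion-thick} gives (ii)$\leftrightarrow$(iii) via $\beta$ and $\iota$, and Theorem~\ref{theo-thick-wide} identifies the composite $\beta \circ \Psi^{-1}$ with the explicit assignment $U \mapsto \thi(U_\rho)$. For the diagram, the upper ``prism'' with vertices $\silt \K_\Lambda$, $\cctor \K_\Lambda$, $\fthi \K_\Lambda$, $\ftor \Lambda$, $\fwide \Lambda$ (and edges $\Psi$, $\beta$, $\thi(U_\rho)$, $\Phi$, $\alpha$, $\Ww$) commutes by Theorem~\ref{theo-thick-wide}. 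The lower triangle $\tautilt \Lambda \to \ftor \Lambda \to \fwide \Lambda$ commutes by Theorem~\ref{theo-mod}, because the diagonal $\tautilt \Lambda \to \fwide \Lambda$ is literally defined there as $M \mapsto M_\rho^\perp \cap \Fac(M) = \alpha(\Fac(M))$. The only content genuinely new to the corollary is therefore the vertical map $H^0 \colon \silt \K_\Lambda \to \tautilt \Lambda$ from Theorem~\ref{silt-tilt}, and the entire diagram reduces to the single identity
\[
    \Phi \;=\; \Fac \circ H^0 \circ \Psi \colon \cctor \K_\Lambda \longrightarrow \ftor \Lambda.
\]

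To prove this identity, take $(\X, \Y) \in \cctor \K_\Lambda$, set $U = \Psi(\X, \Y)$, and let $M = H^0(U)$. By Theorem~\ref{D}, $\X \cap \Y = \add(U)$ and $U$ generates $\X$ in the sense that every $X \in \X$ fits into a conflation $X' \to U^n \to X \dashrightarrow$ with $X' \in \X$. By Theorem~\ref{silt-tilt}, $M$ is the support $\tau$-tilting module associated to $U$. The explicit description of $\Phi$ in~\cite{pauksztello2020co} identifies the torsion class of $\Phi(\X, \Y)$ with $H^0(\X)$, so the identity reduces to showing $H^0(\X) = \Fac(M)$. Applying $H^0$ to the conflation above yields a surjection $M^n \twoheadrightarrow H^0(X)$, giving $H^0(X) \in \Fac(M)$. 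Conversely, any surjection $M^n \twoheadrightarrow N$ lifts, via the projective presentations packaged in $U$, to a map $U^n \to X$ in $\K_\Lambda$ with $X \in \X$ and $H^0(X) = N$, giving the reverse inclusion.

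The main obstacle is the last step: one must carefully handle the summand $U_\rho$ (whose $H^0$ can vanish) when producing the required conflations, and verify that the lift of a surjection $M^n \twoheadrightarrow N$ actually lands in $\X$. This is where the precise form of $\X$ inside $\K_\Lambda$ coming from the proof of Theorem~\ref{D} is essential. Once this compatibility is in hand, all remaining faces of the diagram commute by pasting with Theorems~\ref{theo-mod} and~\ref{theo-thick-wide}, and the corollary follows.
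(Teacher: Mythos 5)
Your overall decomposition is the right one and matches the paper's intent: the corollary is assembled from \cref{cotorsion-thick}, \cref{theo-thick-wide} and the cited theorems, and the only square not already covered by those is the one involving $H^0$, $\Psi$, $\Phi$ and $\vartheta = (\Fac, -)$. That square, however, is not new to the corollary: it is exactly \cref{cotorsion_torsion}, which the paper proves in Section 2, so the intended proof is pure pasting. The problem is that your attempted re-proof of this square contains a genuine error. The torsion class of $\Phi(\X,\Y)$ is $H^0(\Y)$, not $H^0(\X)$ (see \cref{cotorsiontorsion}: $\Phi((\X,\Y)) = (H^0(\Y), H^0(\Y)^{\perp})$), so the identity you reduce to, $H^0(\X) = \Fac(M)$ with $M = H^0(\Psi(\X,\Y))$, is the wrong one — and it is in fact false in general. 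Since $\X$ always contains the projective objects of $\K_\Lambda$, in particular the stalk complex $\Lambda$ in degree $0$, one always has $\Lambda \in H^0(\X)$; but $\Lambda \in \Fac(M)$ forces $\Fac(M) = \Mod\Lambda$, which fails for any support $\tau$-tilting module that is not a generator. Relatedly, the structural description you invoke for $\X$ (``every $X \in \X$ fits into a conflation $X' \rightarrowtail U^n \twoheadrightarrow X$ with $X' \in \X$'') is actually the description of $\Y = \add(U)^{\wedge}$; for $\X$ one has the dual statement $\X = \add(U)^{\vee}$ (cocones, cf. \cref{ccone}), under which $H^0(X)$ is generally not a quotient of a module in $\add(M)$.

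The correct reduction is $H^0(\Y) = \Fac(H^0(U))$, and its proof is essentially the argument you sketched but applied to the right component: since $U \in \X \cap \Y \subset \Y$ and $\Y = \add(U)^{\wedge}$, every $Y \in \Y$ sits in a conflation $Y' \rightarrowtail U' \twoheadrightarrow Y$ with $U' \in \add(U)$, and applying $H^*$ (using $H^1(Y') = 0$) gives a surjection $H^0(U') \twoheadrightarrow H^0(Y)$, hence $H^0(\Y) \subset \Fac(H^0(U))$; the reverse inclusion holds because $H^0(\Y)$ is a torsion class containing $H^0(U)$. This is precisely the content of \cref{cotorsion_torsion}. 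With that square replaced by the correct one, the rest of your pasting argument (the commutativity of the prism via \cref{theo-thick-wide} and of the lower triangle via \cref{theo-mod}) goes through as you describe.
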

Our work is motivated by the possible extension of the theory of stability conditions to $\K^{[-1,0]}(\proj \Lambda)$. 
We use as inspiration the description of certain wide subcategories of $\Mod \Lambda$ as subcategories of semistable modules for a particular collection of stability conditions \cite{schofield1991semi, King, yurikusa2018wide, brustle2019wall} for the introduction of the notion of $M$-stability for two-term complexes with $M \in \Mod \Lambda$ (see \cref{defMsemistab1} and \ref{defMsemistab2}). In \cref{s_geometry} we compare this representation theoretical notion of semistability with the one arising from Geometric Invariant Theory (\cref{geometricss1}, \cref{gemetricss2}), and a numerical one based on King's semistability for modules (\cref{numericalss}). We show that both geometric and $M$-stability imply numerical semistability (see \cref{naive} and \ref{Mimplies[M]}), but that the converse does not hold in general. This result, together with \cref{main-coro}, and \cref{wide-thick}, strongly suggests that $M$-stability is a notion worth of further study. 
\section*{Acknowledgments}
I am grateful to P.-G. Plamondon for the many constructive discussions, his availability, and his support throughout the development of my Ph.D. thesis, from which this article emanates. I would also like to thank Yann Palu for his helpful comments, in particular about \cref{lfinitethi}. I grateful to Calin Chindris, Charles Paquette and an anonymous referee for their comments and suggestions for improving \cref{s_geometry}. Lastly, I thank the ISM Discovery School on Mutations organizers, where I acquired crucial tools for the progress of this work. 

 
\section{Preliminaries}\label{preliminaries}
We fix $\Bbbk$ an algebraically closed field of characteristic zero. Let $\Lambda$ be a finite-dimensional $\Bbbk$-algebra. We will mostly consider the case where $\Lambda \cong \Bbbk Q/ I$ with $Q = (Q_0, Q_1)$ a finite quiver and $I$ an admissible ideal of the path algebra $\Bbbk Q$. We write $S_i$ for the $i$-th simple module associated to a vertex $i \in Q_0$ with corresponding projective cover $P_i \twoheadrightarrow S_i$. Recall that we can associate to $\Lambda$ the triangulated category $\D^b(\Mod \Lambda)$ of bounded complexes of finite-dimensional modules, where $X[1]$ will denote the shift of any $X \in \mathcal{D}^b(\Mod\Lambda)$. We consider as well the category of bounded complexes of projective modules $\K^b(\Mod \Lambda)$, and we will denote by $\K^{[a,b]} (\proj \Lambda)$ with $a \leq b \in \mathbb{Z}$ the extension-closed subcategory of $\K^b(\Mod \Lambda)$ of complexes concentrated between degrees $a$ and $b$. In particular, we will study the category $\K_\Lambda = \mathcal{K}^{[-1,0]}(\proj\Lambda)$, which we view as the category of morphisms between projective modules up to homotopy. 

Let $n = |Q_0|$. Denote by $K_0(\Mod \Lambda) \cong K_0(\D^b(\Mod \Lambda))$ the Grothendieck group of $\Mod\Lambda$, which is canonically isomorphic to the lattice $\bigoplus_{i = 1}^n \mathbb{Z} [S_i]$. Similarly, we consider its dual $K_0(\proj\Lambda) \simeq K_0(\K^b(\proj \Lambda)) \cong K_0(\K_\Lambda) \cong \bigoplus_{i = 1}^n \mathbb{Z} [P_i]$. The Euler form associated to these two groups is given by 
\begin{align*}
	\langle -, - \rangle : K_0(\proj\Lambda) \times K_0(\Mod\Lambda)  & \longrightarrow \mathbb{Z} \\
	([P_j], [S_i]) \mapsto \langle  [P_j], [S_i] \rangle & = \begin{cases}
		1 & i = j \\
		0 &  i \neq j \ .
	\end{cases}
\end{align*}
In particular, for every $M \in \Mod\Lambda$ and $X =$ \begin{tikzcd}[cramped, sep=small]X^{-1} \arrow[d, "x"] \\ X^0 \end{tikzcd} $\in \K_\Lambda$, this pairing is given by \begin{gather*}\langle [X], [M] \rangle = \langle  [X^0]-[X^{-1}], [M] \rangle = \dim_\Bbbk(\Hom_{\Lambda}(X^0, M)) - \dim_\Bbbk(\Hom_{\Lambda}(X^{-1}, M)). \end{gather*}

\subsection{Extriangulated categories}
Extriangulated categories were introduced by Nakaoka and Palu in \cite{nakaoka2019extriangulated} as a way to generalize both triangulated and exact categories. Since $\K_\Lambda = \mathcal{K}^{[-1,0]}(\proj \Lambda)$ is an extension-closed subcategory of  the triangulated category $\mathcal{D}^b(\Mod\Lambda)$, then it is extriangulated. In this setting, the bifunctor associated to $\K_\Lambda$ that gives its extriangulated structure is given by $\mathbb{E}_{\K_\Lambda}(X,Y) = \Hom_{\mathcal{D}^b(\Mod\Lambda)}(X,Y[1])$. A \textbf{conflation} in an extriangulated category is the generalization of what would be a triangle in a triangulated category, or an exact sequence in an exact category. We restrict this notion to our setting.

\begin{definition}
	A sequence of morphisms \begin{tikzcd}[cramped, sep=small] 
		X \arrow[r, "f", tail] & Y \arrow[r, "g", twoheadrightarrow] & Z 
	\end{tikzcd} in $\K_\Lambda$ is a \textbf{conflation} if there exists a map $h : Z \rightarrow X[1]$ such that $(f,g,h)$ is a triangle in $\mathcal{D}^b(\Mod \Lambda)$. In this scenario, $f$ is said to be an \textbf{inflation} and $g$ a \textbf{deflation}.
\end{definition}

\begin{remark}\label{definf}
Recall that the category $\C_\Lambda = \C^{[-1,0]} (\proj \Lambda)$ of morphism between projective modules is exact, thus, extriangulated. Indeed, its conflations are given by those sequences \begin{tikzcd}[cramped, sep=small]
		A \arrow[r, "u", tail] & B \arrow[r, "v", twoheadrightarrow] & C 
	\end{tikzcd} of objects in $\C_\Lambda$ such that the sequences $0 \rightarrow A^{i} \xrightarrow{u^{i}} B^i \xrightarrow{v^i} C^i \rightarrow 0$ are exact in $\Mod \Lambda$ for $i = -1,0$. Let \begin{tikzcd}[cramped, sep=small] 
 	X \arrow[r, "f", tail] & Y \arrow[r, "g", twoheadrightarrow] & Z 
 \end{tikzcd} be a conflation in $\K_\Lambda$ and fix representatives $x, y, z$ of the differentials of $X, Y, Z$ respectively. Chose as well $h : Z^{-1} \rightarrow X^0$, a representative of the morphism $Z \dashrightarrow X[1]$ associated to our conflation. Then we must have an isomorphism in $\K_\Lambda$ $$Y \cong \CCone(Z^{-1} \dashrightarrow X[1])[-1] \cong \text{\begin{tikzcd}[cramped]  X^{-1} \oplus Z^{-1} \arrow[d, "{\begin{psmallmatrix} x & h \\ 0 & z \end{psmallmatrix}}"]\\X^0\oplus Z^0  \end{tikzcd}}.$$
 
Since $\K_\Lambda$ is equivalent to the (extriangulated) quotient (see \cite{nakaoka2022localization}) $\C_\Lambda/\{ P \xrightarrow{f} Q \ | \  f \text{ isomorphism} \}$, if we choose a minimal representative $y$  of $Y$, that is, such that it satisfies that $y \ncong \begin{pmatrix}
		y' & 0 \\ 0 & \Id_Q
	\end{pmatrix}$ for all $0 \neq Q  \in \proj \Lambda$, then there exists $P \in \proj \Lambda$ and a diagram
	\begin{center}
		\begin{tikzcd}[ampersand replacement=\&, row sep = large]
			Y^{-1} \oplus P \arrow[r, "\simeq"] \arrow[d, "{\begin{pmatrix} y & 0 \\ 0 & \Id_P \end{pmatrix}}"] \& X^{-1}\oplus Z^{-1} \arrow[d, "{\begin{pmatrix} x & h \\ 0 & z \end{pmatrix}}"] \\
			Y^0 \oplus P \arrow[r, "\simeq"] \&  X^{0}\oplus Z^{0} \end{tikzcd}
	\end{center}
	that is commutative inside $\Mod \Lambda$. That is, the obtained sequence $X \rightarrowtail Y \oplus \text{\begin{tikzcd}[cramped, sep=tiny]  P \arrow[d, equal]\\ P \end{tikzcd}} \twoheadrightarrow Z$ is a conflation inside of $\C_\Lambda$, which represents the conflation $(f,g)$ in $\K_\Lambda$.
\end{remark}

Besides being Krull-Schmidt and $\Hom$-finite, the category $\K_\Lambda$ satisfies 4 key properties:

\begin{enumerate}[(a)]
	\item $\K_\Lambda$ is \textbf{hereditary}, that is $\EE^i(X,Y) = \Hom_{\K_\Lambda}(X,Y[i]) = 0$ $\forall i \geq 2$ and all $X, Y \in \K^{[-1,0]}(\proj \Lambda)$.
	\item It has \textbf{enough projectives}, which are given by the complexes of the form $P : = $\begin{tikzcd}[cramped, sep=small] 
		0 \arrow[d] \\P \end{tikzcd} where $P \in \proj \Lambda$.
	\item It has \textbf{enough injectives}, which are all objects isomorphic to complexes $P[1] := $\begin{tikzcd}[cramped, sep=small] 
		P \arrow[d] \\ 0 \end{tikzcd} where $P \in \proj \Lambda$.
	\item It satisfies \textbf{WIC}: If $h= f \circ g$ is an inflation, then so is $g$. Dually, if $h$ is a deflation, then so is $f$. 
\end{enumerate}

\subsection{Cotorsion pairs and $\tau$-tilting theory}
In this section we recall some results and tools used in the study of cotorsion pairs and silting subcategories in a general extriangulated category $\K$. Most of these results first appeared in the context of extriangulated categories in~\cite{adachi2022hereditary}. We will often apply these results to the category $\K_\Lambda$. From now on, we suppose that all our subcategories are full and stable under isomorphisms. 

\begin{definition}
	Let $\K$ be an extriangulated category and $\X$ and $\Y$ two subcategories of $\K$. We denote by $\Cone(\X, \Y)$ the full subcategory whose objects are those $Z$ such that there exists a conflation $X \rightarrowtail Y \twoheadrightarrow Z$, where $X \in \X$ and $Y \in \Y$. Dually, we say that $ Z \in \CCone(\X, \Y)$ if there exists a conflation $Z \rightarrowtail X \twoheadrightarrow Y$ with $X \in \X$ and $Y \in \Y$. We define as well
	\begin{enumerate}
		\item $\X^{\wedge}_{-1} = 0$, $\X^{\wedge}_m = \Cone(\X^{\wedge}_{m-1}, \X)$  $\forall m \in \mathbb{Z}_{\geq 0}$, and $\X^{\wedge} = \bigcup_{m \in \mathbb{Z}_{\geq 0}} \X^{\wedge}_m$.
		\item $\X^{\vee}_{-1} = 0$, $\X^{\vee}_m = \CCone(\X, \X^{\vee}_{m-1})$ $\forall m \in \mathbb{Z}_{\geq 0}$, and $\X^{\vee} = \bigcup_{m \in \mathbb{Z}_{\geq 0}} \X^{\vee}_m$.
	\end{enumerate}
\end{definition}

\begin{definition}\cite{nakaoka2022localization}
	Let $\mathcal{K}$ be an extriangulated category. We say that a subcategory $\mathcal{T} \subset \K$ is \textbf{thick}, if it is closed under direct summands and if for every conflation 
	$X \rightarrowtail Y \twoheadrightarrow Z$
	if two of the terms lie in $\mathcal{K}$, then the third does as well. That is, $\K$ is closed under extensions, cones and cocones. For all $\mathcal{C} \subset \K$, we denote by $\thi(\mathcal{C})$ the smallest thick subcategory that contains $\mathcal{C}$, and we write $\thi \K$ for the set of thick subcategories of $\K$. 
\end{definition}

\begin{definition}
	Let $\K$ be an extriangulated category. We say that a subcategory $\U\subset \K$ is \textbf{presilting} if it is closed under direct sums and summands and $\EE^i(\U, \U) = 0$ for all $i > 0$. We say that $\U$ is \textbf{silting} if $\thi(\U) = \K$. An object $U \in \K$ is (pre)silting if the category $\add(U)$ is. We denote by $\silt \K$ the set of isomorphism classes of basic silting objects in $\K$. 
\end{definition}

\begin{proposition}\cite[Lemma 5.3]{adachi2022hereditary}\label{uincv}
	Let $\mathcal{V} \subset \K$ be a silting subcategory. If $\U$ is a presilting subcategory with $\mathcal{V}  \subset \U$, then $\U = \mathcal{V}$.
\end{proposition}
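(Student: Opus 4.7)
The plan is to show that every $U \in \U$ must already lie in $\mathcal{V}$, so that together with the given inclusion $\mathcal{V}\subseteq \U$ we get equality. Pick $U \in \U$.

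The first step is to produce a one-step right $\mathcal{V}$-resolution of $U$, i.e.\ a conflation
\[
V_1 \rightarrowtail V_0 \twoheadrightarrow U
\]
with $V_0, V_1 \in \mathcal{V}$. The existence of such a conflation should follow from the silting hypothesis $\thi(\mathcal{V}) = \K$ combined with the hereditary property of $\K_\Lambda$ and the existence of enough projectives and injectives: one starts from a right $\mathcal{V}$-approximation $V_0 \twoheadrightarrow U$, completes it to a conflation $V_1 \rightarrowtail V_0 \twoheadrightarrow U$ using the deflation axiom, and then shows $V_1 \in \mathcal{V}$ by using that $\EE^{i}(-,\mathcal{V})$ vanishes on $U$ for $i\geq 2$ (hereditary) together with the presilting property of $\mathcal{V}$. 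Alternatively, one may induct on the level $n$ in the filtration $\mathcal{V} = \mathcal{V}_0 \subseteq \mathcal{V}_1 \subseteq \cdots$ whose union exhausts $\thi(\mathcal{V})$, collapsing the length of the resolution at each stage via the hereditary assumption.

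Once such a conflation is in hand, I apply the cohomological functor $\Hom_\K(U,-)$ to obtain the exact sequence
\[
\Hom_\K(U,V_0) \longrightarrow \Hom_\K(U,U) \longrightarrow \EE^{1}(U,V_1).
\]
Because $\mathcal{V}\subseteq \U$ and $\U$ is presilting, $\EE^{1}(U,V_1)=0$. Hence $\operatorname{id}_U$ lifts to a morphism $U \to V_0$ splitting the deflation $V_0 \twoheadrightarrow U$. Therefore $U$ is a direct summand of $V_0 \in \mathcal{V}$, and since silting subcategories (being closed under direct sums and summands by definition) contain every summand of their objects, $U \in \mathcal{V}$, as required.

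The principal obstacle is the first step, namely producing the one-step $\mathcal{V}$-resolution. The splitting argument of the second step is then formal, but without control on the length of the $\mathcal{V}$-resolution of $U$ one cannot directly conclude that $\EE^{1}(U,V_1) = 0$ implies the splitting of a deflation onto $U$. The hereditary property of $\K_\Lambda$, which guarantees $\EE^{i}=0$ for $i\geq 2$, is therefore the key hypothesis that allows the inductive collapse of a general finite resolution into a single conflation; this is presumably the content of the corresponding lemma in~\cite{adachi2022hereditary}.
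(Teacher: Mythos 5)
The paper does not actually prove this proposition: it is imported verbatim from \cite[Lemma 5.3]{adachi2022hereditary}, so there is no in-paper argument to compare against. Judging your proposal on its own terms: the second half (applying $\Hom_\K(U,-)$ to a conflation $V_1\rightarrowtail V_0\twoheadrightarrow U$, using $\EE(U,V_1)=0$ to lift $\operatorname{id}_U$ and split the deflation, then invoking closure of $\mathcal{V}$ under summands) is correct and is exactly the standard endgame. The problem is the first half, and you have correctly identified it as the principal obstacle but not filled it. Two things go wrong. First, the statement is made for an arbitrary extriangulated category $\K$ (this is the setting of the section in which it appears), so hereditarity, enough projectives/injectives, and functorial finiteness of $\mathcal{V}$ are not available; an argument that leans on special features of $\K_\Lambda$ does not prove the proposition as stated. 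Second, and more seriously, even granting a deflation right $\mathcal{V}$-approximation $V_0\twoheadrightarrow U$, your claim that its cocone $V_1$ lies in $\mathcal{V}$ is unsupported: knowing that $\EE^{i}(V_1,\mathcal{V})=\EE^{i}(\mathcal{V},V_1)=0$ for $i>0$ would only tell you that $\add(\mathcal{V}\cup\{V_1\})$ is presilting and contains $\mathcal{V}$ --- concluding $V_1\in\mathcal{V}$ from that is precisely the proposition you are trying to prove, so the argument is circular at this point.

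The way out is to abandon the insistence on a length-one resolution with syzygy in $\mathcal{V}$. Since $\thi(\mathcal{V})=\K$ and $\mathcal{V}$ is presilting, one has $\K=(\mathcal{V}^{\vee})^{\wedge}$ (this is the actual content of the relevant results in \cite{adachi2022hereditary}, of which \cref{vee-wedge-desc} is the special case where $\mathcal{V}$ is closed under cones), so $U$ admits a \emph{finite} $\mathcal{V}$-resolution with syzygies $U_1, U_2,\dots$ that need not lie in $\mathcal{V}$; only the last term does. One then dimension-shifts: each conflation $U_{k+1}\rightarrowtail V_k\twoheadrightarrow U_k$ gives an exact sequence $\EE^{k}(U,V_k)\to\EE^{k}(U,U_k)\to\EE^{k+1}(U,U_{k+1})$ whose left term vanishes because $V_k\in\mathcal{V}\subseteq\U$, yielding injections $\EE^{1}(U,U_1)\hookrightarrow\EE^{2}(U,U_2)\hookrightarrow\cdots\hookrightarrow\EE^{m}(U,V)=0$ with $V\in\mathcal{V}$. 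This gives $\EE^{1}(U,U_1)=0$ directly, after which your splitting argument applies verbatim to the top conflation $U_1\rightarrowtail V_0\twoheadrightarrow U$. Note that this route uses only that $\mathcal{V}\subseteq\U$ and $\U$ is presilting; it needs neither hereditarity nor any collapsing of the resolution.
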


\begin{proposition}~\cite[Proposition 5.4]{adachi2022hereditary}\label{siltinggenerator}
	Let $\K$ be an extriangulated category that contains a silting object. Then each silting category admits an additive generator. Moreover, if $\K$ is a Krull-Schmidt category, then $U \mapsto \add U$ gives a bijection between $\silt \K$ and the set of silting subcategories. 
\end{proposition}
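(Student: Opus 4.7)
The plan is to deduce the statement from \cref{uincv} by showing that every silting subcategory $\U$ coincides with $\add U$ for a single object $U\in\U$, and then to use standard Krull-Schmidt bookkeeping for the bijection. The extra input needed to upgrade \cref{uincv} to this existence statement is precisely the hypothesis that a silting object $T$ exists in $\K$.

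More precisely, since $\U$ is silting we have $T\in\thi(\U)$. The thick closure can be described as the union $\bigcup_{m\geq 0}\U_m$, where $\U_0=\U$ and $\U_{m+1}$ is obtained from $\U_m$ by closing under extensions, cones, cocones and direct summands. Since each of these operations takes in only finitely many inputs, tracing back the construction of $T$ produces a finite list of objects $U_1,\dots,U_n\in\U$ with $T\in\thi(\add(U_1\oplus\cdots\oplus U_n))$. Setting $U=U_1\oplus\cdots\oplus U_n$, the subcategory $\add U$ is contained in $\U$ hence presilting, and one has
\[
\thi(\add U)\supseteq \thi(T)=\thi(\add T)=\K,
\]
so $\add U$ is silting. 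Applying \cref{uincv} with the silting subcategory $\add U$ and the presilting subcategory $\U\supseteq\add U$ forces $\U=\add U$, which is the desired additive generator.

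For the second part, assume $\K$ is Krull-Schmidt. The assignment $U\mapsto\add U$ from basic silting objects to silting subcategories is well defined and surjective by the first part. Injectivity is Krull-Schmidt uniqueness: if $U$ and $V$ are basic silting objects with $\add U=\add V$, then they have exactly the same indecomposable summands up to isomorphism (each occurring with multiplicity one), so $U\cong V$.

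The delicate point is the ``compactness'' step, that is, the assertion that the construction of $T$ in $\thi(\U)$ uses only finitely many objects of $\U$. This requires the thick closure to be defined layer by layer via finitary operations rather than as the intersection of all thick subcategories containing $\U$ (for which the argument would have to be reworked). Once this finiteness is granted, the proof is essentially a direct application of \cref{uincv}.
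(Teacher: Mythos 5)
Your argument is correct. Note that the paper itself gives no proof of \cref{siltinggenerator}: it is quoted verbatim from \cite[Proposition~5.4]{adachi2022hereditary}, so there is nothing internal to compare against; your proof is essentially the standard one from that reference (and from the triangulated-category antecedent in Aihara--Iyama). The compactness step you flag is indeed the only point requiring care, and your layer-by-layer description of $\thi(\U)$ as an increasing union closed under the finitary operations (extensions, cones, cocones, summands) justifies it: any $T\in\thi(\U)$ lies in $\thi(\add(U_1\oplus\cdots\oplus U_n))$ for finitely many $U_i\in\U$, after which \cref{uincv} applied to the silting subcategory $\add U\subseteq\U$ finishes the first claim, and Krull--Schmidt uniqueness of indecomposable summands gives the bijection.
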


\begin{proposition}\cite[Proposition 4.10]{adachi2022hereditary}\label{vee-wedge-desc}
	Let $\U$ be a presilting subcategory of $\K$. Then the following statements hold.
	\begin{enumerate}
		\item $\U^\vee$ is the smallest subcategory containing $U$ and closed under extensions, cocones and direct summands. Moreover, if $\U$ is closed under cones, then $\U^\vee = \thi(\U)$.
		\item $\U^\wedge$ is the smallest subcategory containing $U$ and closed under extensions, cones and direct summands. Moreover, if $\U$ is closed under cocones, then $\U^\wedge = \thi(\U)$.
	\end{enumerate}
\end{proposition}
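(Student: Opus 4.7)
The plan is to prove part (1) in detail and deduce part (2) by the dual argument. Write $\mathcal{S}$ for the smallest subcategory of $\K$ containing $\U$ that is closed under extensions, cocones, and direct summands; the goal is to show $\U^\vee = \mathcal{S}$, and then to upgrade this to $\U^\vee = \thi(\U)$ under the extra hypothesis that $\U$ is closed under cones.

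The inclusion $\U^\vee \subset \mathcal{S}$ is the easy direction and I would handle it by induction on the filtration index $m$. The base case $\U^\vee_{-1} = 0 \subset \mathcal{S}$ is trivial; if $Z \in \U^\vee_m$, then by definition there is a conflation $Z \rightarrowtail U \twoheadrightarrow V$ with $U \in \U \subset \mathcal{S}$ and $V \in \U^\vee_{m-1} \subset \mathcal{S}$ (induction), so $Z \in \mathcal{S}$ by closure under cocones.

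For the reverse inclusion $\mathcal{S} \subset \U^\vee$, I note first that $\U \subset \U^\vee_0 \subset \U^\vee$ via the trivial conflation $U \rightarrowtail U \twoheadrightarrow 0$, so it suffices to verify that $\U^\vee$ itself is closed under extensions, cocones, and direct summands. Each of these three closures I would prove by induction on the filtration indices $m, n$ of the relevant objects $X \in \U^\vee_m$, $Z \in \U^\vee_n$, and by combining the given filtrations $X \rightarrowtail U_X \twoheadrightarrow V_X$ and $Z \rightarrowtail U_Z \twoheadrightarrow V_Z$ via the extriangulated octahedral axiom (or $(3 \times 3)$-lemma) to produce new conflations of smaller filtration index. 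The presilting hypothesis $\EE^{>0}(\U, \U) = 0$ is used at each step to ensure that the obstruction classes vanish, so that the relevant pushout/pullback squares behave as if $\U$ were projective in an appropriate sense. For the "moreover" statement, once one additionally verifies that $\U^\vee$ is closed under cones (which uses the extra assumption that $\U$ is), the subcategory $\U^\vee$ is thick and contains $\U$, hence contains $\thi(\U)$; conversely $\thi(\U)$ satisfies all three closure properties defining $\mathcal{S}$, so by minimality $\U^\vee = \mathcal{S} \subset \thi(\U)$, giving equality. Part (2) is the strict formal dual obtained by swapping cones/cocones, inflations/deflations, and $\U^\wedge$/$\U^\vee$.

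The main obstacle I anticipate is closure under direct summands: given $A \oplus B \in \U^\vee_m$ and a defining conflation $A \oplus B \rightarrowtail U \twoheadrightarrow V$ with $U \in \U$ and $V \in \U^\vee_{m-1}$, one must split this into compatible conflations $A \rightarrowtail U_A \twoheadrightarrow V_A$ and $B \rightarrowtail U_B \twoheadrightarrow V_B$ of the same type so that the inductive hypothesis can be applied to $V_A, V_B$. This is the standard hard step in results of "filtration equals smallest closed subcategory" flavour; it requires decomposing $U$ along the idempotent $A \oplus B \to A \oplus B$, lifting this to a compatible decomposition of $U$ through the inflation, and controlling the cofiber $V$ using the vanishing $\EE^1(\U, \U) = 0$ to guarantee that the splittings assemble into genuine conflations. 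Everything else is, in comparison, routine diagram chasing.
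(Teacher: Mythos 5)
First, a remark on the comparison itself: the paper does not prove this statement — it is quoted verbatim from \cite[Proposition 4.10]{adachi2022hereditary} — so there is no in-paper proof to measure you against. Your overall architecture (two inclusions, with all the content concentrated in showing that $\U^\vee$ is closed under extensions, cocones and direct summands, the last being the hard one) does match the standard proof, and your easy direction $\U^\vee\subset\mathcal{S}$ is correct as written.

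There are, however, genuine gaps in the hard direction. The most serious is your strategy for closure under direct summands: you propose to decompose $U$ along the idempotent of $A\oplus B$ ``through the inflation,'' i.e., to split the defining conflation $A\oplus B\rightarrowtail U\twoheadrightarrow V$ into compatible conflations $A\rightarrowtail U_A\twoheadrightarrow V_A$ and $B\rightarrowtail U_B\twoheadrightarrow V_B$. This cannot work: the inflation need not respect any direct sum decomposition of $U$ (for instance $U$ may be indecomposable while both $A$ and $B$ are nonzero), so there is in general nothing to lift. The known argument (Aihara--Iyama style, adapted to the extriangulated setting in \cite{adachi2022hereditary}) never splits $U$; one first establishes the auxiliary vanishing $\EE^{i}(\U^\vee,\U)=0$ for all $i\geq 1$ by induction on coresolution length, deduces from it that the inflation in any defining conflation of $Z\in\U^\vee_m$ is a left $\U$-approximation of $Z$, and then runs a splicing induction on $m$ in which the complementary summand keeps reappearing in the successive cones and is only eliminated at the final step, where presiltingness forces the last conflation to split. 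That auxiliary vanishing is also what your other steps tacitly rely on: the obstruction classes in the horseshoe-type constructions for closure under extensions and cocones live in groups of the form $\EE(V,U')$ with $V\in\U^\vee$ and $U'\in\U$, not in $\EE(\U,\U)$, so the hypothesis $\EE^{>0}(\U,\U)=0$ does not kill them directly — you need the intermediate lemma (whose proof uses the full strength of $\EE^{i}(\U,\U)=0$ for all $i>0$, not just $i=1$). Finally, in the ``moreover'' part you assert that $\U^\vee$ is closed under cones whenever $\U$ is; that is precisely the nontrivial content of the implication and requires an argument (again via the approximation property of the defining inflations), not just the observation that thickness would then follow.
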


\begin{definition}\cite[Definition 1.7]{pauksztello2020co}
	Let $\K$ be an extriangulated category. We say that a pair of subcategories $(\X, \Y)$ is a \textbf{cotorsion pair} if they are both full and additive and they satisfy 
	\begin{enumerate}
		\item $\EE(X, \Y) = 0$ if and only if $X \in \X$.
		\item  $\EE(\X, Y) = 0$ if and only if $Y \in \Y$.
	\end{enumerate}
In other words $\Y = \X^{\perp_1} = \{Y \in \K \ | \ \EE(X,Y) = 0 \ \forall \ X\in \X\}$ and $\X = {}^{\perp_1}\Y = \{X \in \K \ | \ \EE(X,Y) = 0 \ \forall \ Y\in \Y\}$. We denote by $\ctor\K$ the set of all cotorsion pairs in $\K$. We say that $(\X, \Y)$ is \textbf{complete} \cite[Definition 4.1]{nakaoka2019extriangulated}, if additionally $\K= \Cone(\Y,\X) = \CCone(\Y, \X)$. We denote by $\cctor \K \subset \ctor \K$ the subset of complete cotorsion pairs of $\K$. 
\end{definition}

\begin{remark}
	As we have noted before, when $\K = \K_\Lambda$ it is always true that $\EE^2(X,Y) = 0$ for all $X, Y \in \K$. In particular, $\EE^2(\X, \Y) = 0$ for all cotorsion pairs $(\X,\Y)$. We say that a cotorsion pair is \textbf{hereditary} \cite[Definition 4.1]{liu2020hereditary} when it satisfies this property. We remark as well that all projective objects must lie in $\X$, all injective objects belong to $\Y$ and since $\K_\Lambda = \Cone (\proj \K_\Lambda, \proj \K_\Lambda) = \CCone (\text{inj } \K_\Lambda, \text{inj }\K_\Lambda)$ we have that $\K_\Lambda= \X^{\wedge} = \Y^{\vee}$. In a general extriangulated category $\K$, we say that $(\X, \Y)$ is \textbf{bounded} \cite{adachi2022hereditary} if it satisfies that $\K= \X^{\wedge} = \Y^{\vee}$.
\end{remark}

Recall that a \textbf{torsion pair} in $\Mod \Lambda$ is a pair of subcategories $(\T, \F)$ closed under extensions such that 
\begin{enumerate}
	\item $\T$ is closed under factor modules.
	\item $\F$ is closed under submodules.
	\item $\F = \T^\perp$ and $\T = {}^\perp\F$. 
\end{enumerate}
We say that a torsion pair is \textbf{functorially finite} if 
every $\Lambda$-module admits both a right and a left $\T$-approximation. We denote by $\ftor \Lambda \subset \tor \Lambda$ the subset of functorially finite torsion classes in $\Mod \Lambda$. 

\begin{theorem}\cite[Theorem 3.6]{pauksztello2020co}\label{cotorsiontorsion}
	Let $\K_\Lambda= \K^{[-1, 0]}(\proj \Lambda)$. There are well defined maps:
	\begin{center}
		\begin{tikzcd}[column sep=large]
			\ctor \K_\Lambda \arrow[r, "\Phi"] & \tor \Lambda \\
			\cctor \K_\Lambda \arrow[u, phantom, sloped, "\subseteq"] & \ftor \Lambda \arrow[l, "\Theta"] \arrow[u, phantom, sloped, "\subseteq"]
		\end{tikzcd}
	\end{center}
given by 
	$$\Phi((\X, \Y)) = (H^0(\Y), H^0(\Y)^{\perp})$$
and
	$$\Theta(\T, \F) = ( {}^{\perp_1}\mathcal{Z}, \mathcal{Z})$$
where $\mathcal{Z} = \left(H^0\right)^{-1}(\T)$. Moreover, $\Theta$ and $\Phi$ are inverse to each other when restricted to $\cctor \K$ and $\ftor \Lambda$. 
\end{theorem}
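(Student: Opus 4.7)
My plan is to use the cohomological functor $H^0 \colon \K_\Lambda \to \Mod \Lambda$ as the principal bridge between the two settings, exploiting its essential surjectivity (every module admits a projective presentation) and its compatibility with conflations in $\K_\Lambda$. The main technical tool is a \emph{modification lemma}: given $Y = (Y^{-1} \xrightarrow{y} Y^0) \in \K_\Lambda$ and a surjection $H^0(Y) \twoheadrightarrow N$ with kernel $K$ and projective cover $P_K \twoheadrightarrow K$, one lifts the composition $P_K \to K \hookrightarrow H^0(Y)$ to a map $P_K \to Y^0$ and forms $Y' = (Y^{-1} \oplus P_K \to Y^0)$, obtaining a conflation $Y \rightarrowtail Y' \twoheadrightarrow P_K[1]$ with $H^0(Y') = N$. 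Since $P_K[1]$ is injective in $\K_\Lambda$, it belongs to $\Y$ for \emph{any} cotorsion pair, so $Y \in \Y$ forces $Y' \in \Y$ by extension-closure; this is the engine for transporting quotient structure from $\Mod \Lambda$ to $\K_\Lambda$.

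\textbf{Well-definedness of $\Phi$.} Set $\T = H^0(\Y)$. The modification lemma gives closure of $\T$ under quotients. A horseshoe-style construction lifts any module extension $0 \to M_1 \to M_2 \to M_3 \to 0$ with $M_i = H^0(Y_i)$ (and $Y_i \in \Y$) to a conflation $Y_1 \rightarrowtail Y_2 \twoheadrightarrow Y_3$ in $\K_\Lambda$ that realises the given extension on $H^0$; extension-closure of $\Y$ then gives $Y_2 \in \Y$, whence $\T$ is extension-closed. Closure of $\T$ under direct sums is immediate from $H^0$ preserving them, and $\T^\perp$ is automatically closed under submodules and extensions. Finite-dimensionality of $\Lambda$ then ensures $(\T, \T^\perp)$ is a torsion pair via the standard torsion-radical construction.

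\textbf{Well-definedness of $\Theta$ and $\Phi \circ \Theta = \Id$.} Given $(\T, \F) \in \ftor \Lambda$, let $\mathcal{Z} = (H^0)^{-1}(\T)$. Extension-closure of $\mathcal{Z}$ follows from the long exact $H^0$-sequence of a conflation in $\K_\Lambda$, combined with closure of $\T$ under quotients and extensions. For $({}^{\perp_1}\mathcal{Z}, \mathcal{Z})$ to be a complete cotorsion pair, I would exploit the functorial finiteness of $(\T, \F)$: given $Z \in \K_\Lambda$ with $H^0(Z) = M$, the canonical torsion sequence $0 \to tM \to M \to M/tM \to 0$ lifts, via iterated application of the modification lemma and its dual (using the enough-injectives property of $\K_\Lambda$), to produce the two approximation conflations required for completeness. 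The identity $\Phi \circ \Theta = \Id$ then reduces to $H^0(\mathcal{Z}) = \T$ (immediate from essential surjectivity of $H^0$) together with $\T^\perp = \F$ (from $(\T, \F)$ being a torsion pair).

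\textbf{$\Theta \circ \Phi = \Id$ and main obstacle.} The remaining identity amounts to showing $\Y = (H^0)^{-1}(H^0(\Y))$ for every \emph{complete} cotorsion pair $(\X, \Y)$. One inclusion is tautological. For the other, given $Y'$ with $H^0(Y') \in H^0(\Y)$, I would invoke completeness to find a conflation $Y'' \rightarrowtail X \twoheadrightarrow Y'$ with $Y'' \in \Y$, $X \in \X$, then argue that the Ext-vanishing $\mathbb{E}(\X, \Y) = 0$ combined with $H^0(Y') \in H^0(\Y)$ forces $X$ to be trivial modulo injective summands, whence $Y' \in \Y$. This is the main obstacle: two objects of $\K_\Lambda$ can share the same $H^0$ while differing by injective summands (harmlessly in $\Y$) as well as by finer data in their projective presentations, and completeness is exactly what guarantees cotorsion pairs see through this ambiguity. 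The heredity ($\mathbb{E}^{\geq 2} = 0$) of $\K_\Lambda$ together with the enough-injectives property should furnish the final control.
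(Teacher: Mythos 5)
The paper itself gives no proof of this statement (it is imported verbatim from \cite[Theorem 3.6]{pauksztello2020co}), so your proposal has to stand on its own. The parts that work: your modification lemma (the conflation $Y\rightarrowtail Y'\twoheadrightarrow P_K[1]$ with $P_K[1]$ injective, hence in $\Y$) and the horseshoe construction do correctly show that $H^0(\Y)$ is closed under quotients and extensions, hence is a torsion class over a finite-dimensional algebra; so the well-definedness of $\Phi$ on all of $\ctor\K_\Lambda$, and the identity $H^0((H^0)^{-1}(\T))=\T$, are fine.

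The remaining steps have genuine gaps. For $\Theta\circ\Phi=\Id$ you attack $(H^0)^{-1}(H^0(\Y))=\Y$ with the wrong mechanism: there is no reason for the middle term $X$ of the conflation $Y''\rightarrowtail X\twoheadrightarrow Y'$ supplied by completeness to be ``trivial modulo injective summands'' (take $Y'\in\Y$ already; the construction still returns an arbitrary object of $\X$). What actually makes this step work is the elementary computation, valid for all $X,Z\in\K_\Lambda$,
\[
\EE(X,Z)\;\cong\;\Coker\bigl(\Hom_\Lambda(X^0,H^0(Z))\xrightarrow{\;x^*\;}\Hom_\Lambda(X^{-1},H^0(Z))\bigr),
\]
obtained by describing chain maps $X\to Z[1]$ modulo homotopy and using projectivity of $X^{-1},X^0$. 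It shows that membership of $Z$ in $\X^{\perp_1}=\Y$ depends only on $H^0(Z)$, so $(H^0)^{-1}(H^0(\Y))=\Y$ holds for \emph{every} cotorsion pair, with no appeal to completeness; note this is the exact mirror of the paper's $M$-semistability condition. More seriously, the well-definedness of $\Theta$ --- that $({}^{\perp_1}\mathcal{Z},\mathcal{Z})$ is a \emph{complete cotorsion pair} --- is the heart of the theorem, and your sketch does not engage with it: lifting the torsion sequence $0\to tM\to M\to M/tM\to 0$ by the modification lemma only uses that $\T$ is a torsion class, and produces neither objects of ${}^{\perp_1}\mathcal{Z}$, nor the approximation conflations witnessing $\K_\Lambda=\Cone(\mathcal{Z},{}^{\perp_1}\mathcal{Z})=\CCone(\mathcal{Z},{}^{\perp_1}\mathcal{Z})$, nor the non-automatic inclusion $({}^{\perp_1}\mathcal{Z})^{\perp_1}\subseteq\mathcal{Z}$. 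Functorial finiteness must enter through the existence of a module with $\T=\Fac M$, equivalently a two-term silting object $U$ with $\mathcal{Z}=\add(U)^{\wedge}$ and ${}^{\perp_1}\mathcal{Z}=\add(U)^{\vee}$; this is how \cite{pauksztello2020co} and \cite{adachi2014tilting} proceed, and your outline constructs no such approximations. Finally, you never verify that $\Phi$ restricted to $\cctor\K_\Lambda$ lands in $\ftor\Lambda$, i.e.\ that $H^0(\Y)$ is functorially finite for a complete cotorsion pair (in the paper this is $H^0(\Y)=\Fac H^0(U_\X\oplus U_\Y)$, cf.\ \cref{cotorsion_torsion}); without it the two composites in the claimed bijection cannot even be formed.
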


The following is an application of Theorem 5.7 in~\cite{adachi2022hereditary} to the extriangulated category $\K_\Lambda$. The statement follows from the fact that, in $\K_\Lambda$, all complete cotorsion pairs are hereditary and bounded. The original theorem gives a bijection between $\cctor \K_\Lambda$ and the set of its silting subcategories. Since $\K_\Lambda$ is Krull-Schmidt, \cref{siltinggenerator} allows us to state the bijection in terms of $\silt \K_\Lambda$. 

\begin{theorem}\cite[Theorem 5.7]{adachi2022hereditary}\label{cotorsionsilting} There is a one-to-one correspondence
	\begin{center}
		\begin{tikzcd}
			\cctor \K_\Lambda \arrow[r, "\Psi", shift left=.75ex] & \silt \K_\Lambda \arrow[l, "\Xi", shift left=.75ex]
		\end{tikzcd}
	\end{center}
	given by the maps 
	$$\Xi(U) = (\add(U)^{\vee}, \add(U)^{\wedge})$$
	and
	$$\Psi((\X, \Y)) = U$$
	where $U$ is a basic additive generator of the silting category $\X \cap \Y$. 
\end{theorem}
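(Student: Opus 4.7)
The plan is to invoke the general bijection from \cite[Theorem 5.7]{adachi2022hereditary}, which asserts that in a Krull--Schmidt extriangulated category with enough projectives and injectives satisfying WIC, the assignments $\Xi$ and $\Psi$ restrict to mutually inverse bijections between the set of bounded hereditary complete cotorsion pairs and the set of silting subcategories. To deduce the stated version for $\K_\Lambda$ from this, three things need to be checked: (i) that $\K_\Lambda$ satisfies the hypotheses of the general theorem; (ii) that inside $\K_\Lambda$, every complete cotorsion pair is automatically hereditary and bounded, so that no such qualifier is needed in the statement; and (iii) that silting subcategories can be replaced by isomorphism classes of basic silting objects.

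For (i), properties (a)--(d) of $\K_\Lambda$ recalled in \cref{preliminaries} together with the fact that $\K_\Lambda$ is Krull--Schmidt and $\Hom$-finite provide exactly what the general theorem requires. For (ii), any cotorsion pair $(\X,\Y)$ in $\K_\Lambda$ is hereditary simply because $\EE^i(X,Y) = 0$ for $i \geq 2$ by heredity of $\K_\Lambda$, so $\EE^2(\X,\Y)=0$ automatically. Boundedness $\K_\Lambda = \X^\wedge = \Y^\vee$ follows from property (b) and (c): every object of $\K_\Lambda$ fits into a conflation $P[1] \rightarrowtail X \twoheadrightarrow Q$ with $P,Q$ projective-injective (respectively projective), and since the projectives lie in $\X$ and injectives in $\Y$, a short induction on the length of such resolutions places $X$ in both $\X^\wedge$ and $\Y^\vee$. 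For (iii), this is exactly \cref{siltinggenerator}: in a Krull--Schmidt extriangulated category containing a silting object, silting subcategories admit additive generators, and $U \mapsto \add U$ is a bijection between $\silt \K$ and the collection of silting subcategories.

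The formulas for $\Xi$ and $\Psi$ then transport without change. For $\Xi$, given a basic silting object $U \in \silt \K_\Lambda$, the subcategory $\add(U)$ is presilting and (being a silting subcategory) closed under both cones and cocones inside the thick hull, so by \cref{vee-wedge-desc} both $\add(U)^\vee$ and $\add(U)^\wedge$ equal $\thi(\add(U)) = \K_\Lambda$; the general theorem produces the cotorsion pair $(\add(U)^\vee, \add(U)^\wedge)$, and completeness together with the cotorsion conditions are guaranteed in the bounded hereditary setting. For $\Psi$, the intersection $\X \cap \Y$ is the silting subcategory associated to $(\X,\Y)$ under the general bijection, and \cref{siltinggenerator} ensures it has a basic additive generator $U$, unique up to isomorphism.

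The main obstacle in this argument is not the bijection itself, which is cited from \cite{adachi2022hereditary}, but the verification of boundedness for every complete cotorsion pair; this is where the specific structure of $\K_\Lambda$ (having projective and injective ``resolutions'' of length one) plays the decisive role and allows one to drop the hypotheses ``bounded'' and ``hereditary'' from the statement of the general theorem.
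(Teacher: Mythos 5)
Your overall strategy coincides with the paper's: Theorem \ref{cotorsionsilting} is obtained by specializing \cite[Theorem 5.7]{adachi2022hereditary}, observing that in $\K_\Lambda$ every cotorsion pair is automatically hereditary (vanishing of $\EE^{\geq 2}$) and bounded, and invoking \cref{siltinggenerator} to pass from silting subcategories to basic silting objects. Points (i)--(iii) of your plan are exactly the checks the paper performs.

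Two of your supporting assertions are false, however, and one of them would be fatal if it were load-bearing. You claim that for a basic silting object $U$ the subcategory $\add(U)$ is closed under cones and cocones, so that by \cref{vee-wedge-desc} both $\add(U)^{\vee}$ and $\add(U)^{\wedge}$ equal $\thi(\add(U)) = \K_\Lambda$. This is not so: already for $U = \Lambda$ the cone of a non-split inflation $P \rightarrowtail Q$ of projectives is a genuine two-term complex outside $\add(\Lambda)$, and one computes $\add(\Lambda)^{\vee} = \add(\Lambda)$ while $\add(\Lambda)^{\wedge} = \K_\Lambda$, giving $\Xi(\Lambda) = (\add \Lambda, \K_\Lambda)$. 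Were your claim true, $\Xi$ would be the constant map $U \mapsto (\K_\Lambda, \K_\Lambda)$, which is not even a cotorsion pair, and the bijection would collapse. What \cref{vee-wedge-desc} actually says is that $\add(U)^{\vee}$ (resp.\ $\add(U)^{\wedge}$) is the closure of $\add(U)$ under extensions, direct summands and cocones (resp.\ cones); the equality with $\thi(\add(U))$ needs the extra hypothesis of closure under cones (resp.\ cocones), which silting subcategories do not satisfy. Fortunately this step is unnecessary: $\Xi(U) = (\add(U)^{\vee}, \add(U)^{\wedge})$ is literally the formula of the general theorem applied to the silting subcategory $\add(U)$. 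The second, smaller slip is in your verification of boundedness: a conflation $P[1] \rightarrowtail X \twoheadrightarrow Q$ with $P[1]$ injective and $Q$ projective splits, since $\EE(Q, P[1]) = \Hom_{\mathcal{D}^b(\Mod\Lambda)}(Q, P[2]) = 0$, so it exists only for $X \cong P[1] \oplus Q$. The conflations you want are $X^{-1} \rightarrowtail X^{0} \twoheadrightarrow X$ and $X \rightarrowtail X^{-1}[1] \twoheadrightarrow X^{0}[1]$, which place every $X$ in $\Cone(\X,\X) \subset \X^{\wedge}$ and in $\CCone(\Y,\Y) \subset \Y^{\vee}$ respectively, with no induction required.
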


\begin{definition}
	Let $\K$ be an extriangulated category and let $\X \subset \K$. We say that $\X$ is a \textbf{resolving subcategory} of $\K$ if $\K= \Cone (\K, \X)$ and it is closed under extensions, cocones and direct summands. Moreover, we say that $\X$ is \textbf{contravariantly finite} if every object in $\K$ admits a right $\X$-approximation. We write $\fres \K \subset \res \K$ for the sets of contravariantly finite resolving subcategories and resolving subcategories of $\K$, respectively. 
\end{definition}
\begin{remark}
	When $\K$ has enough projectives, we can swap the condition of $\X$ satisfying $\K = \Cone(\K, \X)$ in the previous definition by $\X$ containing all projective objects. Indeed, if $\proj \K \subset \X$, since all objects $C$ admit a deflation $P_C \twoheadrightarrow C$ with $P_C \in \proj \K$, then $\K \subset \Cone(\K, \proj \K) \subset \Cone(\K, \X)$. Reciprocally, if $\K = \Cone(\K, \X)$, then for all $P \in \proj \Lambda$ there is $X_P \in \X$ and a conflation $Y \rightarrowtail X_P \twoheadrightarrow P$. But since $P$ is projective, this conflation must split and $P \in \X$, as $\X$ is closed under direct summands.
	
	Suppose now that $\K = \K_\Lambda$ and let $\X$  be a resolving and contravariantly finite subcategory of $\K_\Lambda$. Then for every object $C \in \K_\Lambda$ a right $\X$-approximation of $C$ must be a deflation. Indeed,  Let $X \xrightarrow{c} C$ be an $\X$-approximation of $C$. Since $\X$ is resolving, there exists a conflation \begin{tikzcd}[cramped, sep=small] 
		Y \arrow[r, "f", tail] & X' \arrow[r, "g", twoheadrightarrow] & C 
	\end{tikzcd} such that $Y \in \K$ and $X' \in \X$. Thus, there exists a map $X' \xrightarrow{x} X$  such that $g = c \cdot x$. By the octahedral axiom, we have a triangle $\Cone(x) \rightarrow Y[1] \rightarrow \Cone(c)$ such that the following diagram commutes 
\begin{center}
	\begin{tikzcd}[cramped]
		X' \arrow[r, "x"] \arrow[d, equal ] & X \arrow[r] \arrow[d, "c"] & \Cone(x) \arrow[d] \\
		X' \arrow[r, "g"] & C \arrow[r] \arrow[d] & Y[1] \arrow[d]\\
		& \Cone(c) \arrow[r, equal] & \Cone(c)
	\end{tikzcd}
\end{center}

But since $\Cone(x) \in \K^{[-2,0]}(\proj \Lambda)$ and $Y[1] \in \K^{[-2,-1]}(\proj \Lambda)$, $\Cone(c)$ must be in $\K^{[-2,0]}(\proj \Lambda) \cap \K^{[-3,-1]}(\proj \Lambda) = \K^{[-2,-1]}(\proj \Lambda)$. Then the triangle $\Cone(c)[-1] \rightarrow X \xrightarrow{c} C$ lies ins $\K_\Lambda$ and $c$ is a deflation. Since $\K_\Lambda$ satisfies WIC, is Krull-Schmidt and $\Hom$-finite, we can also find an approximation that is minimal. 
\end{remark}

\begin{proposition}\cite[Dual of Lemma 3.1]{liu2020hereditary}\label{approx-orth}
	Assume that $\K$ is Krull-Schmidt, Hom-finite and satisfies WIC. Let $\C \subset \K$ be an extension-closed subcategory of $\K$. If we have a conflation $X \rightarrowtail C \twoheadrightarrow Y$ where the corresponding deflation is a minimal right $\C$-approximation of $Y$, then $X \in \C^{\perp_1}$. 
\end{proposition}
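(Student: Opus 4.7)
I plan to adapt the classical Wakamatsu-style argument to the extriangulated setting. Fix $C' \in \C$ and an extension $\delta \in \EE(C', X)$ realised by a conflation $X \xrightarrow{a} E \xrightarrow{b} C'$; the goal is to show $\delta = 0$. First I would build a $3 \times 3$-type diagram by pushing out $a$ along $f$, which (by the extriangulated $3\times 3$ lemma, a consequence of (ET4)) yields a commutative diagram
\begin{center}
\begin{tikzcd}
X \arrow[r, "f", tail] \arrow[d, "a"', tail] & C \arrow[r, "g", twoheadrightarrow] \arrow[d, "\iota", tail] & Y \arrow[d, equal] \\
E \arrow[r, tail] \arrow[d, "b"', twoheadrightarrow] & D \arrow[r, "\pi", twoheadrightarrow] \arrow[d, twoheadrightarrow] & Y \\
C' \arrow[r, equal] & C' &
\end{tikzcd}
\end{center}
whose rows and columns are conflations; the middle column realises the pushed-forward extension $f_*(\delta) \in \EE(C', C)$.

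The second step is to exploit the approximation property. Since $\C$ is extension-closed and $C, C' \in \C$, one gets $D \in \C$, so the deflation $\pi \colon D \to Y$ factors as $\pi = g \circ h$ for some $h \colon D \to C$. Then $g \circ (h \circ \iota) = \pi \circ \iota = g$ by commutativity of the top-right square, and minimality of $g$ forces the endomorphism $h \circ \iota$ of $C$ to be an automorphism. Hence $\iota$ is a split inflation, the middle column splits, and $f_*(\delta) = 0$.

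To conclude, I would invoke the long exact sequence obtained by applying $\EE(C', -)$ to the conflation $X \rightarrowtail C \twoheadrightarrow Y$:
$$\Hom(C', C) \xrightarrow{g_*} \Hom(C', Y) \xrightarrow{\partial} \EE(C', X) \xrightarrow{f_*} \EE(C', C).$$
The right-approximation property of $g$ makes $g_*$ surjective, so $\partial = 0$ and $f_*$ is injective; combined with $f_*(\delta) = 0$, this yields $\delta = 0$, and thus $X \in \C^{\perp_1}$.

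The main subtlety lies in the first step: setting up the pushout and $3 \times 3$ diagram inside an extriangulated category and correctly identifying which arrows are inflations and deflations. After that, the interplay between minimality and the approximation property is formal. The hypotheses of Krull--Schmidt, $\Hom$-finiteness and WIC enter only implicitly, to ensure that a minimal right $\C$-approximation exists and behaves as expected (in particular, that an endomorphism $r$ of $C$ with $g \circ r = g$ must be an isomorphism).
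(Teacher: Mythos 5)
Your proof is correct. The paper does not prove this statement itself --- it cites it as the dual of \cite[Lemma 3.1]{liu2020hereditary} --- and your argument is precisely the standard Wakamatsu-lemma proof in the extriangulated setting (the $(\mathrm{ET4})$-type diagram of Nakaoka--Palu realising $f_*(\delta)$ in the middle column, splitting it via minimality, and concluding by injectivity of $f_*$ from the long exact sequence), which is the same route taken in the cited reference. Your closing remark is also apt: the Krull--Schmidt, Hom-finiteness and WIC hypotheses are not really used in the argument itself, since the minimal right approximation is given as a hypothesis rather than constructed.
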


\begin{proposition}\cite[Proposition 5.15]{adachi2022hereditary}\label{cotorres}
	Let $\K$ be a Krull-Schmidt, Hom-finite extriangulated category  satisfying WIC and having enough projectives and injectives. If $(\X, \Y)$ is a hereditary complete cotorsion pair, then $\X$ is a contravariantly finite resolving subcategory of $\K$. Reciprocally, if $\X \in \fres \K$, then $(\X, \X^{\perp_1})$ is a complete cotorsion pair.
\end{proposition}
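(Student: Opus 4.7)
The plan is to handle each implication separately, using long exact $\EE$-sequences on one side and the given approximation structure plus a Salce-type pullback argument on the other.

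For the forward direction, assume $(\X,\Y)$ is a hereditary complete cotorsion pair. To verify that $\X$ is a resolving subcategory, I would check the four defining properties in turn: closure under direct summands is automatic because $\X = {}^{\perp_1}\Y$; all projectives lie in $\X$ because $\EE(P,-)=0$ on injectives and in particular on $\Y$; closure under extensions follows by applying $\EE(-,Y)$ for $Y\in\Y$ to a conflation $X'\rightarrowtail E\twoheadrightarrow X''$ and reading off the middle vanishing from the long exact sequence; and closure under cocones requires exactly the hereditary hypothesis, since the relevant piece of the long exact sequence is $\EE(X,Y)\to\EE(X',Y)\to\EE^2(Z,Y)$, whose outer terms vanish. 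For contravariant finiteness, the completeness assumption $\K=\Cone(\Y,\X)$ provides for each $C\in\K$ a conflation $Y\rightarrowtail X\twoheadrightarrow C$; applying $\Hom_\K(X',-)$ with $X'\in\X$ and using $\EE(X',Y)=0$ shows the deflation $X\twoheadrightarrow C$ is a right $\X$-approximation.

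For the converse, assume $\X\in\fres\K$. The nontrivial inclusion ${}^{\perp_1}(\X^{\perp_1})\subseteq\X$ is the key point. Given $Z\in{}^{\perp_1}(\X^{\perp_1})$, contravariant finiteness together with the remark after the definition of resolving subcategory yields a minimal right $\X$-approximation which is a deflation $X\twoheadrightarrow Z$, giving a conflation $K\rightarrowtail X\twoheadrightarrow Z$. By \cref{approx-orth} we have $K\in\X^{\perp_1}$, hence $\EE(Z,K)=0$, so the conflation splits and $Z$ is a summand of $X\in\X$. Since $\X$ is closed under direct summands, $Z\in\X$. This establishes $(\X,\X^{\perp_1})$ as a cotorsion pair.

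It remains to prove completeness, i.e.\ $\K=\Cone(\X^{\perp_1},\X)=\CCone(\X^{\perp_1},\X)$. The $\Cone$ side is immediate from the same approximation conflation $K\rightarrowtail X\twoheadrightarrow C$ constructed above. The $\CCone$ side, which I expect to be the main obstacle, would follow from a Salce-style pullback: starting with $C\in\K$, use the existence of enough injectives to embed $C\rightarrowtail I\twoheadrightarrow C'$ (noting $I\in\X^{\perp_1}$ because $\EE(-,I)=0$), then take a right $\X$-approximation $X'\twoheadrightarrow C'$ with kernel in $\X^{\perp_1}$, and glue the two conflations using the pullback axiom (ET4$^{op}$) to obtain a conflation $C\rightarrowtail Y\twoheadrightarrow X'$ together with $K\rightarrowtail Y\twoheadrightarrow I$. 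The second conflation identifies $Y$ as an extension of two objects of $\X^{\perp_1}$; the remaining step is therefore to verify that $\X^{\perp_1}$ is extension-closed, which follows from the long exact sequence for $\EE(X,-)$ once one knows $\EE^2(\X,\X^{\perp_1})=0$. In the hereditary setting (in particular $\K=\K_\Lambda$) this is automatic, and this is really where the argument hinges.
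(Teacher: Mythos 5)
The paper does not actually prove this proposition; it is imported verbatim from \cite[Proposition 5.15]{adachi2022hereditary}, so there is no in-text argument to compare against. Your reconstruction is the standard one and is essentially sound: in the forward direction you correctly isolate hereditariness as the ingredient for cocone-closure of $\X={}^{\perp_1}\Y$ and completeness as the source of right $\X$-approximations (which are deflations by WIC), and in the converse you correctly combine the minimal-approximation conflation with \cref{approx-orth} to get ${}^{\perp_1}(\X^{\perp_1})\subseteq\X$ and a Salce pullback to get $\K=\CCone(\X^{\perp_1},\X)$.

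Two corrections. A small one: projectives lie in $\X$ because $\EE(P,-)$ vanishes identically, not because it vanishes ``on injectives''. More substantively, your last step misidentifies where the argument hinges: extension-closure of $\X^{\perp_1}$ does \emph{not} require $\EE^2(\X,\X^{\perp_1})=0$. For a conflation $Y'\rightarrowtail Y\twoheadrightarrow Y''$ the six-term sequence
\[
\Hom(X,Y'')\to\EE(X,Y')\to\EE(X,Y)\to\EE(X,Y'')
\]
is exact at $\EE(X,Y)$, so $\EE(X,Y')=\EE(X,Y'')=0$ already forces $\EE(X,Y)=0$; no continuation into $\EE^2$ is needed. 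This matters because the converse of the proposition is stated with no hereditariness hypothesis, so an argument genuinely depending on $\EE^2(\X,\X^{\perp_1})=0$ would not prove the statement as given. Once this step is recognized as automatic, your proof works in the stated generality; hereditariness enters only where you first invoked it, namely for cocone-closure in the forward direction.
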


Recall that there is a one-to-one correspondence between the set $\silt \K_\Lambda$ of isomorphism classes of basic silting objects in $\K_\Lambda$ and the set $\tautilt \Lambda$ of support $\tau$-tilting basic modules in $\Mod$ $\Lambda$ given by the map $H^0 : \K_\Lambda \rightarrow \Mod \Lambda$ (\cref{silt-tilt}). Moreover, the map  $M \mapsto \vartheta(M) = (\Fac(M), M^\perp)$ gives a correspondence between the sets $\tautilt \Mod \Lambda$ and $\ftor \Lambda$ (\cref{theo-mod}). The following result shows that these bijections are compatible to the ones described in \cref{cotorsiontorsion} and \cref{cotorsionsilting}.
	
\begin{proposition}\label{cotorsion_torsion} Let $\Lambda$ be a finite-dimensional $\Bbbk$-algebra and consider the bijections $\Phi : \cctor \K_\Lambda \rightarrow \ftor \Lambda$ of \cref{cotorsiontorsion}, as well as $\Psi : \cctor \K_\Lambda \rightarrow \silt \K_\Lambda$ of \cref{cotorsionsilting}. The following diagram
\begin{center}
	\begin{tikzcd}[row sep=2ex, column sep=large]
			& \cctor \K \arrow[r, "\Psi"]  &\silt \K & \\ 
			\mathclap{\K^{[-1,0]}(\proj \Lambda)} & & & \\
			{} \arrow[rrr, dashed, no head]& & & {} \\
			& & & \mathclap{\Mod \Lambda} \\
			& \ftor \Lambda \arrow[from = uuuu, "\Phi", crossing over, crossing over clearance=7ex] & \tautilt \Lambda \arrow[l, "\vartheta"] \arrow[from = uuuu, crossing over, crossing over clearance=7ex,"H^0"] &
	\end{tikzcd}
\end{center}
commutes.
\end{proposition}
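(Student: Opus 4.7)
The diagram commutes precisely when, for every $(\X,\Y)\in\cctor\K_\Lambda$ with $U:=\Psi((\X,\Y))$ and $M:=H^0(U)$, the torsion pair $\Phi((\X,\Y))=(H^0(\Y),H^0(\Y)^\perp)$ agrees with $\vartheta(M)=(\Fac(M),M^\perp)$. By \cref{cotorsionsilting}, $\Psi$ and $\Xi$ are mutually inverse, so $\Y=\add(U)^\wedge$. Since a torsion pair is determined by its torsion class and the equality $M^\perp=\Fac(M)^\perp$ is standard (the inclusion $\Fac(M)^\perp\subseteq M^\perp$ is immediate from $M\in\Fac(M)$; conversely, applying $\Hom_\Lambda(-,X)$ to any surjection $M^n\twoheadrightarrow N$ with $N\in\Fac(M)$ yields $\Hom_\Lambda(N,X)=0$ whenever $\Hom_\Lambda(M,X)=0$), it suffices to establish the torsion-class identity $H^0(\add(U)^\wedge)=\Fac(M)$.

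For the inclusion $\subseteq$, I would argue by induction on $m\geq 0$ that $H^0(\add(U)^\wedge_m)\subseteq\Fac(M)$, using the filtration $\add(U)^\wedge=\bigcup_{m\geq 0}\add(U)^\wedge_m$ from the preliminaries. The base case $m=0$ is immediate from $H^0(\add(U))\subseteq \add(M)\subseteq\Fac(M)$. For the inductive step, every $Z\in\add(U)^\wedge_m$ fits into a conflation $Y\rightarrowtail X\twoheadrightarrow Z$ with $X\in\add(U)$ and $Y\in\add(U)^\wedge_{m-1}$; since this conflation is a triangle in $\D^b(\Mod\Lambda)$ and $H^1(Y)=0$ for any $Y\in\K_\Lambda$, the cohomology long exact sequence yields a surjection $H^0(X)\twoheadrightarrow H^0(Z)$, so $H^0(Z)$ is a quotient of an object of $\add(M)$ and thus lies in $\Fac(M)$.

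For the reverse inclusion, I would invoke \cref{cotorsiontorsion}, which ensures that $H^0(\add(U)^\wedge)=H^0(\Y)$ is itself a torsion class in $\Mod\Lambda$ and is therefore closed under finite direct sums and quotients. Because $U\in\add(U)\subseteq\add(U)^\wedge$, the module $M=H^0(U)$ belongs to this torsion class; its closure properties then force every quotient of every $M^n$ — that is, every object of $\Fac(M)$ — to lie in $H^0(\add(U)^\wedge)$. Combining the two inclusions establishes the required equality of torsion classes, and hence the commutativity of the diagram.

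I do not foresee a serious technical obstacle. The core of the argument is merely combining right-exactness of $H^0$ on conflations of $\K_\Lambda$ (applied to the inductive description of $\add(U)^\wedge$) with the torsion-class closure of $H^0(\Y)$ provided by \cref{cotorsiontorsion}; the only nontrivial external input is the identification $\Y=\add(U)^\wedge$ furnished by \cref{cotorsionsilting}.
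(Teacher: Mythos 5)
Your proof is correct and follows essentially the same route as the paper: both arguments reduce to the identity $H^0(\Y)=\Fac(H^0(U))$, proved by applying $H^*$ to conflations $Y'\rightarrowtail U'\twoheadrightarrow Y$ coming from $\Y=\add(U)^\wedge$ (using $H^1=0$ on $\K_\Lambda$) for one inclusion, and the quotient-closure of the torsion class $H^0(\Y)$ for the other. The only cosmetic difference is that you import $\Y=\add(U)^\wedge$ directly from the inverse bijection $\Xi$ of \cref{cotorsionsilting}, whereas the paper re-derives the generator $U=U_\X\oplus U_\Y$ of $\X\cap\Y$ via explicit approximations of $\Lambda$ and $\Lambda[1]$ (material it then records as \cref{approxcotor} for later use).
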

\begin{proof}
	Let $(\X, \Y)$ be a complete cotorsion pair in $\K_\Lambda$. By \cref{cotorsionsilting}, since $\X$ is contravariantly finite and resolving, the complex $\Lambda[1] =$ \begin{tikzcd}[cramped, sep=small] 
		\Lambda \arrow[d] \\0\end{tikzcd} admits a conflation 
	\begin{equation}\label{lambda_res}
	\begin{tikzcd}[cramped] U_\Y \arrow[r, "u", tail] & U_\X \arrow[r, "\pi_\X", two heads] & \Lambda[1]
	\end{tikzcd}
	\end{equation}	 
where the corresponding deflation is a minimal right $\X$-approximation and $U_\Y \in \Y$ by \cref{approx-orth}. Since $\Y$ is closed by extensions and $\Lambda[1] \in \text{inj } \K_\Lambda \subset \Y$, we get that $U_\X \in \X\cap \Y$. Moreover, since the sequence \begin{tikzcd}[cramped, sep=small] \Lambda \arrow[r, "i_\Y", tail] &U_\Y \arrow[r, "u", two heads] & U_\X  
\end{tikzcd} is also a conflation, $\X$ is closed under extensions and $\Lambda \in \proj \K_\Lambda \subset \X$, then $U_\Y \in \X \cap \Y$. This implies that $\add(U_\X \oplus U_\Y) \subset \X \cap \Y$, and since $\Lambda \in \thi(\add(U_\X \oplus U_\X))$, we obtain that $\thi(\add(U_\X \oplus U_\X)) = \K_\Lambda$. By \cref{uincv}, we have that $\X\cap \Y = \add(U_\X \oplus U_\Y)$, which gives $\Psi((\X, \Y)) = U_{\X\cap\Y}$, where  $U_{\X\cap\Y}$ is the basic object such that $\add( U_{\X\cap\Y}) = \add(U_\X \oplus U_\X)$.
	
Let $\T = H^0(\Y)$ be the torsion class associated to $\Phi((\X, \Y))$. Applying $H^*$ to the conflation~(\ref{lambda_res}), we get the exact sequence $H^0(U_\Y) \rightarrow H^0(U_\X) \rightarrow 0$. Since $\T$ is closed under quotients, then $\Fac(H^0(U_\X \oplus U_\Y)) \subset \T$. On the other hand, by \cref{cotorsionsilting}, we know that $\Y = \add(U_\X \oplus U_\Y)^{\wedge}$, in particular, $\forall \ Y \in \Y$ there exists a conflation $Y' \rightarrowtail U \twoheadrightarrow Y$ where $Y' \in \Y$ and $U \in \add(U_\X \oplus U_\Y)$. Applying again the functor $H^*$, we get the exact sequence 
	\[ H^0(Y') \rightarrow H^0(U) \rightarrow H^0(Y) \rightarrow H^1(Y') = 0\]
which implies that $H^0(Y) \in \Fac H^0(U_\X \oplus U_\Y)$ for all $Y \in \Y$. Then $\T = \Fac H^0(U_\X \oplus U_\Y) = \Fac H^0(U_{\X\cap\Y})$, which gives the result. 
\end{proof}

\begin{corollary}\label{approxcotor}
For any complete cotorsion pair $(\X, \Y)$ in $K^{[-1,0]}(\proj \Lambda)$, there exists conflations
\begin{center}
	\begin{tikzcd}[cramped] U_\Y \arrow[r, "u", tail] & U_\X \arrow[r, "\pi_\X", two heads] & \Lambda[1]
	\end{tikzcd} \end{center}
\begin{center}	
	\begin{tikzcd}[cramped] \Lambda \arrow[r, "i_\Y"] & U_\Y \arrow[r, "u", tail] & U_\X
	\end{tikzcd}
\end{center}
where 
\begin{enumerate}[(i)]
	\item $U_\X \in \X$ and $U_\Y \in \Y $;
	\item $U_\X \oplus U_\Y$ is a silting object such that $\X \cap \Y = \add(U_\X \oplus U_\Y)$;
	\item $\pi_\X$ is a minimal right $\X$-approximation of $\Lambda[1]$;
	\item $i_\Y$ is a minimal left $\Y$-approximation of $\Lambda$.
\end{enumerate}

\end{corollary}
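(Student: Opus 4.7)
The approach is to recognize that most of the statement is already established within the proof of Proposition \ref{cotorsion_torsion}, which constructs both conflations and verifies items (i)-(iii); the only genuinely new content is item (iv). For completeness, recall the construction: Proposition \ref{cotorres} gives $\X \in \fres \K_\Lambda$, so $\Lambda[1]$ admits a minimal right $\X$-approximation $\pi_\X : U_\X \twoheadrightarrow \Lambda[1]$, which is a deflation by the remark following the definition of resolving subcategory. Its cocone $U_\Y$ lies in $\X^{\perp_1} = \Y$ by Proposition \ref{approx-orth}, producing the first conflation. Rotation of this triangle inside $\D^b(\Mod \Lambda)$ yields the second conflation, which stays in $\K_\Lambda$ because $\Lambda \in \proj \Lambda \subset \K_\Lambda$. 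Closure of $\Y$ (resp. $\X$) under extensions applied to the first (resp. second) conflation, together with $\Lambda[1] \in \Y$ and $\Lambda \in \X$, yields $U_\X, U_\Y \in \X \cap \Y$. Since $\thi(\add(U_\X \oplus U_\Y))$ contains $\Lambda$ and hence equals $\K_\Lambda$, Proposition \ref{uincv} applied to the silting category $\X \cap \Y$ (coming from Theorem \ref{cotorsionsilting}) forces $\add(U_\X \oplus U_\Y) = \X \cap \Y$.

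For the approximation part of (iv), I would apply $\Hom_{\K_\Lambda}(-, Y)$ to the second conflation for an arbitrary $Y \in \Y$. The resulting long exact sequence
\[ \Hom(U_\Y, Y) \longrightarrow \Hom(\Lambda, Y) \longrightarrow \EE(U_\X, Y) \]
has vanishing rightmost term since $U_\X \in \X$ and $(\X, \Y)$ is a cotorsion pair, so every morphism $\Lambda \to Y$ factors through $i_\Y$.

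For left minimality of $i_\Y$, the plan is to transfer the right minimality of $\pi_\X$ across the rotated triangle via the triangle five-lemma. Given any $\phi : U_\Y \to U_\Y$ with $\phi \circ i_\Y = i_\Y$, complete the pair $(\id_\Lambda, \phi)$ to a morphism of triangles in $\D^b(\Mod \Lambda)$, producing $\psi : U_\X \to U_\X$ with $\pi_\X \circ \psi = \pi_\X$. Right minimality of $\pi_\X$ forces $\psi$ to be an isomorphism, and then the triangle five-lemma ensures that $\phi$ is also an isomorphism, establishing that $i_\Y$ is left minimal. The principal ``obstacle'' here is merely bookkeeping: assembling the conclusions scattered throughout the proof of Proposition \ref{cotorsion_torsion} and verifying the two short arguments above; no deep new idea is required.
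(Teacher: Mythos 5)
Your proposal is correct and follows the same route as the paper, which states this corollary without a separate proof precisely because items (i)--(iii) are extracted verbatim from the proof of \cref{cotorsion_torsion} (via \cref{cotorres}, \cref{approx-orth}, and \cref{uincv}). Your two supplementary arguments for item (iv) --- the vanishing of $\EE(U_\X, Y)$ to get the approximation property, and the transfer of right minimality of $\pi_\X$ to left minimality of $i_\Y$ via TR3 and the triangulated five-lemma --- are both sound and fill in exactly the detail the paper leaves implicit.
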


\begin{remark}\label{approximations}
	When $\X = (\add U)^\vee$ with $U \in \silt \K_\Lambda$, then $U_1 \twoheadrightarrow \Lambda[1]$  is a minimal $U$-right  approximation if and only if it is a minimal $\X$-right approximation. Indeed, by the proof of \cref{cotorsion_torsion}, we know that $\add(U_\X \oplus U_\Y) = \X \cap \Y = \add U$, which implies that $\pi_\X: U_\X \twoheadrightarrow \Lambda[1]$ is a minimal $U$-right approximation since $\add U \subset \X$. Consider now $\pi : U_1 \twoheadrightarrow \Lambda[1]$ a minimal $U$-right approximation. Since $U_1 \in \add U \subset \X$, there exists a map $f : U_1 \rightarrow U_\X$ such that $\pi = \pi_\X \circ f$. But $U_\X \in \add U$, so there is $g : U_\X \rightarrow U_1$ such that $\pi_\X = \pi \circ g$. Since $\pi = \pi \circ (g \circ f)$ and $\pi$ is minimal, $ g\circ f $ must be an isomorphism. Using that $\pi_\X$ is minimal as well, $f \circ g$ is also an isomorphism such that $\pi_\X = \pi_\X \circ (f\circ g)$. We conclude that $U_\X$ and $U_1$ are isomorphic. 
\end{remark}

\section{Main results}
\subsection{Thick subcategories and cotorsion pairs} The goal of this section is to prove the following result: 
 
\begin{theorem}\label{cotorsion-thick} Let $\Lambda$ be a finite-dimensional $\Bbbk$-algebra and let $\K_\Lambda =\K^{[-1,0]}(\proj \Lambda)$. There exist maps
	\begin{center}
		\begin{tikzcd}
			\ctor \K_\Lambda \arrow[r, shift left=.75ex, "\beta"] & \thi \K_\Lambda \arrow[l, shift left=.75ex, "\iota"]
		\end{tikzcd}
	\end{center}
	such that when restricted to complete cotorsion pairs and thick subcategories with enough injectives, they are inverse of each other.
\end{theorem}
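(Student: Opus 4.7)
The plan is to define $\beta$ and $\iota$ explicitly so that the induced bijection on $\cctor \K_\Lambda$ and $\fthi \K_\Lambda$ is compatible with the silting--cotorsion correspondence of \cref{cotorsionsilting} and the map $U \mapsto \thi(U_\rho)$ of \cref{main-coro}. For $\beta$, given $(\X, \Y) \in \ctor \K_\Lambda$, I will set $\beta((\X, \Y)) = \thi(U_\X)$, where $U_\X$ is the domain of a minimal right $\X$-approximation $U_\X \twoheadrightarrow \Lambda[1]$; such an approximation exists for every complete cotorsion pair by \cref{approxcotor}, and on non-complete cotorsion pairs the map can be extended by a default (which does not affect the statement since the bijection is only claimed on the restricted sets). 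For $\iota$, given $\T \in \thi \K_\Lambda$, I will set $\iota(\T) = ({}^{\perp_1}\Y_\T,\, \Y_\T)$, where $\Y_\T$ is the $(-)^\wedge$-closure of the additive hull of $\T \cup \text{inj }\K_\Lambda = \T \cup \add \Lambda[1]$. Thickness of $\T$ together with hereditariness of $\K_\Lambda$ will ensure that this pair is a cotorsion pair.

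To verify $\iota \circ \beta = \Id$ on $\cctor \K_\Lambda$: given complete $(\X, \Y)$, \cref{approxcotor} provides the silting object $U = U_\X \oplus U_\Y$ with $\X = \add(U)^\vee$ and $\Y = \add(U)^\wedge$, along with the conflation $\Lambda \rightarrowtail U_\Y \twoheadrightarrow U_\X$. Setting $\T = \thi(U_\X) = \beta((\X,\Y))$, the canonical conflation of $\Lambda$ into an object of $\text{inj }\K_\Lambda$ combines with the above to place $U_\Y$ inside $\Y_\T$. Then $\add(U) \subset \Y_\T$, so \cref{vee-wedge-desc} forces $\Y_\T = \add(U)^\wedge = \Y$, and taking left orthogonals recovers $\X$ by hereditariness.

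For $\beta \circ \iota = \Id$ on $\fthi \K_\Lambda$: given $\T \in \fthi \K_\Lambda$, the main step is to verify that $\iota(\T)$ is a \emph{complete} cotorsion pair. The enough-injectives property of $\T$ provides conflations $T \rightarrowtail I_\T \twoheadrightarrow T''$ with $I_\T$ relative injective in $\T$, which combined with the canonical conflations into $\add \Lambda[1]$ and \cref{approx-orth} furnish the right approximations demanded by completeness; the dual left approximations follow from hereditariness via \cref{cotorres}. Applying \cref{cotorsionsilting} to the resulting complete cotorsion pair produces a silting object $V = V_\X \oplus V_\Y$, and inspection of the minimal right $\X_\T$-approximation of $\Lambda[1]$ identifies $V_\X$ as a thick generator of $\T$, so $\beta(\iota(\T)) = \thi(V_\X) = \T$.

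The principal obstacle is precisely the completeness of $\iota(\T)$: the relative-injective resolutions available inside $\T$ must be carefully spliced with the canonical injective conflations of $\K_\Lambda$ to produce genuine $(\X_\T, \Y_\T)$-approximation conflations, which requires careful use of \cref{approx-orth} and the WIC property of $\K_\Lambda$. A secondary technicality, handled by \cref{uincv}, is confirming that the silting summand $V_\X$ extracted from $\iota(\T)$ generates $\T$ exactly, rather than a proper sub- or super-thick subcategory.
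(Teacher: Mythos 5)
Your definition of $\iota$ does not work, and the failure is already visible in the smallest example. You set $\iota(\T)=({}^{\perp_1}\Y_\T,\Y_\T)$ with $\Y_\T$ the $(-)^\wedge$-closure of $\add(\T\cup\add\Lambda[1])$, and the key claim is that $\add(U_\X\oplus U_\Y)\subset\Y_\T$, in particular $U_\Y\in\Y_\T$. But $U_\Y$ sits in the conflation $U_\Y\rightarrowtail U_\X\twoheadrightarrow\Lambda[1]$ as a \emph{cocone} of a map $U_\X\to\Lambda[1]$, whereas the $(-)^\wedge$-closure only adds extensions and \emph{cones} (\cref{vee-wedge-desc}); the second component $\Y=\X^{\perp_1}$ of a cotorsion pair is not closed under cocones, so there is no reason for $U_\Y$ to land in $\Y_\T$, and in general it does not. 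Concretely, take $\Lambda=\Bbbk$ and $\T=\{0\}$, which is the thick subcategory with enough injectives attached to the silting object $\Lambda$ (here $U_\X=U_\rho=0$, $U_\Y=\Lambda$, and the correct cotorsion pair is $(\add\Lambda,\K_\Lambda)$). Your $\Y_\T$ is $(\add\Lambda[1])^\wedge=\add\Lambda[1]$, so $\iota(\{0\})=(\K_\Lambda,\add\Lambda[1])$ --- the complete cotorsion pair of the \emph{other} silting object $\Lambda[1]$ --- and then $\beta(\iota(\{0\}))=\thi(\Lambda[1])=\K_\Lambda\neq\{0\}$. So $\beta\circ\iota\neq\Id$ with your $\iota$, and the gap is not repairable by splicing approximations more carefully: recovering the missing summand $U_\Y$ from $\T=\thi(U_\rho)$ intrinsically requires a cocone/Bongartz-type construction, not a cone-closure.

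This is exactly where the paper goes a different route: it works on the $\X$-side and sets $\iota(\T)=\{X\in\K_\Lambda\mid\exists\text{ an inflation }X\rightarrowtail C\text{ with }C\in\T\}$, which is a cocone-type operation and is shown (\cref{iota}) to produce a resolving subcategory; the surjectivity statement $\beta(\iota(\T))=\T$ (\cref{betasurj}) is then proved by forming the Bongartz completion $\overline{U}=U'\oplus V$ of the presilting generator $U$ of $\T$ supplied by \cref{lfinitethi}, the summand $V$ being precisely the analogue of the $U_\Y$ your construction cannot see. Two further points: your $\beta$ is only defined on complete cotorsion pairs (the paper's $\beta$ in \cref{beta} is an intrinsic Ingalls--Thomas-type formula on all resolving subcategories, and the first component of any cotorsion pair in the hereditary category $\K_\Lambda$ is resolving, so no "default extension" is needed); and even granting your $\Y_\T$, deducing completeness of $({}^{\perp_1}\Y_\T,\Y_\T)$ "from hereditariness via \cref{cotorres}" is circular, since that proposition requires you to first establish that the left-hand class is contravariantly finite and resolving, which is the actual content to be proved.
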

\begin{proof} It follows from Proposition~\ref{cotorres}, \cref{beta}, \cref{betainj}, and \cref{betasurj}. 
\end{proof}

\begin{proposition}\label{beta}
	Let $\Lambda$ be a finite-dimensional $\Bbbk$-algebra. There exists a well defined map
	$$\res \K_\Lambda  \overset{\beta}{\longrightarrow} \thi \K_\Lambda $$
	which takes any $\X \in \res \K_\Lambda$ and sends it to $$\beta(\X) = \{ X \in \X \ | \ \forall \ \text{conflation } X \rightarrowtail X' \twoheadrightarrow X'' \text{ such that } X' \in \X, \text{ then } X'' \in \X\}.$$
\end{proposition}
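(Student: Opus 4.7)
The plan is to verify the four defining properties of a thick subcategory: closure under direct summands, extensions, cones, and cocones. Since $\beta(\X) \subseteq \X$ by construction, each verification splits into two parts: first one shows that the candidate object lies in $\X$, using that $\X$ is resolving (hence closed under extensions, cocones, and direct summands); second, one checks the defining closure condition of $\beta(\X)$ by producing an auxiliary conflation in $\K_\Lambda$ and applying the $\beta$-property to objects already known to lie in $\beta(\X)$.

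Summand closure follows by taking a test conflation for a summand $X_1$ of $X = X_1 \oplus X_2 \in \beta(\X)$ and summing it with the identity conflation on $X_2$, then applying the $\beta$-property of $X$. Extension closure uses the axiom ET4: given $A \rightarrowtail B \twoheadrightarrow C$ with $A, C \in \beta(\X)$ and a test conflation $B \rightarrowtail B' \twoheadrightarrow B''$ with $B' \in \X$, composing the inflations $A \rightarrowtail B \rightarrowtail B'$ yields a $3 \times 3$ diagram featuring an intermediate conflation $A \rightarrowtail B' \twoheadrightarrow D$ and a third-column conflation $C \rightarrowtail D \twoheadrightarrow B''$. The $\beta$-property of $A$ gives $D \in \X$, and then that of $C$ gives $B'' \in \X$.

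For cocone closure, given $A \rightarrowtail B \twoheadrightarrow C$ with $B, C \in \beta(\X)$ and a test conflation $A \rightarrowtail A' \twoheadrightarrow A''$ with $A' \in \X$, I form the pushout of the parallel inflations $A \rightarrowtail B$ and $A \rightarrowtail A'$ guaranteed by the extriangulated axioms. This produces an object $H$ sitting in simultaneous conflations $A' \rightarrowtail H \twoheadrightarrow C$ and $B \rightarrowtail H \twoheadrightarrow A''$. Extension closure of $\X$ applied to the first gives $H \in \X$, and then the $\beta$-property of $B$ applied to the second gives $A'' \in \X$.

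The delicate case is closure under cones, where the construction is no longer symmetric and I will exploit that $\K_\Lambda$ is hereditary. Given $A \rightarrowtail B \twoheadrightarrow C$ with $A, B \in \beta(\X)$ and a test conflation $C \rightarrowtail C' \twoheadrightarrow C''$ with $C' \in \X$, the long exact sequence in $\EE$ obtained by applying $\Hom(-, A)$ to the test conflation contains the segment $\EE(C', A) \to \EE(C, A) \to \EE^2(C'', A) = 0$. The extension class $[B] \in \EE(C, A)$ therefore lifts to some $[G] \in \EE(C', A)$, realized by a conflation $A \rightarrowtail G \twoheadrightarrow C'$ whose pullback along $j : C \to C'$ recovers the original conflation $A \rightarrowtail B \twoheadrightarrow C$. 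The extriangulated $3 \times 3$ compatibility then produces a conflation $B \rightarrowtail G \twoheadrightarrow C''$ whose cone matches that of $j$. Extension closure of $\X$ applied to $A \rightarrowtail G \twoheadrightarrow C'$ yields $G \in \X$, and the $\beta$-property of $B$ applied to $B \rightarrowtail G \twoheadrightarrow C''$ then yields $C'' \in \X$. The main obstacle to spell out cleanly is verifying that the morphism $B \to G$ arising from this pullback is an inflation whose cone in $\K_\Lambda$ is precisely $C''$; this is where the full extriangulated structure of $\K_\Lambda$ (WIC, ET4 and its dual, together with heredity) must be invoked.
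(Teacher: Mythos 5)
Your proposal is correct and follows essentially the same route as the paper's proof: the same direct-sum trick for summand closure, the same octahedral/$3\times 3$ diagrams for extensions and cocones, and for cones the same use of heredity (your lifting of the extension class along the surjection $\EE(C',A)\twoheadrightarrow\EE(C,A)$, surjective because $\EE^2(C'',A)=0$, is exactly the paper's lifting of the map $B[-1]\to X''$ through the deflation $X'\twoheadrightarrow X''$). One small correction to your overview: in the cone case the membership $C\in\X$ does not follow from $\X$ being resolving (resolving subcategories are not closed under cones in general), but from the $\beta$-property of $A$ applied to the original conflation $A\rightarrowtail B\twoheadrightarrow C$ with $B\in\X$, as the paper notes explicitly.
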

\begin{proof}
	 Let $\X$ be a resolving subcategory of $\K_\Lambda$. First, we prove that $\beta(\X)$ is closed under direct summands. Suppose $X = X'\oplus X'' \in \beta(\X) \subset \X$, then $X'$ and $X''$ are in $\X$ since $\X$ is closed under direct summands. Let \begin{tikzcd}[cramped, sep=small] X' \arrow[r, "a", tail] & A \arrow[r, two heads] & B 
	\end{tikzcd} be a conflation with $A \in \X$, then \begin{tikzcd}[cramped, sep=small, ampersand replacement=\&] X'\oplus X'' \arrow[r, "{\left(\begin{smallmatrix} a & 0 \\ 0 & \Id_{X''} \end{smallmatrix}\right)}", tail] \&[6ex] A \oplus X'' \arrow[r, two heads] \& B 
	\end{tikzcd} is also a conflation with $A\oplus X'' \in \X$, which implies that $B \in \X$ since $X \in \beta(\X)$. Thus, $X' \in \beta(\X)$. 
	\smallskip
	
	Next, we prove that $\beta(\X)$ is closed under extensions. Consider a conflation $X \rightarrowtail X' \twoheadrightarrow X''$ in $\K_\Lambda$ such that $X, X'' \in \beta(\X)$. Since $\X$ is closed under extensions, $X'$ is in $\X$. Take $X' \rightarrowtail A \twoheadrightarrow B$ with $A \in \X$. By the octahedral axiom, there exists $C \in \K_\Lambda$ such that the diagram 
	\begin{center}
		\begin{tikzcd}
			X \arrow[r, tail] \arrow[d, equal] & X' \arrow[r, two heads] \arrow[d, tail] & X'' \arrow[d, tail]\\
			X \arrow[r, tail] & A \arrow[r, two heads] \arrow[d, two heads] & C \arrow[d, two heads] \\
			& B \arrow[r, equal] &B
		\end{tikzcd}
	\end{center}
	commutes and such that the last column and the middle row are conflations. Since $X \in \beta(\X)$, $C$ must be in $\X$. But $X'' \in \beta(\X)$ as well, so $B \in \X$. This implies that $X' \in \beta(\X)$ and $\beta(\X)$ is closed under extensions. 
	\smallskip
	
	We now prove that $\beta(\X)$ is closed under cones. Let $X \rightarrowtail X' \twoheadrightarrow X''$ be a conflation with $X, X' \in \beta(\X)$. In particular, $X' \in \X$, so $X'' \in \X$ by definition of $\beta(\X)$. Consider a conflation $X'' \rightarrowtail A \twoheadrightarrow B$ with $A \in \X$. Since $\Hom_{\mathcal{D}^b(\Mod \Lambda)}(B[-1], X[1]) = 0$, there exists $h : B[-1] \rightarrow X'$ and $C \in \K_\Lambda$, such that
	\begin{center}
		\begin{tikzcd}
			& B[-1] \arrow[r, equal] \arrow[d, "h"] &B[-1] \arrow[d] \arrow[dr, "0"] & \\
			X \arrow[r, tail] \arrow[d, equal] & X' \arrow[r, two heads] \arrow[d, tail] & X'' \arrow[d, tail] \arrow[r] & X[1]\\
			X \arrow[r, tail] & C \arrow[r, two heads] \arrow[d, two heads] & A \arrow[d, two heads] &\\
			& B \arrow[r, equal] &B &
		\end{tikzcd}
	\end{center}
is a commutative diagram where the second row and column are conflations. Since $\X$ is closed under extensions and $X, A \in \X$, we have that $C \in \X$. Likewise, $B$ must be in $\X$, since $X' \in \beta(\X)$, proving that $\beta(\X)$ is closed under cones. 
\smallskip
In order to prove that $\beta(\X)$ is closed under cocones, take now $X \rightarrowtail X' \twoheadrightarrow X''$ a conflation such that $X', X'' \in \beta(\X)$. Since $\X$ is resolving, it is closed under cocones and thus, $X \in \X$. Take $X \rightarrowtail A \twoheadrightarrow B$ a conflation in $\K_\Lambda$ with $A \in \X$. Using the octahedral axiom, we get the commutative diagram 
\begin{center}
	\begin{tikzcd}
		X''[-1] \arrow[r] \arrow[d, equal] & X \arrow[r, tail] \arrow[d, tail] & X' \arrow[r, two heads] \arrow[d, tail] & X'' \arrow[d, equal]\\
		X''[-1] \arrow[r] & A \arrow[r, tail] \arrow[d, two heads] & C \arrow[d, two heads] \arrow[r, two heads] & X''\\
		& B \arrow[r, equal] &B &
	\end{tikzcd}
\end{center}
Since both $\K$ and $\X$ are closed by extensions, $C \in \X$. Using that $X' \in \beta(\X)$, we get that $B \in \X$, which gives that $X \in \beta(\X)$, so it is closed under cocones.
\end{proof}

\begin{proposition}\label{iota}
	Let $\C \subset \K_\Lambda$ be an extension-closed subcategory of $\K_\Lambda$ that contains the zero object. Then $$\iota(\C) = \{X \in \K_\Lambda \ | \ \exists \ \text{an inflation }  X \rightarrowtail C \text{ with } C \in \C \}$$ is a resolving subcategory. Moreover, if $\C' \subset \C$ is also closed under extensions, then $\iota(\C') \subset \iota(\C)$.
\end{proposition}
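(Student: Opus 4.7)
The plan is to check the four defining conditions of a resolving subcategory (closure under direct summands, extensions, cocones, and containment of the projectives via the remark cited), and then deduce monotonicity directly. The assumption that $\C$ contains the zero object enters only to give the base case $\proj \K_\Lambda \subset \iota(\C)$, while extension-closedness of $\C$ is crucial for the stability under extensions.

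First I would handle the easy conditions. For projectives, note that for any $P \in \proj \K_\Lambda$ the rotated split triangle $P \to 0 \to P[1] \xrightarrow{\Id} P[1]$ in $\D^b(\Mod \Lambda)$ has all three terms in $\K_\Lambda$, hence gives a conflation $P \rightarrowtail 0 \twoheadrightarrow P[1]$; since $0 \in \C$, this shows $P \in \iota(\C)$. For direct summands, if $X_1 \oplus X_2 \rightarrowtail C$ is an inflation with $C \in \C$, precomposing with the split inflation $X_i \rightarrowtail X_1 \oplus X_2$ yields an inflation $X_i \rightarrowtail C$. For cocones, given a conflation $X' \rightarrowtail X \twoheadrightarrow X''$ with $X \in \iota(\C)$, precomposing a chosen inflation $X \rightarrowtail C$ with $X' \rightarrowtail X$ gives an inflation $X' \rightarrowtail C$.

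The main step is closure under extensions. Given $X' \rightarrowtail X \twoheadrightarrow X''$ with $X', X'' \in \iota(\C)$, choose conflations $X' \rightarrowtail C' \twoheadrightarrow D'$ and $X'' \rightarrowtail C'' \twoheadrightarrow D''$ with $C', C'' \in \C$. I would build the target in two pushout-type steps:
\begin{center}
\begin{tikzcd}[cramped, row sep=small, column sep=small]
X' \arrow[r, tail] \arrow[d, tail] & X \arrow[r, two heads] \arrow[d, tail] & X'' \arrow[d, equal] \\
C' \arrow[r, tail] \arrow[d, two heads] & Y \arrow[r, two heads] \arrow[d, two heads] & X'' \\
D' \arrow[r, equal] & D' &
\end{tikzcd}
\qquad
\begin{tikzcd}[cramped, row sep=small, column sep=small]
C' \arrow[r, tail] \arrow[d, equal] & Y \arrow[r, two heads] \arrow[d, tail] & X'' \arrow[d, tail] \\
C' \arrow[r, tail] & Z \arrow[r, two heads] \arrow[d, two heads] & C'' \arrow[d, two heads] \\
& D'' \arrow[r, equal] & D''
\end{tikzcd}
\end{center}
The left $3 \times 3$ diagram is the usual pushout of a conflation along an inflation and exists in $\K_\Lambda$ because it already exists in the triangulated category $\D^b(\Mod \Lambda)$ and $\K_\Lambda$ is extension-closed (so $Y$, living in a triangle with $C', X'' \in \K_\Lambda$, belongs to $\K_\Lambda$). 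For the right diagram, I would use heredity of $\K_\Lambda$: applying $\Hom(-,C')$ to the triangle $X'' \to C'' \to D'' \to X''[1]$ yields the long exact sequence
\[
\EE(C'',C') \longrightarrow \EE(X'',C') \longrightarrow \EE^2(D'',C') = 0,
\]
so the extension class of $C' \rightarrowtail Y \twoheadrightarrow X''$ lifts to a class in $\EE(C'',C')$, giving a conflation $C' \rightarrowtail Z \twoheadrightarrow C''$ whose pullback along $X'' \rightarrowtail C''$ recovers $Y$. The standard triangulated morphism-of-triangles argument then shows $Y \rightarrowtail Z$ is an inflation with cokernel $D'' \in \K_\Lambda$, so it is a genuine conflation in $\K_\Lambda$. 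Since $\C$ is extension-closed and $C', C'' \in \C$, we get $Z \in \C$; composing the inflations $X \rightarrowtail Y \rightarrowtail Z$ (composition of inflations is an inflation) produces $X \rightarrowtail Z$, proving $X \in \iota(\C)$.

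Finally, monotonicity $\iota(\C') \subset \iota(\C)$ for $\C' \subset \C$ is immediate: any inflation $X \rightarrowtail C$ with target in $\C'$ has its target also in $\C$.

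The main obstacle I expect is the second diagram: justifying that the lifted extension really produces an inflation $Y \rightarrowtail Z$ in $\K_\Lambda$ (and not merely a morphism) requires invoking heredity together with the morphism-of-triangles/$(\mathrm{ET4})$ machinery in $\D^b(\Mod \Lambda)$, and checking that $Z \in \K_\Lambda$ via extension-closedness. Everything else is essentially formal.
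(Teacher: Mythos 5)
Your proof is correct and follows essentially the same route as the paper's: closure under extensions is obtained by combining a homotopy pushout along one of the chosen inflations into $\C$ with a lift of the extension class along the other, using heredity ($\EE^2(-,-)=0$) and the octahedral axiom exactly as in the paper, the only difference being that you perform these two steps in the opposite order (pushout first, then lift, whereas the paper lifts along $X''\rightarrowtail C$ first and then pushes out along $X\rightarrowtail C'$). The remaining verifications (projectives via $P\rightarrowtail 0\twoheadrightarrow P[1]$, direct summands, cocones by composing inflations, and monotonicity) coincide with the paper's.
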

\begin{proof}
Let $\C$ be a extension-closed subcategory of $\K_\Lambda$ containing the zero object. Note that for $P \in \proj \Lambda$, $P \rightarrow 0 \rightarrow P[1]$ is always a conflation. Since $0 \in \C$, we have that $\proj \Lambda \subset \iota(\C)$. That $\iota(\C)$ is closed under cocones and direct summands follows directly from the definition of $\iota$. Take $X \rightarrowtail X' \twoheadrightarrow X''$ a conflation where $X, X'' \in \iota(\C)$. In particular, there is a conflation $X'' \rightarrowtail C \twoheadrightarrow W$ where $C \in \C$. Using that $\Hom_{\mathcal{D}^b(\Mod \Lambda)}(W[-1], M[1]) = 0$ and the octahedral axiom, we get the commutative diagram 
	\begin{center}
	\begin{tikzcd}
		& W[-1] \arrow[r, equal] \arrow[d] &W[-1] \arrow[d] \arrow[dr, "0"] & \\
		X \arrow[r, tail] \arrow[d, equal] & X' \arrow[r, two heads] \arrow[d, tail] & X'' \arrow[d, tail] \arrow[r] & X[1]\\
		X \arrow[r, tail] & A \arrow[r, two heads] \arrow[d, two heads] & C \arrow[d, two heads] &\\
		& W \arrow[r, equal] &W &
	\end{tikzcd}.
\end{center}
But $X \in \iota(\C)$ as well, so there exists a conflation $X \rightarrowtail C' \twoheadrightarrow W'$ with $C' \in \C$. Using the octahedral axiom once more, we can construct the commutative diagram 
\begin{center}
	\begin{tikzcd}
		C[-1] \arrow[r] \arrow[d, equal] & X \arrow[r, tail] \arrow[d, tail] & A \arrow[r, two heads] \arrow[d, tail] & C \arrow[d, equal]\\
		C[-1] \arrow[r] & C' \arrow[r, tail] \arrow[d, two heads] & B \arrow[d, two heads] \arrow[r, two heads] & C\\
		& W' \arrow[r, equal] &W' &
	\end{tikzcd}
\end{center}
and since $\C$ is closed under extensions, $B \in \C$.  Composing the inflations $X' \rightarrowtail A \rightarrowtail B$, we get that $X' \in \iota(\C)$.
\end{proof}

\begin{remark}
	Both the map $\beta$ defined in \cref{beta} as well as the map $\iota$ in \cref{iota} can be thought as dual to the maps first proposed by C.~Ingalls and H.~Thomas in \cite{ingalls2009noncrossing} between wide subcategories and torsion classes. The definition of $\beta$ is inspired by $\alpha$ in \cref{theo-mod}. In \cite{ingalls2009noncrossing}, the map taking a wide subcategory $\W$ of an hereditary algebra to a torsion class was defined as $\Fac(\W)$. In other for this map to be defined for any algebra, F.~Marks and J.~\v{S}t'ov\'\i\v{c}ek modified it to $\Filt(\Fac(\W))$ in \cite{marks2017torsion}. Our definition of $\iota$ recalls C.~Ingalls and H.~Thomas original map, and the arguments used in \cref{iota} to prove that $\iota$ is well defined rely on the fact $\K_\Lambda$ is an hereditary extriangulated category. 
\end{remark}

\begin{lemma}\label{lemmaiota}
	Let $\X$ be a contravariantly resolving subcategory of $\K_\Lambda$. Then $~\iota(\X) \subset \X$.
\end{lemma}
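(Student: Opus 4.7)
The plan is as follows. By Proposition \ref{cotorres}, the hypothesis that $\X$ is a contravariantly finite resolving subcategory ensures that $(\X, \X^{\perp_1})$ is a (hereditary) complete cotorsion pair in $\K_\Lambda$; in particular, $\X = {}^{\perp_1}(\X^{\perp_1})$. It therefore suffices to prove that any $Y \in \iota(\X)$ satisfies $\EE(Y, W) = 0$ for every $W \in \X^{\perp_1}$.

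Fix such a $Y$. By the definition of $\iota$, there is a conflation $Y \rightarrowtail C \twoheadrightarrow Z$ with $C \in \X$. Apply the cohomological functor $\Hom_{\K_\Lambda}(-, W)$ to this conflation. Because $\K_\Lambda$ is extension-closed in $\D^b(\Mod \Lambda)$, the conflation underlies a genuine triangle in $\D^b(\Mod \Lambda)$ and the associated long exact sequence of $\EE^i(-, W) = \Hom_{\D^b(\Mod \Lambda)}(-, W[i])$ contains the segment
\[
\EE(C, W) \longrightarrow \EE(Y, W) \longrightarrow \EE^2(Z, W).
\]
The left-hand term vanishes because $C \in \X$ and $W \in \X^{\perp_1}$; the right-hand term vanishes because $\K_\Lambda$ is hereditary (property (a) of the preliminaries). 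Hence $\EE(Y, W) = 0$, as required, and therefore $Y \in {}^{\perp_1}(\X^{\perp_1}) = \X$.

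There is no serious obstacle here. The argument is essentially a bookkeeping exercise combining the cotorsion-pair characterization of $\X$ from Proposition \ref{cotorres} with the vanishing $\EE^2 \equiv 0$ on $\K_\Lambda$. The only point that requires a moment of care is that the six-term exact sequence attached to a conflation in $\K_\Lambda$ extends to $\EE^2$, which is automatic because conflations in $\K_\Lambda$ are restrictions of triangles in $\D^b(\Mod \Lambda)$. Note also that, since $\X \subset \iota(\X)$ trivially (take the identity inflation $X \rightarrowtail X$), the lemma in fact yields $\iota(\X) = \X$ for any contravariantly finite resolving $\X$.
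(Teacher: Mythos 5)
Your argument is correct, and it takes a genuinely different route from the paper. The paper's proof works at the level of objects: it takes a minimal right $\X$-approximation $X' \twoheadrightarrow X$ of the given $X \in \iota(\X)$ (whose cocone lies in $\X^{\perp_1}$ by \cref{approx-orth}), runs the octahedral axiom against the defining inflation $X \rightarrowtail T$ with $T \in \X$, and uses the splitting $\EE(T,Y')=0$ to exhibit $X$ as a direct summand of $X' \in \X$. You instead pass immediately to the cotorsion-pair characterization $\X = {}^{\perp_1}(\X^{\perp_1})$ supplied by \cref{cotorres} and then do a one-line dimension shift along the conflation $Y \rightarrowtail C \twoheadrightarrow Z$, using $\EE(C,W)=0$ and $\EE^2(Z,W)=0$. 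Your version is shorter and more conceptual, but it leans on two features of the present setting: the heredity of $\K_\Lambda$ (the vanishing of $\EE^2$, together with the fact that conflations here really are triangles in $\D^b(\Mod\Lambda)$, so the long exact sequence extends one step further), and the full strength of \cref{cotorres}. The paper's octahedral argument avoids heredity entirely and would survive in a non-hereditary extriangulated category admitting minimal approximations and WIC, which is presumably why the author chose it. Your closing observation that $\X \subseteq \iota(\X)$ via the split conflation $X \rightarrowtail X \twoheadrightarrow 0$, so that in fact $\iota(\X) = \X$, is also correct and consistent with how the lemma is used later (e.g.\ in \cref{betainj}).
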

\begin{proof}
	Let $\X$ be a contravariantly finite resolving subcategory of $\K_\Lambda$ and let $X \in \iota(\X)$. Take a conflation $X \rightarrowtail T \twoheadrightarrow Y$ such that $T \in \X$. Since $\X$ is contravariantly finite, by \cref{approx-orth} there exists $X' \in \X$, $Y' \in \X^{\perp_1}$ and a conflation $Y' \rightarrowtail X' \twoheadrightarrow X$ such that the corresponding deflation $X' \twoheadrightarrow X$ is a minimal $\X$-right approximation. By the octahedral axiom, there exists $C \in \K_\Lambda$ such that the following diagram is commutative
	\begin{center}
		\begin{tikzcd}
			Y[-1] \arrow[r] \arrow[d, equal] & X' \arrow[r, tail] \arrow[d, two heads] & C \arrow[r, two heads] \arrow[d] & Y \arrow[d, equal]\\
			Y[-1] \arrow[r] & X \arrow[r, tail] \arrow[d] & T \arrow[d] \arrow[r, two heads] & Y\\
			& Y'[1] \arrow[r, equal] &Y'[1] &
		\end{tikzcd}
	\end{center}
	But $\EE(T, Y') = 0$, since $T \in \X$ and $Y' \in \X^{\perp_1}$. In particular, $C \simeq T \oplus Y'$. This implies that $X' \simeq X \oplus Y'$, and therefore $X \in \X$ because $\X$ is closed under direct summands. 
\end{proof}
\begin{lemma}\label{betainj}
	Let $\X$ be a contravariantly finite resolving category of $\K_\Lambda$, then $$\iota(\beta(\X)) = \X.$$ 
\end{lemma}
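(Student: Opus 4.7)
The inclusion $\iota(\beta(\X)) \subseteq \X$ is immediate: since $\beta(\X) \subseteq \X$ by the definition of $\beta$, combining monotonicity of $\iota$ (Proposition~\ref{iota}) with Lemma~\ref{lemmaiota} yields $\iota(\beta(\X)) \subseteq \iota(\X) \subseteq \X$.

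For the reverse inclusion, I exhibit, for each $X \in \X$, an inflation $X \rightarrowtail C$ with $C \in \beta(\X)$. By Proposition~\ref{cotorres}, the pair $(\X, \Y) := (\X, \X^{\perp_1})$ is a complete hereditary cotorsion pair, so Corollary~\ref{approxcotor} furnishes a basic silting object $U = U_\X \oplus U_\Y$ with $\add U = \X \cap \Y$ together with the conflations $U_\Y \rightarrowtail U_\X \twoheadrightarrow \Lambda[1]$ and $\Lambda \rightarrowtail U_\Y \twoheadrightarrow U_\X$. Applying the cocone form of completeness to $X$ produces a conflation $X \rightarrowtail Y \twoheadrightarrow X'$ with $Y \in \Y$ and $X' \in \X$; extension-closure of $\X$ forces $Y \in \X \cap \Y = \add U$, hence $Y = Y_\X \oplus Y_\Y$ with $Y_\X \in \add U_\X$ and $Y_\Y \in \add U_\Y$. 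Choosing $k$ so that $Y_\Y$ is a direct summand of $U_\Y^k$ and direct-summing the silting conflation $k$ times, we obtain an inflation $Y_\Y \rightarrowtail U_\X^k$. Since $\K_\Lambda$ satisfies WIC, composing with $X \rightarrowtail Y$ yields an inflation $X \rightarrowtail Y_\X \oplus U_\X^k$ whose target lies in $\add(U_\X) \subseteq \thi(U_\X)$.

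It thus suffices to prove $\thi(U_\X) \subseteq \beta(\X)$. Because $\beta(\X)$ is thick (Proposition~\ref{beta}), this reduces to the key claim $U_\X \in \beta(\X)$: every conflation $U_\X \rightarrowtail A \twoheadrightarrow B$ with $A \in \X$ satisfies $B \in \X$. Using that $\Y = \add(U)^\wedge$ (Proposition~\ref{vee-wedge-desc}) and the vanishing $\EE^2 = 0$ in $\K_\Lambda$, the requirement $\EE(B, \tilde Y) = 0$ for all $\tilde Y \in \Y$ reduces by induction on coresolution length to showing $\EE(B, U') = 0$ for $U' \in \add U$. The long exact $\EE$-sequence, combined with $A, U_\X \in \X$ and $U' \in \Y$, converts this into a surjectivity requirement on $\Hom(A, U') \to \Hom(U_\X, U')$, which I expect to be established by an octahedral argument using a right $\X$-approximation of $B$ together with the defining property of $U_\X$ as the minimal right $\X$-approximation of $\Lambda[1]$. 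This verification is the main obstacle of the proof.
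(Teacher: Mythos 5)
Your architecture is sound, and your first inclusion is exactly the paper's (via \cref{lemmaiota}). Your reverse inclusion is set up as a legitimate variant: rather than invoking $\X=\add(U_\X\oplus U_\Y)^\vee$ together with closure of $\iota(\beta(\X))$ under cocones, you build an explicit inflation $X\rightarrowtail Y_\X\oplus U_\X^{\oplus k}$ with target in $\add(U_\X)$, which works (note that composing inflations uses the extriangulated axioms, not WIC). But there is a genuine gap at precisely the point you flag: the key claim $U_\X\in\beta(\X)$ is never proved. Your reduction to the surjectivity of $\Hom(A,U')\to\Hom(U_\X,U')$ is a tautological repackaging — by the long exact sequence this surjectivity \emph{is} the statement $\EE(B,U')=0$, since $\EE(A,U')=0$ — and the ``octahedral argument'' that would establish it is only gestured at. As written, the proof is incomplete.

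The missing step is shorter than the route you sketch and needs no $\EE$-vanishing computation. Given a conflation $U_\X\overset{a}{\rightarrowtail} A\twoheadrightarrow B$ with $A\in\X$, the octahedral axiom applied with the conflation $U_\Y\rightarrowtail U_\X\overset{\pi_\X}{\twoheadrightarrow}\Lambda[1]$ of \cref{approxcotor} yields a deflation $\pi':A\twoheadrightarrow\Lambda[1]$ satisfying $\pi'\circ a=\pi_\X$ (the extension of $\pi_\X$ along $a$ exists because $\Hom(B[-1],\Lambda[1])=\EE^2(B,\Lambda)=0$). Since $A\in\X$ and $\pi_\X$ is a \emph{minimal} right $\X$-approximation of $\Lambda[1]$, the map $\pi'$ factors as $\pi'=\pi_\X\circ a'$ for some $a':A\to U_\X$; then $\pi_\X\circ(a'\circ a)=\pi'\circ a=\pi_\X$, and minimality forces $a'\circ a$ to be an isomorphism. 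Hence $a$ is a section, the conflation splits, and $B$ is a direct summand of $A\in\X$, so $B\in\X$. Inserting this splitting argument closes the gap; with it, the rest of your proof goes through.
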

\begin{proof}
Since $\beta(\X) \subset \X$, the previous lemma shows that $\iota(\beta(\X)) \subset \iota(\X) \subset \X$.
Consider now $U_\X$ as in \cref{approxcotor}. We will show that $U_\X \in \beta(\X)$. Let \begin{tikzcd}[cramped, sep=small] U_\X \arrow[r, "x", tail] & X \arrow[r, two heads] & X'
\end{tikzcd} be a conflation with $X \in \X$. By the octahedral axiom, there exists $W \in \K_\Lambda$ and a commutative diagram
	\begin{center}
	\begin{tikzcd}
		\Lambda \arrow[r] \arrow[d, equal] & U_\Y \arrow[r, tail] \arrow[d, tail] & U_\X \arrow[r, two heads, "\pi_\X"] \arrow[d, "x"] & \Lambda[1]\arrow[d, equal]\\
		\Lambda \arrow[r] & W \arrow[r, tail] \arrow[d] & X \arrow[d] \arrow[r, two heads, "\pi'_\X"] & \Lambda[1]\\
		& X' \arrow[r, equal] &X' &
	\end{tikzcd}
\end{center}
such that the second line is a conflation. Since $\pi_\X$ is a minimal $\X$-approximation, there exists $x' : X \rightarrow U_\X$ such that $\pi'_\X = \pi_\X \circ x'$, which implies that $\pi_\X \circ (x' \circ x) = \pi'_\X \circ x = \pi_\X$. Since $\pi_\X$ is minimal, we get that $x'\circ x$ is an isomorphism. In particular, $x$ is a section, which implies that $X'$ is a direct summand of $X\in \X$. This gives that $X' \in \X$ and $U_\X \in \beta(\X)$.

Since we have an inflation $U_\Y \rightarrowtail U_\X$, $U_\Y\in \iota(\beta(\X))$ and $\add(U_\X \oplus U_\Y) \subset \iota(\beta(\X))$. But $\iota(\beta(\X))$ is closed under cocones, so by \cref{vee-wedge-desc}, $\X = \add(U_\X \oplus U_\Y)^{\vee} \subset \iota(\beta(\X))$. 
\end{proof}

\cref{betainj} tell us that, when restricted to contravariantly finite resolving categories, the map $\beta$ is injective. The following results will allow us to explicitly describe the image of $\beta$. 

\begin{proposition}\label{ccone}
	Let $(\X, \Y)$ be a complete cotorsion pair in $\K_\Lambda$, then $$\X = \CCone(\X\cap\Y, \X\cap\Y).$$
\end{proposition}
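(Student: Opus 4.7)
The plan is to prove both inclusions of $\X = \CCone(\X \cap \Y, \X \cap \Y)$ directly, and the argument is short once one invokes heredity of $\K_\Lambda$.

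For the easy inclusion $\CCone(\X\cap\Y,\X\cap\Y) \subseteq \X$, any object on the right fits into a conflation $X \rightarrowtail A \twoheadrightarrow B$ with $A, B \in \X\cap\Y \subseteq \X$. Since $(\X, \Y)$ is a complete (hence hereditary) cotorsion pair, \cref{cotorres} gives that $\X$ is resolving, so closed under cocones, and therefore $X \in \X$.

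The nontrivial direction is $\X \subseteq \CCone(\X\cap\Y,\X\cap\Y)$. Given $X \in \X$, I would use completeness in the form $\K_\Lambda = \CCone(\Y, \X)$ to obtain a conflation
\[ X \rightarrowtail Y_0 \twoheadrightarrow X_0 \]
with $Y_0 \in \Y$ and $X_0 \in \X$, and then argue that both $Y_0$ and $X_0$ automatically lie in $\X \cap \Y$. For $Y_0$, since $\X$ is closed under extensions and $X, X_0 \in \X$, we immediately get $Y_0 \in \X$, so $Y_0 \in \X \cap \Y$. For $X_0$, the key step is to apply $\Hom(Z, -)$ for an arbitrary $Z \in \X$ to the conflation, yielding the long exact sequence
\[ \EE(Z, X) \to \EE(Z, Y_0) \to \EE(Z, X_0) \to \EE^2(Z, X). \]
The last term vanishes because $\K_\Lambda$ is hereditary, and $\EE(Z, Y_0) = 0$ because $(\X,\Y)$ is a cotorsion pair with $Z \in \X$ and $Y_0 \in \Y$. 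Hence $\EE(Z, X_0) = 0$ for every $Z \in \X$, which gives $X_0 \in \X^{\perp_1} = \Y$, and therefore $X_0 \in \X\cap\Y$. Combining these, the original conflation exhibits $X$ as a member of $\CCone(\X\cap\Y, \X\cap\Y)$.

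The essential ingredient — and the only place where anything more than formal manipulation of conflations enters — is heredity ($\EE^2 = 0$ in $\K_\Lambda$), which propagates the cotorsion vanishing one step further so that the cokernel of the approximation automatically lands in $\Y$. Without it, completeness alone would only give $X_0 \in \X$ and some iterative or dimension-shifting argument would be needed; in $\K_\Lambda$ the proof collapses to a single application of the long exact sequence, so I do not anticipate any real obstacle beyond verifying that the standard long exact sequence of $\EE$-groups for a conflation extends through $\EE^2$ in our extension-closed setting inside $\D^b(\Mod \Lambda)$.
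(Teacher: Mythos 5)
Your proof is correct, and it takes a genuinely different and more direct route than the paper's. The paper first identifies $\X\cap\Y$ with $\add(U_\X\oplus U_\Y)$ and uses $\X=(\X\cap\Y)^\vee$ (via \cref{approxcotor} and \cref{cotorsionsilting}); it then takes the first two steps of a $\X\cap\Y$-coresolution of $X$ and, by rotating triangles, applying the octahedral axiom, and using $\Hom(X_2,X[2])=0$ together with $\EE(X_2,U_0)=0$, shows that the second cosyzygy splits off, so the coresolution truncates to length one. You instead start from the completeness axiom $\K_\Lambda=\CCone(\Y,\X)$ to get $X\rightarrowtail Y_0\twoheadrightarrow X_0$ with $Y_0\in\Y$, $X_0\in\X$, observe that $Y_0\in\X$ by extension-closure, and push $X_0$ into $\Y$ via the long exact sequence $\EE(Z,Y_0)\to\EE(Z,X_0)\to\EE^2(Z,X)=0$ for $Z\in\X$. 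Both arguments hinge on heredity of $\K_\Lambda$ in an essential way, but yours avoids the silting/coresolution machinery entirely and is valid verbatim for any hereditary complete cotorsion pair in an extriangulated category arising as an extension-closed subcategory of a triangulated one (where the long exact sequence of $\EE$-groups is just the usual one in $\D^b(\Mod\Lambda)$). The only minor point worth making explicit is that the easy inclusion also follows directly from the cotorsion axioms without invoking \cref{cotorres}: for a conflation $X\rightarrowtail A\twoheadrightarrow B$ with $A,B\in\X$ and any $Y\in\Y$, the exact sequence $\EE(A,Y)\to\EE(X,Y)\to\EE^2(B,Y)=0$ gives $X\in{}^{\perp_1}\Y=\X$.
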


\begin{proof}
	Recall that $\mathcal{U} = \X\cap\Y = \add (U_\X\oplus U_\Y)$ and that $\X = \mathcal{U}^{\vee}$ by \cref{approxcotor}. Let $X \in \X$, there must exist $m \in \mathbb{Z}_{\geq 0}$ such that $X \in \mathcal{U}^{\vee}_m$, that is, we can find conflations
	\begin{equation}\label{T1}
		X \rightarrowtail U_0 \twoheadrightarrow X_1 \dashrightarrow X[1] 
	\end{equation}
	\begin{equation}\label{T2}
		X_1 \rightarrowtail U_1 \twoheadrightarrow X_2 \dashrightarrow X_1[1]
	\end{equation}
	with $X_i \in \mathcal{U}^{\vee}_{m-i} \subset \X$ for $i = 1,2$, and $U_0, U_1 \in \mathcal{U}$. Shifting and rotating triangles~(\ref{T1}) and (\ref{T2}), and using that $\Hom_{\mathcal{D}^b(\Mod \Lambda)}(X_2, X[2]) = 0$, as well as the octahedral axiom, we get a commutative diagram
	\begin{center}
		\begin{tikzcd}
			X_2 \arrow[r, equal] \arrow[d] & X_2 \arrow[d, "0"]& \\
			X_1[1] \arrow[r] \arrow[d] &X[2] \arrow[r] \arrow[d] &U_0[2] \arrow[d, equal] \\
			U_1[1] \arrow[r] & X_2[1] \oplus X[2] \arrow[r] & U_0[2]
		\end{tikzcd}
	\end{center}
	where the last row is a triangle. Then $U_0 \rightarrow U_1 \rightarrow X_2 \oplus X[1] \dashrightarrow U_0[1]$ is a triangle as well. Since $U_0 \in \Y$, $\EE(X_2, U_0) = 0$, the morphism $X_2 \oplus X[1] \dashrightarrow U_0[1]$ must be of the form $X_2 \oplus X[1] \xrightarrow{(0, f)} U_0[1]$. This in turn implies that $U_1 \simeq X_2 \oplus \Cone(f)[-1]$, thus $U' = \Cone(f[-1]) = \Cone(f[-1])$ belongs to $\mathcal{U}$ since $\U$ is closed under direct summands. Remark that, by the commutativity of the previous diagram, $f[-1]$ is exactly the inflation $X \rightarrowtail U_0$. We get that $U' \simeq X_0$, and so, $X \in \CCone(\mathcal{U}, \mathcal{U})$.
\end{proof}

\begin{lemma}\label{betaY}
	Let $(\X, \Y)$ be a complete cotorsion pair in $\K_\Lambda$ and consider the conflation $U_\Y\rightarrowtail U_\X \twoheadrightarrow \Lambda[1]$ as in \cref{approxcotor}. Then $$\beta(\X) \cap \Y = \add (U_\X).$$
\end{lemma}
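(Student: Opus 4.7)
The plan is to establish the inclusion $\add(U_\X) \subseteq \beta(\X) \cap \Y$ directly, and then to obtain the reverse inclusion by ruling out any indecomposable summand of $U_\Y$ from $\beta(\X)$. The forward inclusion is essentially free: by \cref{approxcotor}, $U_\X \in \X \cap \Y \subseteq \Y$; the inclusion $U_\X \in \beta(\X)$ was established during the proof of \cref{betainj}; and both $\beta(\X)$ and $\Y$ are closed under direct summands and direct sums, so $\add(U_\X) \subseteq \beta(\X) \cap \Y$.

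For the reverse inclusion, take $Z \in \beta(\X) \cap \Y$. Since $\beta(\X) \subseteq \X$, we have $Z \in \X \cap \Y = \add(U_\X \oplus U_\Y)$, and we can write $Z = Z_\X \oplus Z_\Y$ with $Z_\X \in \add(U_\X)$ and $Z_\Y \in \add(U_\Y)$. Because $\beta(\X)$ is closed under summands and $U_\X \oplus U_\Y$ is basic, showing $Z_\Y = 0$ reduces to showing that no indecomposable summand $V$ of $U_\Y$ lies in $\beta(\X)$. I argue this by contradiction: suppose such a $V$ exists and fix a decomposition $U_\Y = V \oplus W$. The split inflation $V \hookrightarrow U_\Y$ composed with the inflation $u : U_\Y \rightarrowtail U_\X$ from \cref{approxcotor} is an inflation, so it completes to a conflation $V \rightarrowtail U_\X \twoheadrightarrow C$; applying the octahedral axiom to the factorization $V \to U_\Y \to U_\X$ produces an induced conflation $W \rightarrowtail C \twoheadrightarrow \Lambda[1]$.

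The decisive step, which I expect to be the main obstacle, is to identify the summand structure of $C$ precisely enough to derive a contradiction. From $V \in \beta(\X)$ and $U_\X \in \X$ we get $C \in \X$, and from the induced conflation with $W, \Lambda[1] \in \Y$ we get $C \in \Y$, so $C \in \X \cap \Y = \add(U_\X \oplus U_\Y)$. Using that $\EE(\add U, \add U) = 0$ for the silting object $U = U_\X \oplus U_\Y$, the deflation $C \twoheadrightarrow \Lambda[1]$ is a right $\add(U)$-approximation of $\Lambda[1]$; by \cref{approximations} the minimal such approximation is $\pi_\X : U_\X \twoheadrightarrow \Lambda[1]$. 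A standard fact in Krull--Schmidt categories then says that any right approximation decomposes, up to choice of complement, as the direct sum of the minimal one and a zero morphism, producing an isomorphism $C \cong U_\X \oplus C''$ under which the deflation becomes $(\pi_\X, 0)$. Taking the cocone of $(\pi_\X, 0)$ summandwise gives $W \cong U_\Y \oplus C''$, and then $U_\Y = V \oplus W \cong V \oplus U_\Y \oplus C''$; Krull--Schmidt cancellation forces $V \oplus C'' \cong 0$, contradicting $V \neq 0$.
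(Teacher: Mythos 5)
Your proposal is correct, and it coincides with the paper's argument in its setup: the same forward inclusion $\add(U_\X)\subseteq\beta(\X)\cap\Y$ via the proof of \cref{betainj}, the same reduction to showing that no indecomposable summand $V$ of $U_\Y$ lies in $\beta(\X)$, and the same octahedral diagram producing the conflations $V\rightarrowtail U_\X\twoheadrightarrow C$ and $W\rightarrowtail C\twoheadrightarrow\Lambda[1]$. Where you diverge is the endgame. The paper never needs to ``identify the summand structure of $C$'': having observed $C\in\X$ (because $V\in\beta(\X)$ and $U_\X\in\X$), it notes that $V\in\Y$ forces $\EE(C,V)=0$, so the \emph{first} conflation $V\rightarrowtail U_\X\twoheadrightarrow C$ splits outright, giving $U_\X\simeq V\oplus C$ and hence $V\in\add(U_\X)\cap\add(U_\Y)=\{0\}$ in one line. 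You instead work with the \emph{second} conflation: you show $C\in\X\cap\Y=\add(U)$, recognize $C\twoheadrightarrow\Lambda[1]$ as a right $\add(U)$-approximation (using $\EE(U,W)=0$), compare it with the minimal approximation $\pi_\X$ via \cref{approximations} to get $C\cong U_\X\oplus C''$ with $W\cong U_\Y\oplus C''$, and conclude by Krull--Schmidt cancellation in $U_\Y\cong V\oplus U_\Y\oplus C''$. Both routes are sound; the paper's splitting trick is shorter and avoids the approximation machinery, while your version has the mild bonus of actually computing $C$ and $W$ (in fact your cancellation also shows $C''=0$, i.e.\ $C\cong U_\X$ and $W\cong U_\Y$), at the cost of invoking the uniqueness of minimal approximations and the summandwise description of cocones.
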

\begin{proof}
	By the proof of \cref{betainj}, we know that $U_\X \in \beta(\X) \cap \Y$. Since both $\beta(\X)$ and $\Y$ are additive subcategories, we get that $\add U_\X \subset \beta(\X) \cap \Y$. Now take $Y \in \beta(\X) \cap \Y \subset \X \cap \Y= \add(U_\X \oplus U_\Y)$ and suppose that $Y$ is indecomposable.  Recall that $U_\X$ and $U_\Y$ share no non-zero direct summands \cite[Lemma 2.25]{aihara2012silting}, so we can also suppose that $Y$ is a direct summand of $U_\Y = Y \oplus Y'$. Using the octahedral axiom, we have the commutative diagram 
	\begin{center}
		\begin{tikzcd}
			Y \arrow[r, equal] \arrow[d, tail]&Y \arrow[d, tail]&\\
			U_\Y \arrow[r, tail] \arrow[d, two heads] &U_\X \arrow[r, two heads] \arrow[d, two heads] &\Lambda[1] \arrow[d, equal]\\
			Y' \arrow[r, tail] & C \arrow[r, two heads] & \Lambda[1]
		\end{tikzcd}
	\end{center}
	Since $Y \in \beta(\X)$, the complex $C$ must be in $\X$, which implies that $\EE(C, Y) = 0$. That is, $U_\X \simeq Y \oplus C$ and $Y \in \add(U_\X) \cap \add(U_\Y) = \{0\}$. Thus $\beta(\X) \cap \Y = \add(U_\X)$.
\end{proof}

\begin{lemma}\label{desc_beta}
	Let $\X$ be a contravariantly finite resolving subcategory of $\K_\Lambda$ and let $U_\X$ be as in \cref{approxcotor}. Then $$\beta(\X) = \CCone(\add(U_\X), \add(U_\X)) = \thi(U_\X).$$
	In particular, $\beta(\X)$ is a thick subcategory with enough injectives. All the injectives objects of $\beta(\X)$ lie in $\add U_\X$ and all objects in $\beta(\X)$ have injective dimension $\leq 1$.
\end{lemma}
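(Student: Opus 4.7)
The plan is to establish three inclusions forming a circle: $\beta(\X) \subseteq \CCone(\add(U_\X), \add(U_\X)) \subseteq \thi(U_\X) \subseteq \beta(\X)$. The last two are immediate. For $\CCone(\add(U_\X), \add(U_\X)) \subseteq \thi(U_\X)$, note that $\thi(U_\X)$ contains $\add(U_\X)$ and is closed under cocones. For $\thi(U_\X) \subseteq \beta(\X)$, the proof of \cref{betainj} already yields $U_\X \in \beta(\X)$, and $\beta(\X)$ is thick by \cref{beta}. The real work is the first inclusion.

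Given $X \in \beta(\X) \subseteq \X$, I would construct an explicit conflation $X \rightarrowtail U' \twoheadrightarrow W$ with $U', W \in \add(U_\X)$. Since $\K_\Lambda$ has enough injectives (namely $\add(\Lambda[1])$) and is hereditary, $X$ admits an injective coresolution $X \rightarrowtail I^0 \twoheadrightarrow I^1$. Taking a suitable direct sum of copies of the conflation $U_\Y \rightarrowtail U_\X \twoheadrightarrow \Lambda[1]$ of \cref{approxcotor} produces a conflation $U_\Y' \rightarrowtail U_\X' \twoheadrightarrow I^0$ with $U_\X' \in \add(U_\X)$ and $U_\Y' \in \add(U_\Y)$. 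Since $X \in \X$ and $U_\Y' \in \Y$, the vanishing $\EE(X, U_\Y') = 0$ lifts the inflation $X \to I^0$ to a morphism $X \to U_\X'$, which by WIC is itself an inflation; denote its cofiber by $W$.

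To identify $W$, apply the octahedral axiom to the factorization $X \to U_\X' \to I^0$: this produces a conflation $U_\Y' \rightarrowtail W \twoheadrightarrow I^1$ in $\K_\Lambda$. Since $X \in \beta(\X)$ and $U_\X' \in \X$, the very definition of $\beta$ forces $W \in \X$. Since $U_\Y' \in \Y$ and $I^1 \in \Y$ (the latter being injective), extension-closure of $\Y$ gives $W \in \Y$. Thickness of $\beta(\X)$ together with $X, U_\X' \in \beta(\X)$ gives $W \in \beta(\X)$, and then \cref{betaY} identifies $W \in \beta(\X) \cap \Y = \add(U_\X)$. This completes the main inclusion.

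The ``in particular'' statements follow readily: every $X \in \beta(\X)$ sits in a conflation whose outer terms lie in $\add(U_\X) \subseteq \X^{\perp_1} \subseteq \beta(\X)^{\perp_1}$, so $\beta(\X)$ has enough injectives and every object has injective dimension at most one. Conversely, if $I \in \beta(\X)$ is injective, any inflation $I \rightarrowtail U'$ with $U' \in \add(U_\X)$ must split, so $I$ is a summand of $U'$ and thus lies in $\add(U_\X)$. The main delicate point I anticipate is ensuring that each triangle obtained via the octahedral axiom really is a conflation in $\K_\Lambda$ rather than a triangle merely in $\D^b(\Mod\Lambda)$, and that WIC applies correctly to the lifted morphism.
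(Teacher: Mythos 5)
Your proof is correct, but it reaches the key conflation $X \rightarrowtail U_\X' \twoheadrightarrow W$ by a genuinely different route than the paper. The paper starts from \cref{ccone} ($\X = \CCone(\X\cap\Y,\X\cap\Y)$), which hands it a conflation $X \rightarrowtail U_0 \twoheadrightarrow U_1$ with $U_0, U_1 \in \add(U_\X\oplus U_\Y)$, and then uses the octahedron against $U_\Y^{\oplus m}\rightarrowtail U_\X^{\oplus m}\twoheadrightarrow \Lambda[1]^{\oplus m}$ to trade the $U_\Y$-summands for $U_\X$-summands; it also first proves $\thi(U_\X)=\U_\X^\vee$ via \cref{vee-wedge-desc} and only then sandwiches $\beta(\X)$ in between. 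You instead start from the canonical injective coresolution $X \rightarrowtail X^{-1}[1] \twoheadrightarrow X^0[1]$, lift along the approximation conflation of \cref{approxcotor} using $\EE(X,\Y)=0$ and WIC, and close the circle with the cheap inclusion $\thi(U_\X)\subseteq\beta(\X)$ (thickness plus $U_\X\in\beta(\X)$ from \cref{betainj}), so you never need \cref{ccone} or \cref{vee-wedge-desc}; both arguments then finish identically by identifying the cone inside $\beta(\X)\cap\Y=\add(U_\X)$ via \cref{betaY}. Your treatment of the injectivity statements via $\EE(\beta(\X),\add U_\X)\subseteq\EE(\X,\Y)=0$ is also cleaner than the paper's explicit splitting diagram. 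The only imprecision is the phrase ``a suitable direct sum of copies'' of $U_\Y\rightarrowtail U_\X\twoheadrightarrow\Lambda[1]$: a direct sum of copies ends in $\Lambda[1]^{\oplus m}$, not in the summand $I^0=X^{-1}[1]$; either pass to the corresponding direct summand of that conflation (minimal right $\X$-approximations are additive in the target, so its terms stay in $\add U_\X$ and $\add U_\Y$), or simply compose $X\rightarrowtail X^{-1}[1]$ with the split inflation $X^{-1}[1]\rightarrowtail\Lambda[1]^{\oplus m}$ and work with that coresolution instead. With that one sentence added, the argument is complete.
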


\begin{proof}
	Let $\U_\X = \add(U_\X)$ and take $U, U' \in \U_\X \subset \Y$. For every conflation $U \rightarrowtail U' \twoheadrightarrow U''$, we have that $U'' \in \Y$ since $\Y$ is closed under cones. Moreover, $U, U' \in \beta(\X)$ which is thick, so $U'' \in \beta(\X) \cap \Y$, the later being equal to $\U_\X$ by \cref{betaY}. Thus, $\U_\X$ is a presilting subcategory that is closed under cones. By \cref{vee-wedge-desc} we get that $$\thi (U_\X) = \U_\X^\vee.$$
	On the other hand, \cref{ccone} tells us that $\X = \CCone(\U, \U)$. So for every $X \in \beta (\X) \subset \X$ there exists a conflation 
	\[X \rightarrowtail U_0 \twoheadrightarrow U_1\]
	where $U_i \in \U$ for $i = 0,1$. We know that there exists $U^\X_0 \in \U_\X$ and $U_0^\Y \in \U_\Y$ such that $U_0 \simeq U_0^\X \oplus U_0^\Y$. Since $U_0^\Y$ is in $\add(\U_\Y)$, there exists $V \in \add(\U_\Y)$  and $m \in \mathbb{Z}_{\geq 0}$ such that $U_0^\Y \oplus V \simeq U_\Y^{\oplus m}$. We get a conflation $X \rightarrowtail U_0^\X \oplus U_\Y^{\oplus m} \twoheadrightarrow U_1 \oplus V$. Applying the octahedral axiom, we get the commutative diagram
	\begin{center}
		\begin{tikzcd}[ampersand replacement=\&, row sep=6ex]
			X \arrow[r] \arrow[d, equal] \& U_0^\X\oplus U_\Y^{\oplus m} \arrow[r] \arrow[d, tail, "{\left(\begin{smallmatrix} \Id_{U^\X_0} & 0 \\ 0 & u^{\oplus m} \end{smallmatrix}\right)}"] \& U_1 \oplus V \arrow[d, tail]\\
			X \arrow[r, tail] \& U_0^\X \oplus U_\X^{\oplus m} \arrow[r, two heads] \arrow[d, two heads] \& C \arrow[d, two heads] \\
			\& \Lambda[1]^{\oplus m} \arrow[r, equal] \&\Lambda[1]^{\oplus m}
		\end{tikzcd}
	\end{center}
	Since $X, U_0^\X \oplus U_\X^{\oplus m} \in \beta(X)$, the complex $C$ must lie in $\beta(\X)$, since it is a thick subcategory of $\K_\Lambda$. Moreover, $C \in \Y$, because $\Y$ is closed under extensions and contains $U, V$ and $\Lambda[1]$. This implies that $C \in \beta(\X) \cap \Y = \U_\X$. In particular, $X \in \CCone(\U_\X, \U_\X)$. We conclude that $\beta(\X) \subset \CCone(\U_\X, \U_\X) \subset \U_\X^\vee = \thi(U_\X)$. Since $\thi(U_\X)$ is the smallest thick subcategory containing $U_\X$, $\beta(\X) =  \CCone(\U_\X, \U_\X) = \thi(U_\X)$.
	\smallskip
	We know show that any $U \in \U_\X$ is an injective object in $\beta(\X)$. Consider a conflation $U \rightarrowtail Y \twoheadrightarrow X$ with $Y,X \in \beta(\X)$, then there must exists a conflation $X \rightarrowtail U' \twoheadrightarrow U''$ with $U', U'' \in \U_\X$. We can find $A \in \K_\Lambda$ such that the follow diagram 
	\begin{center}
		\begin{tikzcd}
			U \arrow[r, tail, "f"] \arrow[d, equal] & Y \arrow[r, two heads] \arrow[d, tail, "g"] & X \arrow[d, tail] \\
			U \arrow[r, tail, "f'"] & A \arrow[r, two heads] \arrow[d, two heads] & U' \arrow[d, two heads]\\
			& U'' \arrow[r, equal] &U'' 
		\end{tikzcd}.
	\end{center}
commutes. Since $\EE(U,U') =0$, the second line splits and $A \in \U_\X$. That is, there exists $h : A \rightarrow U$ such that  $h \circ f' = \Id_U$, which in turn implies that $(h\circ g) \circ f = h\circ (f\circ g) = h \circ f' = \Id_U$. We conclude that $f$ is a section, so $U \rightarrowtail Y \twoheadrightarrow X$ splits, and $U$ must be injective. That all injective objects are in $\U_\X$ follows directly from the fact that $\beta(\X) = \CCone(\U_\X, \U_X)$. This finishes the proof.
\end{proof}

\begin{proposition}\label{lfinitethi}
	Let $\T \subset \K$ be a thick subcategory of an hereditary extriangulated category. Then, $\T$ has enough injectives if and only if there exist a presilting subcategory $\U \subset \K$ such that $\U$ is closed under cones and $\T = \thi(\U)$.
\end{proposition}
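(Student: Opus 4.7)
The plan is to take $\U = \inj \T$ for the forward direction; in both directions the key tool is \cref{vee-wedge-desc}(1), which identifies $\thi(\U) = \U^\vee$ whenever $\U$ is presilting and closed under cones. The hereditary hypothesis $\EE^i = 0$ for $i \geq 2$ is exactly what is needed to propagate injectivity and $\EE^1$-vanishing through the standard $\Hom$--$\EE$ long exact sequence in extriangulated categories, and will be used repeatedly.

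For $(\Rightarrow)$, let $\U$ be the full subcategory of injective objects of $\T$. Presiltingness in $\K$ is immediate since $\T$ is extension-closed in $\K$ (so $\EE_\K(U,U') = \EE_\T(U,U') = 0$ for $U, U' \in \U$) and heredity kills higher $\EE$. To see $\U$ is closed under cones, take a conflation $U \rightarrowtail U' \twoheadrightarrow C$ with $U, U' \in \U$: thickness gives $C \in \T$, and for every $T \in \T$ the piece
\[\EE(T, U') \longrightarrow \EE(T, C) \longrightarrow \EE^2(T, U)\]
of the long exact sequence has both outer terms zero, so $C$ is injective in $\T$ and lies in $\U$. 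For $\T \subset \thi(\U)$, each $T \in \T$ admits (by enough injectives) a conflation $T \rightarrowtail U_0 \twoheadrightarrow T_1$ with $U_0 \in \U$; the symmetric sequence $\EE(X, U_0) \to \EE(X, T_1) \to \EE^2(X, T)$ applied to arbitrary $X \in \T$ shows $T_1$ is injective in $\T$ as well, hence $T_1 \in \U$ and $T \in \CCone(\U, \U) \subset \U^\vee = \thi(\U)$ by \cref{vee-wedge-desc}(1). The reverse inclusion $\thi(\U) \subset \T$ is automatic from $\U \subset \T$ and thickness of $\T$.

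For $(\Leftarrow)$, assume $\U$ is presilting and closed under cones, and set $\T = \thi(\U) = \U^\vee$. The definition $\U^\vee_m = \CCone(\U, \U^\vee_{m-1})$ immediately gives that every $T \in \T$ admits an inflation into some $U_0 \in \U$, taking the trivial conflation when $T \in \U = \U^\vee_0$ itself. It remains to show that each $U \in \U$ is injective in $\T$: applying $\Hom(-, U)$ to the same conflation $T \rightarrowtail U_0 \twoheadrightarrow T'$ for arbitrary $T \in \T$ yields $\EE(U_0, U) \to \EE(T, U) \to \EE^2(T', U)$, whose outer terms vanish by the presilting hypothesis and heredity respectively, forcing $\EE(T, U) = 0$. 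The only delicate point in the argument is extracting these three-term exact sequences from the extriangulated $\Hom$--$\EE$ formalism combined with $\EE^2 = 0$; once granted, everything reduces to routine manipulation with the $\U^\vee$-filtration and \cref{vee-wedge-desc}.
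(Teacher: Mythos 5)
Your proof is correct, and the forward direction follows the paper's strategy (take $\U$ to be the injectives of $\T$, check presilting and closure under cones, then identify $\T$ with $\U^\vee$), but your tactics differ in an interesting way, and your converse is a genuinely different argument. Where you invoke the three-term exact sequence $\EE(-,U')\to\EE(-,C)\to\EE^2(-,U)$ to get closure under cones, the paper argues more elementarily: the conflation $U\rightarrowtail U'\twoheadrightarrow C$ splits because $U$ is injective, so $C$ is a direct summand of $U'$ and hence injective. For the inclusion $\T\subset\U^\vee$, your observation that heredity forces the cosyzygy $T_1$ to be injective (so every object of $\T$ has injective dimension at most one and $\T=\CCone(\U,\U)$) is actually a useful addition — the paper asserts $\T=\U^\vee$ without spelling out why the coresolution terminates. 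In the converse, the paper never touches $\EE^2$: it takes a conflation $U\rightarrowtail X\twoheadrightarrow Y$, composes with an inflation $X\rightarrowtail U'$ coming from $\T=\U^\vee$, uses the octahedral axiom and closure under cones to see that $U\rightarrowtail U'$ has cone in $\U$, deduces from presiltingness that this composite inflation splits, and concludes that the original inflation is a section. Your route instead reads off $\EE(T,U)=0$ directly from the long exact sequence attached to $T\rightarrowtail U_0\twoheadrightarrow T'$, with outer terms killed by presiltingness and heredity. Both are valid; yours is shorter, while the paper's stays within the bare extriangulated axioms (ET4 plus splitting). The one point you should make explicit is the "delicate point" you yourself flag: the continuation of the $\Hom$--$\EE$ sequence past $\EE^1$ to $\EE^2$ is not part of the axioms of an extriangulated category and must be supplied by whichever framework defines the higher extensions (it is available for $\K_\Lambda$ as an extension-closed subcategory of $\D^b(\Mod\Lambda)$, and in the setting of the references the paper uses for hereditary extriangulated categories, but it deserves a citation rather than being treated as automatic).
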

\begin{proof}
	Suppose $\T$ has enough injectives and let $\U = \inj \K$, then $\T = \U^\vee$. Since $\EE(\T, \U) = 0$, in particular we have that $\EE(\U, \U) = 0$, so $\U$ is presilting. For any conflation $U \rightarrowtail U' \rightarrow X$ with $U, U' \in \U$, we must have that $X \in \T$ since $\T$ is thick. Moreover, $U$ is injective, so the conflation must split and $X \in \U$, which in turn implies that $\U$ is closed under cones. We conclude that $\T = \U^\vee = \thi(\U)$. 
	
	Conversely, suppose that $\T = \thi(\U)$, where $\U$ is presilting and closed under cones. To prove the result, it suffices to show that every $U \in \U$ is injective. Indeed, since $\U$ is closed under cones, we have that $\U^\vee = \thi(\U) = \T$, so any object in $\T$ can be approximated by objects in $\U$. Let $U \in \U$ and take a conflation $U \rightarrowtail X \twoheadrightarrow Y$. Since $\T = \U^\vee$, there exist a conflation $X \rightarrowtail U' \twoheadrightarrow X'$ with $U' \in \U$ and $X' \in \T$. Then, there exist $A \in \T$ and a commutative diagram 
	\begin{center}
		\begin{tikzcd}
			U \arrow[r, tail, "f"] \arrow[d, equal] & X \arrow[r, two heads] \arrow[d, tail, "g"] & Y \arrow[d, tail] \\
			U \arrow[r, tail, "f'"] & U' \arrow[r, two heads] \arrow[d, two heads] & A \arrow[d, two heads]\\
			& U'' \arrow[r, equal] &U'' 
		\end{tikzcd}.
	\end{center}
 where the second line is a conflation. But $\U$ is closed under cones, so $A \in \U$. Moreover, $\U$ is presilting, so the second line must split, which implies that $U \rightarrowtail X \twoheadrightarrow Y$ does as well. We conclude that $U$ is injective.
\end{proof}

\begin{lemma}\label{betasurj}
	Let $\T$ be a thick subcategory of $\K_\Lambda$ with enough inectives, then
	$$\beta(\iota(\T)) = \T.$$
\end{lemma}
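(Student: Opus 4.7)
My plan is to prove the two inclusions separately. The easier inclusion $\T \subseteq \beta(\iota(\T))$ will follow from the octahedral axiom: any $T \in \T$ lies in $\iota(\T)$ via the identity inflation $T \xrightarrow{\Id} T$, and given a conflation $T \rightarrowtail X' \twoheadrightarrow X''$ with $X' \in \iota(\T)$, I would pick an inflation $X' \rightarrowtail C$ with $C \in \T$ and form the composite $T \rightarrowtail X' \rightarrowtail C$. Thickness of $\T$ ensures the cofiber $Z$ of the composite lies in $\T$, and the octahedral axiom then produces a conflation $X'' \rightarrowtail Z \twoheadrightarrow W$ with $Z \in \T \subseteq \iota(\T)$, exhibiting $X'' \in \iota(\T)$ as required.

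For the harder inclusion $\beta(\iota(\T)) \subseteq \T$, the plan is to realise $\iota(\T)$ as the $\X$-side of a complete cotorsion pair and apply \cref{desc_beta}. By \cref{lfinitethi}, I would write $\T = \thi(\U)$ with $\U = \inj \T$ presilting and closed under cones; let $U$ be a basic additive generator of $\U$. The central construction is a Bongartz-type silting completion of $U$: take the minimal right $\add(U)$-approximation $\pi \colon U' \to \Lambda[1]$. A direct mapping-cone computation (the cone of $\pi$ sits in $\K^{[-2,-1]}$, so its shift lies in $\K_\Lambda$) shows that $\pi$ is a deflation, yielding a conflation $V \rightarrowtail U' \twoheadrightarrow \Lambda[1]$ with $V \in \add(U)^{\perp_1}$ via \cref{approx-orth}. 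A sequence of long-exact-sequence arguments, combining the universal property of $\pi$, the presilting property of $\U$, and the orthogonality condition on $V$, will establish that $U \oplus V$ is presilting; since $\Lambda[1] \in \thi(U \oplus V)$ from the above conflation, $\thi(U \oplus V) = \K_\Lambda$, so $U \oplus V$ is silting.

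Letting $(\X, \Y) = \Xi(U \oplus V)$ be the associated complete cotorsion pair via \cref{cotorsionsilting}, \cref{approximations} identifies $U_\X = U'$, so \cref{desc_beta} yields $\beta(\X) = \thi(U')$. To finish, I would show $\thi(U') = \thi(U) = \T$: any indecomposable summand $U_i$ of $U$ absent from $U'$ must satisfy $\Hom(U_i, \Lambda[1]) = 0$, forcing $U_i \cong [0 \to P]$ for some $P \in \proj \Lambda$; closure-under-cones on $\U$ applied to the shift conflation $[0 \to P] \rightarrowtail 0 \twoheadrightarrow [P \to 0]$ places $[P \to 0]$ in $\U$, and since $[P \to 0]$ has nonzero maps to $\Lambda[1]$ it must appear in $U'$, after which the same shift conflation recovers $[0 \to P]$ inside $\thi(U')$. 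Finally \cref{betainj} gives $\iota(\beta(\X)) = \X$, so $\X = \iota(\T)$ and $\beta(\iota(\T)) = \beta(\X) = \T$. I anticipate the main technical hurdle being the verification that $U \oplus V$ is presilting, in particular the vanishing $\EE(V, V) = 0$, which hinges on a lifting argument through the universal property of $\pi$ to show that any map $V \to \Lambda[1]$ factors as $V \to U' \to \Lambda[1]$.
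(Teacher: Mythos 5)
Your overall route coincides with the paper's: extract from \cref{lfinitethi} a presilting $\U=\add(U)$ closed under cones with $\T=\thi(\U)$, complete $U$ to a silting object $U\oplus V$ via the cocone of the minimal right $\add(U)$-approximation $\pi\colon U'\to\Lambda[1]$, pass to the complete cotorsion pair $\X=(\add(U\oplus V))^{\vee}$, and finish with \cref{desc_beta} and \cref{betainj}. Reconstructing the Bongartz completion by hand instead of citing it is fine (the factorisation argument you sketch for $\EE(V,V)=0$, together with the long exact sequences giving $\EE(V,U)=0$, does work), your identification of $U_\X$ with $U'$ via \cref{approximations} is correct once one observes that $\pi$ is also a right $\add(U\oplus V)$-approximation, and using \cref{betainj} to deduce $\iota(\T)=\X$ from $\beta(\X)=\T$ is a clean variant of the paper's two-inclusion argument. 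Your first inclusion $\T\subseteq\beta(\iota(\T))$ is correct but redundant, since the second half already yields equality.

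The genuine gap is in your proof that $\thi(U')=\thi(U)$. You assert that an indecomposable summand $U_i$ of $U$ absent from $U'$ must satisfy $\Hom(U_i,\Lambda[1])=0$, but this is not a property of minimal right approximations: an indecomposable of the approximating subcategory can admit nonzero maps to the target and still fail to occur in the minimal approximation, namely when all such maps factor through the other summands (already for $Q\colon 1\to 2$ in $\Mod \Bbbk Q$, the minimal right $\add(P_1\oplus P_2)$-approximation of $P_1$ is $\Id_{P_1}$, although $\Hom(P_2,P_1)\neq 0$). You give no argument ruling this out, and without it the reduction to projective stalks $[0\to P]$ collapses. The paper closes exactly this point by a different argument that uses closure of $\add(U)$ under cones in an essential way: a summand $W$ of $U$ not in $\add(U')$ must be a summand of the cocone $V$, and composing the inflations $W\rightarrowtail V\rightarrowtail U'$ yields a conflation $W\rightarrowtail U'\twoheadrightarrow A$ whose cone $A$ lies in $\add(U)$ by closure under cones; the presilting property then splits this conflation, so $W\in\add(U')$, a contradiction, whence $\add(U')=\add(U)$. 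Some such use of the closure-under-cones hypothesis is unavoidable here, since for a general presilting $U$ one really can have $\add(U')\subsetneq\add(U)$ (e.g.\ $U=[0\to P]$ projective, where $U'=0$).
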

\begin{proof}
	Let $\T$ a thick subcategory of $\K_\Lambda$ with enough inectives, by \cref{lfinitethi} we know that there exists a basic presilting object $U$ such that $\U = \add(U)$ is closed under cones and $\T = \thi (\U)$. Consider now its Bongartz completion \cite{aihara2012silting} $\overline{U} = U' \oplus V$ given by the conflation 
	\begin{equation}\label{bongcom}
		V_0 \rightarrowtail U_0 \twoheadrightarrow \Lambda[1]
	\end{equation}
	where the deflation $U_0 \twoheadrightarrow \Lambda[1]$ is a minimal $U$-right approximation of $\Lambda[1]$, $\add U_0 = \add U' \subset \add U$ and $\add V_0 =\add V$ with $V$ and $U'$ basic. Recall that $\overline{U}$ is silting and that $U$ is a direct summand of $\overline{U}$. By construction, $U_0 \twoheadrightarrow \Lambda[1]$ is also a minimal $\overline{U}$-right approximation and $\add U' \cap \add V = \{0\}$. Now let $W \in \add U / \add U'$, such that $W$ is indecomposable. Since $W \in \add \overline{U}= \add U' \sqcup \add V$, there exists $W' \in \add \overline{U}$ such that $V_0 = W \oplus W'$. We can find $A \in \K_\Lambda$ and a commutative diagram 
	\begin{center}
		\begin{tikzcd}
			W \arrow[r, tail] \arrow[d, equal] & V_0 \arrow[r, two heads] \arrow[d, tail] & W' \arrow[d, tail] \\
			W \arrow[r, tail] & U_0 \arrow[r, two heads] \arrow[d, two heads] & A \arrow[d, two heads]\\
			& \Lambda[1] \arrow[r, equal] & \Lambda[1] 
		\end{tikzcd}.
	\end{center}
	such that the second line is a conflation. But $W$ and $U_0$ lie in $\add U$ wich is closed under cones, so $A \in \add U$. Since $U$ is presilting, the second line must split, in particular $W \in \add U' \cap \add V = \{0\}$. We conclude that $\add U' = \add U$. 
	
	Now, let $\X = (\add \overline{U})^\vee$, by \cref{cotorsionsilting} and \cref{approximations}, we know that $(\X, \X^{\perp_1})$ is a cotorsion pair and that the deflation in the conflation~(\ref{bongcom}) is a minimal $\X$-right approximation of $\Lambda[1]$. Since $\add U_0 = \add U$, we have that 
	\begin{equation}\label{eq1}
		 \beta(\X) = \CCone(U,U) = \thi (U) = \T. 
	\end{equation}
	Finally, we know that $\iota(\T)$ is resolving, that is closed under extensions, direct summands and cocones. Since $V \in \iota(\T)$, then $\X = (\add \overline{U})^\vee \subset \iota(\T)$. Moreover, $\T = \CCone(U,U) \subset (\add \overline{U})^\vee$ and both subcategories are extension-closed, so \cref{iota} and \cref{lemmaiota} imply that $\iota(\T) \subset \iota((\add \overline{U})^\vee) = \iota(\X) \subset \X$. This implies that 
	\begin{equation}\label{eq2}
		\X = (\add \overline{U})^\vee = \iota(\T)
	\end{equation}
	Putting (\ref{eq1}) and (\ref{eq2}) together, we get that $$\beta(\iota(\T)) = \beta(\X) = \T$$
	which gives the result. 
	
\end{proof}
 
\subsection{Linking thick and wide subcategories}\label{stabinK}
The connections between $\tau$-tilting theory and stability conditions have been studied by a vast number of authors in the last two decades, resulting in a direct bridge between s$\tau$-tilting modules, torsion classes and semistable subcategories. In this section we propose a notion of semistability for objects in $\K_\Lambda = \K^{[-1,0]}(\proj \Lambda)$ that will allow us to construct a bridge between the bijections of \cref{theo-mod} and those of \cref{cotorsion-thick}.

\begin{definition}[\textbf{$M$-semistability}]\label{defMsemistab1}
Let $M \in\Mod\Lambda$ and $X =$ \begin{tikzcd}[cramped, sep=small] 
		X^{-1} \arrow[d, "x"] \\ X^0 \end{tikzcd} $\in \K^{[-1,0]}(\proj \Lambda)$. We say that $X$ is \textbf{$M$-semistable} if the map $\Hom(X^0, M) \xrightarrow{x^*} \Hom(X^{-1}, M)$ is an isomorphism of $\Bbbk$-vector spaces. In particular, since $\langle [X] , [M ]\rangle = \dim_\Bbbk(\Hom(X^0,M)) - \dim_\Bbbk(\Hom(X^{-1}, M))$, if $X$ is $M$-semistable, $\langle [X] , [M ]\rangle = 0$.
\end{definition}

Note that this definition does not depend on the choice of representative of $X$ in its isomorphism class inside $\K_\Lambda$ thanks to \cref{definf}. In \cref{s_geometry} we will discuss the geometric origin of this notion, but for now, let us proceed to the proof of the main theorem of this section. 

\begin{definition}
	Let $\mathcal{H}$ be a subcategory of $\Mod \Lambda$. We define $\Tt(\mathcal{H})$ to be the full subcategory of $\mathcal{K}^{[-1,0]} (\proj \Lambda)$ whose objects are all complexes $X$ such that $X$ is $N$-semistable $ \forall \ N \in \mathcal{H}$. Similarly, if $\mathcal{C} \subset \mathcal{K}^{[-1,0]} (\proj \Lambda)$, we define $\Ww(\mathcal{C})$ as the full subcategory of modules $N$ such that all objects in $\C$ are $N$-semistable.
\end{definition}

\begin{proposition}\label{wide-thick} Let $\Lambda$ be a finite-dimensional $\Bbbk$- algebra, then
	\begin{enumerate}
		\item $\forall \ \mathcal{H} \subset \Mod \Lambda$, $\Tt(\mathcal{H}) \subset \K^{[-1,0]} (\proj \Lambda)$ is a thick subcategory.
		\item $\forall \C \subset \K^{[-1,0]} (\proj \Lambda)$, $\Ww(\mathcal{C}) \subset \Mod \Lambda$ is a wide subcategory.
		\item For any subcategories $\mathcal{H} \subset \Mod \Lambda$ and $\mathcal{C} \subset \K^{[-1,0]}(\proj \Lambda)$,
		\begin{gather*}
			\mathcal{H} \subset \Ww(\Tt(\mathcal{H})),\\
			\mathcal{C} \subset \Tt(\Ww(\mathcal{C})).
		\end{gather*}
		\item For any subcategories $\mathcal{H} \subset \Mod \Lambda$ and $\mathcal{C} \subset \K^{[-1,0]}(\proj \Lambda)$.
		\begin{gather*}
			\Tt(\mathcal{H}) = \Tt(\wide(\mathcal{H})) \\
			\Ww(\C) = \Ww(\thi(\C))
		\end{gather*}
	\end{enumerate} 
\end{proposition}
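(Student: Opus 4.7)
My plan is to first reformulate the $M$-semistability condition homologically: for $X = (X^{-1} \xrightarrow{x} X^0) \in \K^{[-1,0]}(\proj \Lambda)$ and $M \in \Mod \Lambda$, the complex $\Hom^\bullet_\Lambda(X, M)$ is concentrated in degrees $0$ and $1$ with differential $x^\ast$, hence
\[
\Hom_{\D^b(\Mod \Lambda)}(X, M) = \ker x^\ast, \qquad \EE(X,M) = \Coker x^\ast,
\]
and $\Hom_{\D^b}(X, M[i]) = 0$ whenever $i < 0$ or $i \geq 2$. So $X$ is $M$-semistable if and only if $\Hom_{\D^b}(X,M) = \EE(X, M) = 0$. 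This bounded vanishing will be the key point: applying $\Hom_{\D^b}(-, N)$ to any triangle in $\K_\Lambda$, or $\Hom_{\D^b}(X, -)$ to any short exact sequence in $\Mod\Lambda$, yields a six-term exact sequence whose only possibly nonzero terms are Homs and $\EE$'s.

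For part (1), given a conflation $X \rightarrowtail Y \twoheadrightarrow Z$ in $\K_\Lambda$ and $N \in \mathcal{H}$, the six-term sequence obtained from $\Hom_{\D^b}(-, N)$ immediately forces the $(\Hom, \EE)$ pair of the third object to vanish whenever the pairs of the other two vanish; closure under summands is trivial. For part (2), take $f \colon N_1 \to N_2$ with $N_1, N_2 \in \Ww(\C)$ and factor it through $I := \Imm f$, obtaining two short exact sequences. For any $X \in \C$, applying $\Hom_{\D^b}(X, -)$ to each sequence yields a six-term sequence; a quick diagram chase using $\Hom_{\D^b}(X, N_i) = \EE(X, N_i) = 0$ shows that $\ker f$, $I$ and $\Coker f$ are all $X$-semistable, hence lie in $\Ww(\C)$. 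Closure of $\Ww(\C)$ under extensions comes from an analogous six-term argument: both endpoint pairs vanish forces the middle pair to vanish.

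Parts (3) and (4) are then formal consequences. Part (3) is immediate from the definitions: $N \in \mathcal{H}$ implies every $X \in \Tt(\mathcal{H})$ is $N$-semistable, giving $\mathcal{H} \subset \Ww(\Tt(\mathcal{H}))$, and the dual inclusion is symmetric. For part (4), note both $\Tt$ and $\Ww$ are inclusion-reversing. By part (2), $\Ww(\Tt(\mathcal{H}))$ is a wide subcategory containing $\mathcal{H}$, so it contains $\wide(\mathcal{H})$; applying $\Tt$ gives $\Tt(\mathcal{H}) \subset \Tt(\wide(\mathcal{H}))$, while the reverse inclusion follows from monotonicity and $\mathcal{H} \subset \wide(\mathcal{H})$. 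The identity $\Ww(\C) = \Ww(\thi(\C))$ is obtained dually using part (1).

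The main obstacle is the bookkeeping in part (2): one must verify that the two six-term exact sequences coming from the decomposition $N_1 \twoheadrightarrow I \rightarrowtail N_2$ consistently kill both the Hom and $\EE$ terms for $\ker f$ and $\Coker f$. This is where the hereditariness of $\K_\Lambda$ (via the truncation of the long exact sequence) does the real work; once the sequence has only six terms, the chase becomes routine.
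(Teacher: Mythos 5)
Your proposal is correct, and for the substantive parts (1) and (2) it takes a genuinely different route from the paper. The paper works with explicit representatives: it lifts a conflation of $\K_\Lambda$ to a conflation in $\C^{[-1,0]}(\proj\Lambda)$ after adding a contractible summand \begin{tikzcd}[cramped, sep=tiny] P \arrow[d, equal]\\ P \end{tikzcd}, applies $\Hom(-,M)$ to get a commutative diagram with two exact rows, and then treats extensions, cones, cocones and summands by separate ad hoc arguments --- for cones it combines injectivity of $z^*$ with the Euler-form identity $\langle [Z],[M]\rangle=0$ to upgrade to bijectivity, and for direct summands it invokes the inequality $\langle [Y],[M]\rangle\geq 0$ for inflations, a forward reference to \cref{Mimplies[M]} in \cref{s_geometry}. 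Your reformulation of $M$-semistability as the simultaneous vanishing of $\Hom_{\D^b}(X,M)$ and $\EE(X,M)$ (valid because $X$ is a bounded complex of projectives, so $\Hom_{\D^b}(X,M[i])$ is computed by the two-term Hom complex with differential $x^*$) replaces all of this by the truncated long exact sequences in $\D^b(\Mod\Lambda)$: the vanishing of $\Hom_{\D^b}(X,M[i])$ for $i\notin\{0,1\}$ — precisely the hereditariness you flag — cuts every long exact sequence down to six terms, and all four closure properties in (1), as well as kernels, images, cokernels and extensions in (2), follow uniformly by sandwiching. This is cleaner, handles (1) and (2) symmetrically, and removes both the representative-choosing of \cref{definf} and the forward reference; what the paper's version buys instead is an explicitly Euler-form-flavoured computation that foreshadows the numerical semistability discussion of \cref{ssforpp}. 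Your treatment of (3) and (4) is the same formal Galois-connection argument as the paper's.
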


\begin{proof}
	We only prove (1) for $\mathcal{H} = \{M\}$ with $M \in \Mod \Lambda$. The result follows noting that $\Tt(\mathcal{H}) = \bigcap_{M \in \mathcal{H}} \Tt(M)$. The statement in (2) follows using similar arguments. 
	
	\textit{Closure under extensions: } Let $X \rightarrowtail Y \twoheadrightarrow Z$ be a conflation and suppose $X$ and $Z$ are in $\Tt(M)$. As we have seen in \cref{definf}, we can find $P \in \proj  \Lambda$ such that $X \rightarrowtail Y \oplus \text{\begin{tikzcd}[cramped, sep=tiny]  P \arrow[d, equal]\\ P \end{tikzcd}} \twoheadrightarrow Z$ is a conflation in $\mathcal{C}^{[-1,0]}(\proj \Lambda)$. Applying $\Hom(-,M)$ to the associated exact sequences, we get the commutative diagram
	\begin{center}
		\begin{equation}\label{cdiag}
			\begin{tikzcd}[cramped, column sep=small]
				0 \arrow[r] & \Hom(Z^0, M) \arrow[r] \arrow[d, "z^*"] & \Hom(Y^0 \oplus P, M) \arrow[r] \arrow[d, "(y\oplus \Id_P)^*"] &\Hom(X^0, M) \arrow[r] \arrow[d, "x^*"] & 0 \\
				0 \arrow[r] & \Hom(Z^{-1}, M) \arrow[r]& \Hom(Y^{-1} \oplus P, M) \arrow[r] & \Hom(X^{-1}, M) \arrow[r] & 0
			\end{tikzcd}
		\end{equation}	
	\end{center}
	where $z^*$ and $x^*$ are isomorphisms by hypothesis. Since $(y\oplus \Id_P)^* = y^* \oplus \Id_{\Hom(P,M)}$, $y^*$ is an isomorphism as well.
	
	\textit{Closure under cones and cocones: }Take a conflation as before and suppose that $X$ and $Y$ are $M$-semistable. In particular, $0 = \langle [Y], [M] \rangle = \langle [X], [M] \rangle + \langle [Z] , [M] \rangle = 0 + \langle [Z], [M] \rangle$. As before, there exists $P \in \proj \Lambda$ and a commutative diagram like (\ref{cdiag}). Since $(y \oplus \Id_P)^*$ and $x^*$ are isomorphisms we can deduce that $z^*$ is injective. Moreover, $z^*$ is a linear map between vector spaces of same dimension, so it must be bijective. The proof of $\Tt(M)$ being closed by cocones is dual. 
	
	\textit{Closure under direct summands:} Let $ X \in \Tt(M)$ such that $X \simeq X' \oplus X''$. Since we have inflations $X' \rightarrowtail X$ and $X'' \rightarrowtail X$, then $\langle [X'], [M]\rangle, \langle [X''], [M]\rangle \geq 0$ (see \cref{Mimplies[M]} for more details). But $0 = \langle [X], [M] \rangle = \langle [X'], [M]\rangle + \langle[X''], [M]\rangle$, so both terms must be equal to $0$. Take now the conflation $X'' \rightarrowtail X \twoheadrightarrow X'$ and $P \in \proj \Lambda$ such that we have a commutative diagram like (\ref{cdiag}). This time around, $x^* \oplus \Id_P^*$ is an isomorphism, which implies that $(x')^*$ is bijective since it is injective and $\langle [X'], [M]\rangle = 0$.
	
	We proceed to prove (3). Since for any subcategory $\mathcal{H} \subset \Mod \Lambda$, $\Tt(\mathcal{H}) = \bigcap_{M \in \mathcal{H}} \Tt(M)$, the map $\Tt$ reverses inclusions. Take $M \in \mathcal{H}$, then all $X \in \Tt(\mathcal{H})$ satisfy that $X$ is $M$-semistable, that is, $M \in \Ww(\Tt(\mathcal{H}))$. The rest of the statement follows from similar arguments.
	
	Lastly, consider $\C \subset \K^{[-1,0]}(\proj \Lambda)$. Since $\Ww$ reverses inclusions and $\C \subset \thi(\C)$, then $\Ww(\thi(\C)) \subset \Ww(\C)$. We have seen that $\Tt(\Ww(\C))$ is a thick subcategory that contains $\C$, so $\thi(\C) \subset \Tt(\Ww(\C)) $. Applying (3) to $\mathcal{H} = \Ww(C)$, we have the following inclusions : 
	
		\[\Ww(\C) \subset \Ww(\Tt(\Ww(\C))) \subset \Ww (\thi(\C)) \subset \Ww(\C).\]
	Thus $\Ww(\C) = \Ww(\thi(\C))$. That $\Tt(\mathcal{H}) = \Tt(\wide(\mathcal{H}))$ for any subcategory $\mathcal{H}$ of $\Mod \Lambda$ follows from the same argument. This proves (4).
	
\end{proof}

For any extriangulated category $\K$, we will denote by $\fthi \K$ the set of all thick subcategories of $\K$ that have enough injectives. We are now ready to state the main theorem of this section. 

\begin{theorem}\label{theo-thick-wide}
	Let $\Lambda$ be a finite-dimensional $\Bbbk$-algebra and take $\K_\Lambda$ as before. There exist well defined maps
	\begin{center}
		\begin{tikzcd}
			\wide \Lambda \arrow[r, shift left=.75ex, "\Tt"] & \thi \K_\Lambda \arrow[l, shift left=.75ex, "\Ww"] 
		\end{tikzcd}
	\end{center}
	such that, when restricted to thick subcategories with enough injectives and lef finite wide subcategories, they make the following diagram commute
	\begin{center}
		\begin{tikzcd}[cramped,column sep=large, row sep=scriptsize]
			& & \silt \K_\Lambda \arrow[dr, "\thi(U_\rho)"] & & \\
			&  \cctor\K_\Lambda \arrow[ur, "\Psi"]  \arrow[rr, "\beta", crossing over]& & \fthi \K_\Lambda& \\ 
			\mathclap{\K^{[-1,0]}(\proj \Lambda)} & & & &  \\
			{} \arrow[rrrr, dashed, no head, crossing over]& & & & {} \\
			& & & & \mathclap{\Mod \Lambda} \\
			& \ftor\Lambda \arrow[from= uuuu, "\Phi", crossing over, crossing over clearance=7ex] & & \fwide\Lambda \arrow[from= ll, "\alpha"] \arrow[from= uuuu, crossing over, crossing over clearance=7ex,"\Ww"] & 
		\end{tikzcd}
	\end{center} 
	In particular, $\Ww$ and $U \in \silt \K_\Lambda \mapsto \thi(U_\rho) \in \fthi \K_\Lambda$ are bijective.
\end{theorem}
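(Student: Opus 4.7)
The plan has three main parts: well-definedness and functoriality of $\Tt$ and $\Ww$, commutativity of the upper horizontal triangle, and commutativity of the outer square (from which bijectivity follows formally).

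Well-definedness is already established in Proposition~\ref{wide-thick}: $\Tt$ lands in thick subcategories and $\Ww$ in wide subcategories, and the Galois-type containments $\mathcal{H} \subseteq \Ww(\Tt(\mathcal{H}))$ and $\C \subseteq \Tt(\Ww(\C))$ hold. For the upper triangle, given $(\X, \Y) \in \cctor \K_\Lambda$ with silting object $U = \Psi((\X, \Y)) = U_\X \oplus U_\Y$, Lemma~\ref{desc_beta} yields $\beta(\X) = \thi(U_\X)$. Comparing the conflation $U_\Y \rightarrowtail U_\X \twoheadrightarrow \Lambda[1]$ from Corollary~\ref{approxcotor} with the defining triangle~\eqref{approx-silt} of $U_\rho$, and invoking Remark~\ref{approximations} (which identifies the minimal right $\X$-approximation of $\Lambda[1]$ with the minimal right $U$-approximation), one reads off $\add U_\X = \add U_\rho$ and therefore $\beta = \thi(\cdot_\rho) \circ \Psi$. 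Since $\beta$ and $\Psi$ are bijections by Theorems~\ref{cotorsion-thick} and~\ref{cotorsionsilting}, so is $U \mapsto \thi(U_\rho) : \silt \K_\Lambda \to \fthi \K_\Lambda$.

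The crux is verifying the outer square, namely $\Ww(\beta(\X)) = \alpha(\Phi((\X, \Y)))$, or equivalently $\Ww(\thi(U_\rho)) = M_\rho^\perp \cap \Fac M$ where $M = H^0(U)$. By Proposition~\ref{wide-thick}(4) this reduces to $\Ww(U_\rho) = M_\rho^\perp \cap \Fac M$, so the task is an explicit summand-by-summand analysis of $N$-semistability for $U_\rho = U_{M_\rho} \oplus P_0[1]$, with $U_{M_\rho}$ a minimal projective presentation of $M_\rho$ and $P_0$ the deleted part of the corresponding $\tau$-tilting pair. The summands $P_0[1]$ contribute exactly the condition $N \in P_0^\perp$. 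For each indecomposable summand $V$ of $U_{M_\rho}$ with $H^0(V) = V'$, I would rewrite $N$-semistability as the vanishing of $\Hom_{\D^b(\Mod \Lambda)}(V, N)$ and of $\Hom_{\D^b(\Mod \Lambda)}(V, N[1])$, and then use the triangle $H^{-1}(V)[1] \to V \to V' \to H^{-1}(V)[2]$ together with the Auslander--Reiten formula $\Ext^1_\Lambda(V', N) \cong D\overline{\Hom}(N, \tau V')$ to match these vanishings with $\Hom(V', N) = 0$ and $\overline{\Hom}(N, \tau V') = 0$. The main technical obstacle is controlling the extra term $\Hom(H^{-1}(V), N)$ arising from the long exact sequence because $V$ is a presentation rather than a resolution; I would handle it using the silting vanishing $\Hom_{\D^b(\Mod \Lambda)}(U_\rho, U_\rho[1]) = 0$ together with $N \in \Fac M$, which together should force that term to vanish exactly when the other vanishings hold.

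Finally, combining the upper triangle with the outer square gives the full commutativity of the displayed diagram. Because $\Phi$, $\Psi$, $\beta$, and $\alpha$ all restrict to bijections between the corresponding finiteness classes by the cited theorems, the restriction of $\Ww$ to $\fthi \K_\Lambda$ is forced to be a bijection (being equal to $\alpha \circ \Phi \circ \beta^{-1}$ on this restricted domain), whose inverse must agree with the restriction of $\Tt$ thanks to the Galois containments from Proposition~\ref{wide-thick}(3).
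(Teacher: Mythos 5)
Your architecture coincides with the paper's almost step for step: well-definedness is \cref{wide-thick}, the upper triangle is \cref{desc_beta} together with \cref{approximations} (giving $\add U_\X = \add U_\rho$), and the whole theorem reduces to the single identity $\Ww(\beta(\X)) = \alpha(H^0(\Y))$, from which bijectivity of $\Ww$ and of $U \mapsto \thi(U_\rho)$ follows formally exactly as you say. The only divergence is how that central identity is established. The paper computes nothing here: it writes $H^0(\Y) = \Fac H^0(U_\X \oplus U_\Y)$ using \cref{cotorsion_torsion}, quotes \cite[Lemma 3.8]{marks2017torsion} and \cite[Lemma 3.5]{yurikusa2018wide} for $\alpha(\Fac H^0(U_\X\oplus U_\Y)) = \Ww(U_\X)$, and finishes with $\Ww(\thi(U_\X)) = \Ww(U_\X)$ from \cref{wide-thick}(4). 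You propose instead to re-derive the semistability characterization by hand via the triangle $H^{-1}(V)[1] \to V \to H^0(V)$ and the Auslander--Reiten formula; this is essentially a self-contained reproof of the cited lemmas (and of \cref{widesemistable}, which the paper proves separately for other purposes), and is a legitimate alternative if carried out.

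However, as sketched, the computation has a real gap at its hardest point. Semistability of the presentation summands of $U_\rho$ yields $\Hom(M_\rho, N) = 0$ together with the vanishing of $\operatorname{coker}(v^*) \cong D\overline{\Hom}(N, \tau M_\rho)$, i.e.\ of the \emph{stable} Hom, while membership in $\Fac M = {}^{\perp}(\tau M) \cap P^{\perp}$ requires the vanishing of the full $\Hom(N, \tau M)$, including the $\tau M_\lambda$-part. Passing from $\overline{\Hom}(N, \tau M_\rho) = 0$ (in the presence of $N \in M_\rho^{\perp} \cap P^{\perp}$) to $\Hom(N, \tau M) = 0$ is precisely the nontrivial content of the two lemmas the paper cites. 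Your proposed fix for the ``extra term'' cannot work as stated: the silting vanishing $\Hom(U_\rho, U_\rho[1]) = 0$ does not involve $N$, and you may not invoke $N \in \Fac M$ in the inclusion $\Ww(U_\rho) \subseteq M_\rho^{\perp} \cap \Fac M$, since there $N \in \Fac M$ is the conclusion, not a hypothesis. Either supply this step or cite the lemmas as the paper does; everything else in your proposal is sound.
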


\begin{proof}
The first part of the statement follows from \cref{wide-thick}. We show that the center square of the diagram is commutative, that the upper triangle is as well follows from \cref{desc_beta}. Let $(\X, \Y)$ be a complete cotorsion pair. We prove that $\Ww(\beta(\X)) = \alpha(H^0(\Y))$. \cref{cotorsion_torsion} implies that $H^0(\Y) = \Fac(H^0(U_\X\oplus U_\Y))$. By \cite[Lemma 3.8]{marks2017torsion} and \cite[Lemma 3.5]{yurikusa2018wide} we have that $\alpha(\Fac(H^0(U_\X\oplus U_\Y))) = \Ww(U_\X)$. Moreover, $\Ww(\beta(\X)) = \Ww(\thi (U_\X))$ by \cref{desc_beta}, and $\Ww(U_\X) = \Ww(\thi(U_\X))$ by \cref{wide-thick} (4). Putting all these equalities together we get that 
	$$\Ww(\beta(\X)) = \Ww(\thi (U_\X)) = \Ww(U_\X) = \alpha(\Fac(H^0(U_\X\oplus U_\Y))) = \alpha(H^0(\Y))$$
	which gives us the result. 
\end{proof}
	
\begin{example}
Let $Q$ be the quiver $1 \xrightarrow{\alpha} 2 \xrightarrow{\beta} 3$. Then the Auslander-Reiten quiver of $\Mod \Bbbk Q$ is the following
\begin{center}
	\begin{tikzpicture}
		\node (P3) at (0,0) {$P_3 = I_1$};
		\node (P2) at (-1,-1) {$P_2$}; \node (I2) at (1,-1) {$I_2$};
		\node (P1) at (-2,-2) {$P_1$}; \node (S2) at (0,-2) {$S_2$}; \node (I3) at (2,-2) {$I_3$};
		\draw[->] (P1) --(P2); \draw[->] (P2) --(P3); \draw[->] (P3) --(I2); \draw[->] (I2) --(I3); \draw[->] (P2) --(S2); \draw[->] (S2) --(I2);
	\end{tikzpicture} .
\end{center}
All minimal projective presentations of indecomposable modules are indecomposable objects in $\K_{\Bbbk Q} = \K^{[-1,0]}(\proj \Bbbk Q)$, as are the objects $P \rightarrow 0$ where $P$ is an indecomposable projective module. Then, the AR quiver of $\K_{\Bbbk  Q}$ is given by
\begin{center}
	\begin{tikzpicture}
		\node (P3) at (0,0) {$0 \rightarrow P_3$}; \node (P1s) at (2,0) {$P_1 \rightarrow 0$};
		\node (P2) at (-1,-1) {$ 0 \rightarrow P_2$}; \node (I2) at (1,-1) {$P_1 \xrightarrow{\beta \alpha} P_3$}; \node (P2s) at (3,-1) {$P_2 \rightarrow 0$};
		\node (P1) at (-2,-2) {$0 \rightarrow P_1$}; \node (S2) at (0,-2) {$P_1 \xrightarrow{\alpha} P_2$}; \node (I3) at (2,-2) {$P_2 \xrightarrow{\beta} P_3$}; \node (P3s) at (4,-2) {$P_3 \rightarrow 0$};
		
		\draw[->] (P1) --(P2); \draw[->] (P2) --(P3); \draw[->] (P3) --(I2); \draw[->] (I2) --(I3); \draw[->] (P2) --(S2); \draw[->] (S2) --(I2); \draw[->] (I2) -- (P1s); \draw[->] (I3) -- (P2s); \draw[->] (P1s) -- (P2s); \draw[->] (P2s) -- (P3s);
	\end{tikzpicture} .
\end{center}

In Table~\ref{A3}, we show all silting objects, their respective cotorsion pairs, thick subcategories, wide subcategory and torsion class given by the bijections in \cref{main-coro}. The dots correspond to the objects depicted in the AR quiver of $\K_{\Bbbk  Q}$ (or $\Mod \Bbbk Q$), and the shaded areas correspond to the subcategory additively generated by the dots they contain. In the second column, the blue shaded area in each figure depict the subcategory $\X$, while the orange shaded area plays the role of $\Y$ for the cotorsion pairs $(\X, \Y)$ they illustrate. 
\end{example}

\input{figures.tex}

\begin{example}[\textbf{$\Ww$ is not a bijection in general}]
Consider now the Kronecker quiver \[Q = \begin{tikzcd} 1 \arrow[r,shift left=.75ex,"\alpha"] \arrow[r,shift right=.75ex,"\beta", swap] & 2 \end{tikzcd}. \]
Let $\C$ be thick subcategory of $\K_{\Bbbk Q}$ whose objects are the projective presentations of regular modules. In particular, any object $X \in \C$ satisfies that $[X] = n[P_1] - n[P_2] = (n, -n)$ for some $n \in \mathbb{Z}_{> 0}$. Let $M \neq 0$ be an indecomposable module in $\Ww(\C) \subset \Mod \Bbbk Q$ with minimal projective resolution $X_M \in \K_{\Bbbk Q}$. Since $\langle [X] , M\rangle = 0$ for all $X \in \C$, $M$ cannot be pre-projetive or pre-injective. If $M$ is regular, then $X_M \in \C$, but $M$ cannot be $X_M$-semistable. We conclude that $\Ww(\C) =\{0\} = \Ww(\K_{\Bbbk Q})$, and $\Ww$ is not a bijection in general. 
    
\end{example}

\section{Geometric interpretation}\label{s_geometry}

The goal of this section is to introduce and compare three notions of semistability in $\K_\Lambda = \K^{[-1,0]}(\proj \Lambda)$:
\begin{itemize}
	\item[$-$] $M$-semistability, defined by non-vanishing of the determinantal semi-invariant associated to $M \in \Mod \Lambda$ but easily interpreted in representation theoretical terms (\cref{defMsemistab1}, \cref{defMsemistab2});
	\item[$-$] geometric semistability, from GIT (\cref{geometricss1}, \cref{gemetricss2});
	\item[$-$] numerical semistability, directly inspired from the work of A. King (\cref{numericalss}).
\end{itemize}

That $M$-semistability implies geometric semistability is straightforward. We show that both notions imply numerical semistability (\cref{Mimplies[M]}), but that the converse does not hold in general (\cref{ex_numssnotgeoss}). Previous work by K.~Igusa, K.~Orr, G.~Todorov and J.~Weyman entails that geometric semistability implies $M$-semistability for acyclic algebras \cite[Theorem 6.4.1]{igusa2009cluster}. This result, together with \cref{wide-thick} and \cref{main-coro}, shows that $M$-semistability mirrors important properties of King's semistability for modules, namely the equivalence between this notion and the one arising from GIT, the fact that categories of semistable modules are wide, and its intricate relation to $\tau$-tilting theory. The results of this section should be interpreted in the context of the tropical duality between g-vectors and c-vectors \cite{derksen2010quivers, najera2013cvectors, fu2017cvectors, treffinger2019cvectors}.

\vspace{2mm}
Throughout this section we fix $\theta^{-1}, \theta^0 \in \mathbb{Z}^n_{\geq 0}$ as well as $X^{-1} = \bigoplus_{i=0}^n P_i^{\theta_i^{-1}}$ and $X^0 = \bigoplus_{i=0}^n P_i^{\theta_{i}^0}$ two projective modules in $\Mod \Lambda$. Let  $R(X^{-1},X^0) = \Hom_{\Lambda}(X^{-1}, X^{0})$, $A(X^{-1},X^0) = \En(X^{-1}) \times \En(X^{0})^{op}$, and $G(X^{-1},X^0) = \Aut(X^{-1})^{op} \times \Aut(X^{0}) \subset A(X^{-1},X^0)$. The group $G(X^{-1},X^0)$ acts on the affine space $R(X^{-1},X^0)$ via simultaneous multiplication: let $g = (g_{-1},g_0) \in G(X^{-1},X^0)$, where $g_{-1} \in \Aut(X^{-1})$, $g_0 \in \Aut(X^0)$ and $x : X^{-1} \rightarrow X^0$, then $g \cdot x = g_0 \cdot x \cdot g_{-1}$. Note that this is a well defined action since we chose to work in $\Aut(X^{-1})^{op}$. If we were to consider $\Aut(X^{-1})$ instead, some sign conventions would have to be adjusted. When the context allows it, we will write $R$ instead of $R(X^{-1}, X^0)$ and do the same for the groups $A$ and $G$.

Since we consider modules over a finite-dimensional $\Bbbk$-algebra $\Lambda$, $A = A(X^{-1},X^0)$ is also finite-dimensional and thus its radical $N$ coincides with its nil-radical. By the Wedderburn-Artin theorem, we have that 
$$ A/N \cong \bigoplus_{i = 1}^n \left( M_{\theta_i^{-1}}(D_i) \times M_{\theta_i^0}(D_i)\right)$$
where $N = \rad(\En(X^{-1})) \times \rad(\En(X^{0}))$ and $D_i = \En(P_i)/\rad(\En(P_i)) \simeq \Bbbk$. 
Using the fact that $f \in A$ is invertible if and only if its image in $A/N$ is invertible, we get that 
$$ G = G(X^{-1},X^0) = (1_A + N) \rtimes \left(\prod_{i = 1}^n \left( GL_{\theta^{-1}_i}(D_i) \times GL_{\theta^0_i}(D_i)\right)\right).$$
Let $U = 1_A + N$, then $U$ is a normal subgroup of $G$ and, by definition, all its elements are unipotent. Since $\Lambda$ is finite-dimensional, there exists $m \in \mathbb{Z}_{>0}$ and a series of subgroups
$$0 \unlhd 1_A + N^m \unlhd \cdots \unlhd 1_A + N^2 \unlhd 1_A + N  = U$$ where $1+ N^i$ is a normal subgroup of $1 + N^{i-1}$, and such that every quotient is abelian. This shows that $U$ is solvable and hence, is the unipotent radical of $G$, since it is closed and connected. 
Any algebraic group $G$ over $\mathbb{C}$ satisfies that $G = U \rtimes G_{red}$ where $U$ is its unipotent radical and $G_{red}$ is reductive. If $\Bbbk = \mathbb{C}$, we then get that $ G(X^{-1}, X^0)_{red} \cong \prod_{i = 1}^n \left( GL_{\theta^{-1}_i}(D_i) \times GL_{\theta^0_i}(D_i)\right)$, where $D_i \cong \mathbb{C}$. From now on, we suppose that $\Bbbk \simeq \mathbb{C}$.

A \textbf{character} of $G$ is a morphism of algebraic groups $\chi : G \rightarrow \mathbb{C}^*$. If $G$ is unipotent, every character is trivial. In particular, when $G =  U \rtimes G_{red}$, the set of characters of $G$ can be identified with that of $G_{red}$. In our case, every character $\chi : \prod_{i = 1}^n \left( GL_{\theta^{-1}_i}(\mathbb{C}) \times GL_{\theta^0_i}(\mathbb{C})\right) \rightarrow \mathbb{C}^*$ is given by $\chi\left((g_{-1}^i,g_{0}^i)_{1 \leq i \leq n} \right) = \prod_{i=1}^{n} \det(g_{-1}^i)^{d_i^{-1}} \cdot \det(g_{0}^i)^{d_i^{0}}$, and thus, the group of characters of $G$ is isomorphic to $\mathbb{Z}^{2n}$. For $\bar{d} = (d^{-1}, d^0) \in \mathbb{Z}^{2n}$ we will write $\chi_{\bar{d}}$ for the character given by the previous formula.  

Dually, a group morphism $\lambda : \mathbb{C}^* \rightarrow G$ is called a \textbf{one-parameter subgroup} or a \textbf{co-character} of $G$. They are all of the form $u \hat{\lambda} u^{-1}$ for $u \in U$ and $\hat{\lambda} : \mathbb{C}^* \rightarrow G_{red}$. Indeed, let $H = ker(\lambda) = \lambda^{-1}(1_G)$. $H$ is a closed subgroup of $\mathbb{C}^*$, hence, $\lambda(\mathbb{C}^*) \cong \mathbb{C}^* / H$ is a reductive subgroup of $G = U \rtimes G_{red}$ since $\mathbb{C}^*$ is. As we are working in characteristic 0, by~\cite[Proposition 4.2]{hochschild2012basic}, there exists $u \in U$ such that $u^{-1}\lambda(\mathbb{C}^*)u$ $\leq G_{red}$. Then $\hat{\lambda} = u^{-1}\lambda u : \mathbb{C}^* \rightarrow G_{red}$ satisfies the property. Moreover, since $G_{red}$ is a product of $GL_k(\mathbb{C})$'s, all one parameter subgroups are of the form $ \lambda = u \tilde{\lambda} u^{-1},$ where $\tilde{\lambda}$ is a one-parameter subgroup with image in a maximal torus of $G_{red}$.

The composition $\chi \circ \lambda : \mathbb{C}^* \rightarrow \mathbb{C}^*$ gives us a paring $\langle -,- \rangle$ between  the set of one parameter subgroups and the character group. Indeed, since every algebraic group automorphism of $\mathbb{C}^*$ is of the form $t \mapsto t^m$ for some $m \in \mathbb{Z}$, we define $\langle \lambda, \chi \rangle$ to be the integer $m$ such that $\chi \circ \lambda (t) = t^m$.

\subsection{Geometric Invariant Theory}\label{ss_git}
Geometric Invariant Theory (GIT) was developed by D. Mumford as a method for constructing quotients by group actions on algebraic varieties. One of the key tools in this theory is the notion of semi-invariant and semistability. In this section we study those tools for the action of $G(X^{-1}, X^0)$ over the vector space $R(X^{-1}, X^0)$ and prove a technical result that will be used in the rest of the section. For more on the generalities of GIT, see~\cite{mumford1994geometric}. 

Let $R= R(X^{-1}, X^0)$ and $G = G(X^{-1}, X^0)$. A \textbf{semi-invariant} $f \in \mathbb{C[R]}$ of weight $\chi \in \Hom(G, \mathbb{C}^*)$ is a regular function such that $f(g \cdot x) = \chi(g) f(x) $ for all $g \in G$ and all $x \in R$. For a non-trivial character $\chi$, define the graded ring $$SI(R)^{G,\chi} = \bigoplus_{n \in \mathbb{Z}_{\geq 0}} \mathbb{C}[R]^{G,\chi^n},$$ where the $\mathbb{C}[R]^{G,\chi^n}$ denote the set of semi-invariant functions over $R$ of weight $\chi^n$. 

\begin{definition}\label{geometricss1}
	Let $x \in R$ and $\chi$ a character of $G$. We say that $x$ is \textbf{$\chi$-semistable} if there exists $m \in \mathbb{Z}_{> 0}$ and $f \in \mathbb{C}[R]^{G,\chi^m}$ such that $f(x) \neq 0$. We denote by $R^{\chi, ss}$ the open subset of semistable points. 
\end{definition}

The points of the scheme $\text{Proj}(SI(R)^{G,\chi})$ should correspond (up to GIT- equivalence) to orbits of $\chi$-semistable points. However, since in our setting $G$ is not reductive in general, the ring $SI(R)^{G,\chi}$ of semi-invariants is not necessarily finitely generated, and thus $\text{Proj}(SI(R)^{G,\chi})$ is not a variety in general, even as $R$ is an affine space. Moreover, the Hilbert-Mumford numerical criterion to determine weather a point $x$ is $\chi$-semistable does not necessarily hold. However, we seek to describe features of the characters for which semistable points exist.

Let $G_0 = \{g \ | \ g \cdot x = x \quad \forall x \in R \}$, and suppose that $x \in R$ is $\chi_{\bar{d}}$-semistable for some $\bar{d} \in \mathbb{Z}^{2n}$. Let $f \in \mathbb{C}[R]^{G, \chi^m_{\bar{d}}}$ such that $f(x) \neq 0$ with $m \geq 1$, then $f(g \cdot x ) = \chi(g)^m_{\bar{d}} f(x) = f(x)$ for any $g \in G_0$. That is $\chi^m_{\bar{d}}(G_0) \equiv 1$. In particular for $ \Delta = \left\{ (t^{-1} \cdot \Id_{\theta^{-1}_i}, t \cdot \Id_{\theta^0_i}) \ | \ t \in \mathbb{C}^* \right\}_{1 \leq i \leq n} \cong \mathbb{C}^* \subseteq G_0$, we must have that 

\begin{gather*}\chi^m(\Delta) = \left( \prod_{i =1}^n \det(t^{-1} \cdot \Id_{\theta^{-1}_i})^{d_i^{-1}} \det(t \cdot \Id_{\theta^0_i})^{d_i^{0}} \right)^m = \\ t^{m\left( -\sum_{i=1}^n  \theta^{-1}_i d_i^{-1} + \sum_{i=1}^n  \theta^0_i d_i^{0} \right)} = 1 \end{gather*}

which in turn implies that  $$-\sum_{i=1}^n  \theta_i^{-1} d_i^{-1} + \sum_{i=1}^n  \theta^0_i d_i^{0} = \langle(-[X^{-1}],[X^0]) ,(d^{-1},d^0) \rangle= \langle (-[X^{-1}], [X^0]), \chi \rangle = 0.$$

Let $x \in R = R(X^{-1},X^0)$ and consider its orbit $G \cdot x \subset R$. Note that $G \cdot x$ can be identified with the isomorphism class of $x$ as an object in $C^{[-1,0]}(\proj\Lambda)$. The following proposition will gives us a link between inflations in $C^{[-1,0]}(\proj\Lambda)$ and semistability. 

\begin{proposition}\label{naive}
	Let $x \in R$ with associated $X =$ \begin{tikzcd}[cramped, sep=small]X^{-1} \arrow[d, "x"] \\ X^0 \end{tikzcd} $\in C^{[-1,0]}(\proj\Lambda)$. If $x$ if $\chi$-semistable, then 
	\begin{enumerate}
		\item $\langle (-[X^{-1}], [X^0]), \chi \rangle = 0$;
		\item For any inflation $Y \rightarrowtail X$ in $C^{[-1,0]}(\proj\Lambda)$, we must have that $$\langle (-[Y^{-1}], [Y^0]), \chi \rangle \geq 0.$$
	\end{enumerate}
\end{proposition}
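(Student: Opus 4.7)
The plan is to treat the two parts as essentially parallel: both are consequences of evaluating a semi-invariant $f$ of weight $\chi^m$ with $f(x) \neq 0$ along carefully chosen one-parameter subgroups.

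Part (1) is already implicit in the paragraph preceding the statement. Concretely, I will apply $f$ to the one-parameter subgroup $\Delta(t) = (t^{-1} \Id_{X^{-1}}, t \cdot \Id_{X^0}) \in G$. Since $\Delta(t) \cdot x = t \cdot x \cdot t^{-1} = x$, we have $\Delta \subset G_0$. Hence $f(\Delta(t)\cdot x) = f(x) \neq 0$ forces $\chi^m(\Delta(t)) = 1$, and computing $\chi_{\bar d}(\Delta(t)) = t^{m(\sum_i [X^0]_i d_i^0 - \sum_i [X^{-1}]_i d_i^{-1})}$ gives exactly $\langle(-[X^{-1}],[X^0]), \chi\rangle = 0$.

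For part (2) I would use the following standard Hilbert--Mumford--type argument, which does not require $G$ to be reductive: if some one-parameter subgroup $\lambda : \mathbb{C}^\ast \to G$ satisfies that $\lim_{t\to 0} \lambda(t)\cdot x$ exists in $R$, then $f(\lambda(t)\cdot x) = \chi^m(\lambda(t))\, f(x) = t^{m\langle\lambda,\chi\rangle} f(x)$ is a regular function on $\mathbb{C}^\ast$ that extends to $t = 0$, so $\langle\lambda,\chi\rangle\ge 0$. The work is thus to cook up, from an inflation $Y \rightarrowtail X$ in $\mathcal{C}_\Lambda$, a $\lambda \in G$ with a good limit whose pairing with $\chi$ computes $\langle(-[Y^{-1}],[Y^0]),\chi\rangle$.

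Here is how I would construct $\lambda$. By definition of an inflation in $\mathcal{C}_\Lambda$, we get short exact sequences $0 \to Y^i \to X^i \to Z^i \to 0$ for $i=-1,0$ with $Z \in \mathcal{C}_\Lambda$, so all terms are projective and the sequences split. Fixing $\Lambda$-linear splittings $X^i \cong Y^i \oplus Z^i$ compatible with the inflation, the differential $x : X^{-1} \to X^0$ takes the block form
\[
x = \begin{pmatrix} y & u \\ 0 & z \end{pmatrix},
\qquad y : Y^{-1} \to Y^0,\ z : Z^{-1} \to Z^0,\ u : Z^{-1} \to Y^0.
\]
Define $\lambda(t) = \bigl(t^{-1}\Id_{Y^{-1}} \oplus \Id_{Z^{-1}},\; t\,\Id_{Y^0} \oplus \Id_{Z^0}\bigr) \in G$. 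A direct matrix computation gives $\lambda(t)\cdot x = \left(\begin{smallmatrix} y & tu \\ 0 & z \end{smallmatrix}\right)$, so $\lim_{t\to 0}\lambda(t)\cdot x = \left(\begin{smallmatrix} y & 0 \\ 0 & z \end{smallmatrix}\right)$ (corresponding to $Y\oplus Z$) exists.

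Finally I would compute the pairing. Grouping the splittings by vertex, $\det(g_{-1}(t)|_{X^{-1}_i}) = t^{-[Y^{-1}]_i}$ and $\det(g_0(t)|_{X^0_i}) = t^{[Y^0]_i}$, so
\[
\chi_{\bar d}(\lambda(t)) = \prod_i t^{-[Y^{-1}]_i d_i^{-1} + [Y^0]_i d_i^0} = t^{\langle(-[Y^{-1}],[Y^0]),\chi_{\bar d}\rangle},
\]
which identifies $\langle\lambda,\chi\rangle$ with $\langle(-[Y^{-1}],[Y^0]),\chi\rangle$. Combined with the Hilbert--Mumford inequality from the previous paragraph, this yields (2). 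The only subtle point is verifying that $\lambda$ actually lies in $G$ (i.e., that $t^{-1}\Id_{Y^{-1}}\oplus\Id_{Z^{-1}}$ is a $\Lambda$-linear automorphism of $X^{-1}$), which is immediate from the fact that the chosen splitting is a decomposition as $\Lambda$-modules. I expect no deeper obstacle beyond bookkeeping for vertex-by-vertex determinants.
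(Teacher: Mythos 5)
Your proof is correct, and the overall strategy is the same as the paper's: part (1) via the central subgroup $\Delta \subseteq G_0$, and part (2) via the easy direction of the Hilbert--Mumford inequality ($\langle \lambda,\chi\rangle \geq 0$ whenever $\lim_{t\to 0}\lambda(t)\cdot x$ exists, valid with no reductivity assumption) applied to a cocharacter built from the inflation. The differences are in execution. The paper first establishes a general dictionary between one-parameter subgroups admitting a limit at $x$ and filtrations $0 \rightarrowtail \cdots \rightarrowtail X_{\leq n} \rightarrowtail \cdots \rightarrowtail X$ in $\C^{[-1,0]}(\proj\Lambda)$, expresses $\langle\lambda,\chi\rangle$ as a sum over the filtration, and only then specializes to the two-step filtration $0 \rightarrowtail Y \rightarrowtail X$; you skip the general analysis and write down the single cocharacter the proposition needs (the general dictionary is only used later, in the remark about a partial converse). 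Your choice of weights --- $t^{\mp 1}$ on the $Y$-blocks, $1$ on the complementary blocks --- is moreover the orientation for which the limit manifestly exists: with $x = \left(\begin{smallmatrix} y & u \\ 0 & z\end{smallmatrix}\right)$ one gets $\lambda(t)\cdot x = \left(\begin{smallmatrix} y & tu \\ 0 & z\end{smallmatrix}\right) \to \left(\begin{smallmatrix} y & 0 \\ 0 & z\end{smallmatrix}\right)$, whereas the paper's $\lambda_Y$ puts the $t^{\mp1}$ on the complementary blocks; the two cocharacters differ by the central $\Delta$, which fixes $x$ and pairs trivially with $\chi$ by part (1), but which exchanges the limits at $t\to 0$ and $t\to\infty$, so your version is the one for which the limit computation goes through verbatim. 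The remaining checks (the splittings exist because all terms are projective, $\lambda$ is $\Lambda$-linear because the splitting is one of $\Lambda$-modules, and the vertex-by-vertex determinants give $\langle\lambda,\chi\rangle = \langle(-[Y^{-1}],[Y^0]),\chi\rangle$) are all correctly handled.
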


\begin{proof}
	Suppose $x$ is $\chi$-semistable with respect to the $G$-action. By the previous discussion we have (1). Let $f$ be a $\chi^m$ a semi-invariant for $m \geq 1$ such that $f(x) \neq 0$. If $\lambda$ is a one-parameter subgroup of $G$, we must have that for every $t \in \mathbb{C}^*$,
	$$f(\lambda(t) \cdot x) = \chi^m(\lambda(t)) f(x) = t^{m \langle \lambda, \chi \rangle} f(x).$$
	Suppose that $\lim_{t\to 0} \lambda(t) \cdot x$ exists and it is equal to $x' \in R$, then $$f(x') = \lim_{t\to 0} t^{m \langle \chi, \lambda \rangle} f(x).$$ Since $f(x) \neq 0$, we must have that $\langle \lambda, \chi \rangle \geq 0$. The statement in (2) will follow from noting that one-parameter subgroups such that $\lim_{t\to 0} \lambda(t) \cdot x$ exists correspond to inflations of $X$ in $\C^{[-1,0]}(\proj \Lambda)$. As we have seen before, $ \lambda(t) = u \tilde{\lambda}(t) u^{-1}$ where $\tilde{\lambda}$ is a one-parameter subgroup with image in a maximal torus of $G$. Explicitly, 
	
	\begin{gather*}
		\lambda(t) = (\lambda_{-1}(t), \lambda_{0}(t)) = (g_{-1} \ \tilde{\lambda}_{-1}(t) \ (g_{-1})^{-1}, g_0 \ \tilde{\lambda}_{0}(t) \ g_0^{-1}) = \\ \displaybreak[0] = \left(g_{-1} \begin{psmallmatrix}
			{\begin{psmallmatrix}
					t^{\lambda^{-1}_{1,1}} & \cdots & 0 \\ 
					\vdots & \ddots & \vdots \\
					0 & \cdots & t^{\lambda^{-1}_{\theta^{-1}_1,1}}
			\end{psmallmatrix}} & 0 & \cdots & 0 &0 \\  
			\vdots & \ddots & \vdots & \ddots &\vdots \\  
			0 & \cdots & {\begin{psmallmatrix}
					t^{\lambda^{-1}_{1,i}} & \cdots & 0 \\ 
					\vdots & \ddots & \vdots \\
					0 & \cdots & t^{\lambda^{-1}_{\theta^{-1}_i,i}}
			\end{psmallmatrix}} & \cdots & 0\\ 
			\vdots & \ddots & \vdots & \ddots &\vdots \\  
			0 & 0 & \cdots & 0 & {\begin{psmallmatrix}
					t^{\lambda^{-1}_{1,n}} & \cdots & 0 \\ 
					\vdots & \ddots & \vdots \\
					0 & \cdots & t^{\lambda^{-1}_{\theta^{-1}_n,n}}
			\end{psmallmatrix}}  
		\end{psmallmatrix} (g_{-1})^{-1}  , \right. \\ \displaybreak[0] \left. \quad g_0 \begin{psmallmatrix}
			{\begin{psmallmatrix}
					t^{\lambda^0_{1,1}} & \cdots & 0 \\ 
					\vdots & \ddots & \vdots \\
					0 & \cdots & t^{\lambda^0_{\theta^0_1,1}}
			\end{psmallmatrix}} & 0 & \cdots & 0 &0 \\
			\vdots & \ddots & \vdots & \ddots &\vdots \\
			0 & \cdots & {\begin{psmallmatrix}
					t^{\lambda^0_{1,i}} & \cdots & 0 \\ 
					\vdots & \ddots & \vdots \\
					0 & \cdots & t^{\lambda^0_{\theta^0_i,i}}
			\end{psmallmatrix}} & \cdots & 0\\
			\vdots & \ddots & \vdots & \ddots &\vdots \\
			0 & 0 & \cdots & 0 & {\begin{psmallmatrix}
					t^{\lambda^0_{1,n}} & \cdots & 0 \\ 
					\vdots & \ddots & \vdots \\
					0 & \cdots & t^{\lambda^0_{\theta^0_n,n}}
			\end{psmallmatrix}}
		\end{psmallmatrix} g_0^{-1} \right)
	\end{gather*}

	where $\lambda_{-1}(t), \ g_{-1} \in \Aut(X^{-1})$, and $\lambda_{0}(t), \ g_0 \in \Aut(X^{0})$ for every $t \in \mathbb{C}^*$. Here, $\lambda_{l,i}^{\varepsilon}$ is the weight corresponding to the $l$-th copy of the projective indecomposable $P_i$ inside of $X^\varepsilon$ with $\varepsilon \in \{-1, 0 \}$, $1 \leq i \leq n$ and $1 \leq l \leq \theta^\varepsilon_i$. We get that, for $\varepsilon \in \{-1, 0 \}$, $X^\varepsilon = \bigoplus_{m \in \mathbb{Z}} X_m^i$  where each $X_m^i$ is the direct sum of indecomposable projectives summands $Q$ of $X^\varepsilon$ such that $\lambda_\varepsilon(t)(Q) = t^m Q$. So, for any $m, n \in \mathbb{Z}$, we have the following commutative diagram : 
	
	\[\begin{tikzcd}[row sep = large, column sep= huge]
		X_n^{-1} \arrow[d, "t^{n+m} \left(\pi_m^0 \cdot \left.x\right|_{X_n^{-1}} \right)"] \arrow[r, hookrightarrow] & X^{-1\arrow[d, "\lambda_{0}(t) \cdot  x \cdot \lambda_{-1}(t)"]} \\
		X_m^{0} & X^0 \arrow[l,twoheadrightarrow, "\pi_{m}^0"]\\
	\end{tikzcd}\]
	
	Since the limit when $ t \to 0$ exists, then $\pi_m^0 \cdot \left. x\right|_{X_n^{-1}}$ must be zero when $n+m < 0$. Let $X_{\leq n}^{-1} = \bigoplus_{i \leq n} X_{-i}^{-1}$ and $X_{\leq n}^0 = \bigoplus_{i \leq n} X_{i}^{0}$. Then, for every $n \in \mathbb{Z}$, $x$ defines the objects $X_{\leq n}:=$\begin{tikzcd}[cramped, sep=small]X_{\leq n}^{-1} \arrow[d, "x_{\leq n}"] \\ X_{\leq n}^0 \end{tikzcd}, which make the following diagram commute
	\[\begin{tikzcd}[column sep = large]
		X^{-1}_{\leq n} \arrow[r, tail] \arrow[d, "x_{\leq n}"] & X^{-1} \arrow[d, "x"] \\
		X^{0}_{\leq n} \arrow[r, tail] & X^0 
	\end{tikzcd}\]
	Here $x_{\leq n} = \pi_m^0 \cdot \left.x\right|_{X_n^{-1}}$ when $m+n \geq 0$ and equals 0 otherwise. This gives us a sequence $0 \rightarrowtail \cdots \rightarrowtail X_{\leq n} \rightarrowtail X_{\leq n+1} \rightarrowtail \cdots \rightarrowtail X$ of inflations for $X$. Note that $[ X^{i} ]=[ \bigoplus X^{i}_{\leq n+1} / X^{i}_{\leq n} ] = \sum_{n \in \mathbb{Z}} [X^{i}_{\leq n+1} / X^{i}_{\leq n}]$ for $i \in \{ -1, 0\}$. For a projective module $Q$ and $1 \leq j \leq n$, denote by $[Q]_j$ the number of times the indecomposable projective $P_j$ appears as a direct summand of $Q$. We can express the value of $\langle \lambda, \chi \rangle$ as 
	\begin{gather*}
		\langle \lambda , \chi \rangle = \\ \displaybreak[0] \left( \sum_{j= 1}^n \theta_j^{-1} \left(\sum_{m \in \mathbb{Z}}(-m) [X^{-1}_{\leq m} / X^{-1}_{\leq m-1}]_j\right)  \right) + \left( \sum_{j= 1}^n \theta_j^{0} \left(\sum_{m \in \mathbb{Z}}m [X^{0}_{\leq m} / X^{0}_{\leq m-1}]_j\right)  \right) = \\ \displaybreak[0] 
		= \sum_{m \in \mathbb{Z}} m \langle \left( -[X^{-1}_{\leq m} / X^{-1}_{\leq m-1}] , [X^{0}_{\leq m} / X^{0}_{\leq m-1}] \right) , \chi \rangle = \\
		= \sum_{m \in \mathbb{Z}} \langle \left(-[X^{-1}_{\leq m}] ,  [X^{0}_{\leq m}] \right), \chi \rangle.
	\end{gather*}
	That is, the value of the paring between $\chi$ and $\lambda$ is given by the inner product between the associated integer vector of $\chi$ and the classes in $\mathcal{K}_0(\proj\Lambda)$ of the projective modules $X^i_{\leq m}$ for $i \in \{-1, 0\}$. 
	
	Given an object $Y = $\begin{tikzcd}[cramped, sep=small]Y^{-1} \arrow[d, "y"] \\ Y^0 \end{tikzcd} that is the source of an inflation to $X$, we construct a one-parameter subgroup $\lambda_{Y}$ such that it's associated filtration is $0 \hookrightarrow Y \hookrightarrow X$. Suppose $Y^{-1} = \bigoplus_{i=0}^n P_i^{\theta'^{-1}_i}$ and $Y^0 = \bigoplus_{i=0}^n P_i^{{\theta'_i}^0}$ with $\theta'^0_i \leq \theta^0_i$ and $\theta'^{-1}_i \leq \theta^{-1}_i$ for all $1 \leq i \leq n$. Up to isomorphism, we can suppose $Y^{i} \subseteq X^{i}$. Let $$\lambda_{Y}(t) = \left( \prod_{i = 1}^n \text{diag}_{\theta^{-1}_i}(1, \cdots 1, t^{-1}, \cdots t^{-1}), \prod_{i = 1}^n \text{diag}_{\theta^0_i}(1, \cdots 1, t, \cdots t) \right)$$ where each diagonal matrix has $\theta'^{-1}_i$ and $\theta'^0_i$ 1's respectively. Then $X^{-1}_{\leq 0} = Y^{-1}$, $X^{0}_{\leq 0} = Y^{0}$,  $X^{-1}_{\leq i} = X^{0}_{\leq i} = 0$ for all $i < 0$ and $X^{-1}_{\leq i} = X^{-1}$ and $X^{0}_{\leq i} = X^{0}$ for all $i > 0$. Since $x$ is semistable, 
	\begin{gather*}
		\langle \lambda_{Y}, \chi \rangle  = \sum_{i < 0}\langle \left(-[X^{-1}_{\leq i}] ,  [X^{0}_{\leq i}] \right), \chi \rangle + \langle \left(-[Y^{-1}] ,  [Y^{0}] \right) , \chi \rangle + \\ \sum_{i > 0}\langle \chi, \left(-[X^{-1}_{\leq i}] ,  [X^{0}_{\leq i}] \right) \rangle
		= 0 + \langle (-[Y^{-1}], [Y^0]), \chi \rangle + 0 \geq 0
	\end{gather*}
\end{proof}

\begin{remark}
	Note that if every inflation satisfies (2) from the previous proposition, then $\langle\lambda, \chi \rangle \geq 0$ for every one-parameter subgroup $\lambda$ such that the limit  $\lim_{t\to 0} \lambda(t) \cdot x$ exists. If $G$ were reductive, this would imply that $x$ is $\chi$-semistable as in \cite{King}.
\end{remark}

\subsection{Determinantal invariants}\label{ss_det}
To study whether \cref{naive} has a partial converse, one could try to explicitly describe its ring of semi-invariants $SI(R)^{G, \chi}$ for a given character $\chi$.

\begin{definition}[\cite{schofield1991semi}]
	Let $X^{-1}, X^0 \in \proj \Lambda$ and $M \in \Mod \Lambda$ such that $~\langle [X^0]-[X^{-1}], [M] \rangle =0$, that is, such that $\dim_\Bbbk(\Hom(X^{-1}, M)) = \dim_\Bbbk(\Hom(X^{0}, M)$. Let $R(X^{-1}, X^0)$ be as defined in the beginning of \cref{s_geometry}. We denote by $s(-,M)$ the regular function such that, for any $x \in R(X^{-1}, X^0)$ $$s(x,M) = \det \left(\Hom (X^0, M) \xrightarrow{- \circ x = x^*} \Hom(X^{-1}, M)\right).$$ We say that the map $s(-,M)$ is a \textbf{determinantal semi-invariant}. 
\end{definition}
\begin{remark}
	Let $M \in \Mod \Lambda$ and let $\dim M$ be its class in $K_0(\Mod \Lambda)$. Note that the value of the map $s(-,M)$ depends on the choice of basis for the $\Hom (X^0, M)$ and $\Hom(X^{-1}, M)$ spaces. Once we fix a basis for $\Hom(P_i, S_j)$ for all $1 \leq i, j \leq 0$, we know that for any $P \in \proj \Lambda$ with a given decomposition $P = \bigoplus_{i = 1}^n P_i^{e_i} \in \proj \Lambda$ and $M \in \Mod \Lambda$, then 
	$$\Hom(P, M) = \bigoplus_{i=1}^n \bigoplus_{j=1}^i \Hom(P_i, S_j)^{e_i (\dim M)_j}$$
	and thus basis is given by that of the $\Hom(P_i, S_j)$. However, we mostly care about the non-annihilation of these functions, and since base change doesn't affect this property, the choice of basis is mostly omitted. 
\end{remark}
\begin{proposition}~\cite[Proposition 5.13]{igusa2009cluster}\label{determinv}
	Let $X^{-1}, X^0 \in \proj \Lambda$ and $M \in \Mod \Lambda$. The regular function $s(-,M)$ is a $G(X^{-1}, X^0)$ semi-invariant over $R(X^{-1}, X^0)$ with associated character $\chi_{([M], [M])}$. 
\end{proposition}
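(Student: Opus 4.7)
The plan is a direct computation. For $g = (g_{-1}, g_0) \in G(X^{-1}, X^0)$ and $x \in R(X^{-1}, X^0)$, the formula $g \cdot x = g_0 \circ x \circ g_{-1}$ gives
$$(g \cdot x)^* \;=\; R_{g_{-1}} \circ x^* \circ R_{g_0} : \Hom(X^0, M) \longrightarrow \Hom(X^{-1}, M),$$
where $R_{g_\varepsilon}$ denotes the endomorphism of $\Hom(X^\varepsilon, M)$ given by precomposition by $g_\varepsilon$. Taking determinants, one obtains
$$s(g \cdot x, M) = \det(R_{g_{-1}}) \cdot \det(R_{g_0}) \cdot s(x, M),$$
so it suffices to prove that $\det(R_h) = \chi_{[M]}(h)$ for every $\varepsilon \in \{-1,0\}$ and $h \in \Aut(X^\varepsilon)$, where $\chi_{[M]}$ is viewed as a character of $\Aut(X^\varepsilon)$ via the dimension vector $[M]$.

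To establish this identity I would invoke the semidirect decomposition $\Aut(X^\varepsilon) = U_\varepsilon \rtimes \Aut(X^\varepsilon)_{red}$ recalled just before the statement, with $U_\varepsilon = 1 + \rad(\En(X^\varepsilon))$. For $h = 1 + n \in U_\varepsilon$, the endomorphism $R_h - \Id$ is $y \mapsto y \circ n$; nilpotence of $n$ forces $(R_h - \Id)^k = 0$ for some $k$, whence $\det(R_h) = 1$, consistent with $\chi_{[M]}$ being trivial on the unipotent radical. For the reductive part, write $h = (h_i)_i \in \prod_i GL_{\theta_i^\varepsilon}(\Bbbk)$. The decomposition $X^\varepsilon = \bigoplus_i P_i^{\theta_i^\varepsilon}$ induces $\Hom(X^\varepsilon, M) \cong \bigoplus_i (e_iM)^{\theta_i^\varepsilon}$; under this identification $R_h$ preserves the splitting and acts on the $i$-th summand as $h_i \otimes \Id_{e_iM}$. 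Since $\dim_\Bbbk e_iM = m_i := \langle [P_i], [M] \rangle$, the $i$-th block contributes $\det(h_i)^{m_i}$, so $\det(R_h) = \prod_i \det(h_i)^{m_i} = \chi_{[M]}(h)$.

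Combining the two contributions yields $s(g \cdot x, M) = \chi_{([M], [M])}(g) \cdot s(x, M)$, which is the claimed semi-invariance. The main subtlety lies in the reductive-part computation: one must verify that under the identification $\Hom(P_i^{\theta_i^\varepsilon}, M) \cong \Bbbk^{\theta_i^\varepsilon} \otimes e_iM$, right-precomposition by $h_i$ really does act as the external product $h_i \otimes \Id_{e_iM}$, so that the exponent $m_i$ (and not some other power) appears in the determinant; the rest of the argument is a formal unwinding of the definitions.
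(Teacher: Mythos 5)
Your proposal is correct and follows essentially the same route as the paper: reduce to computing the determinant of precomposition on each $\Hom(P_i^{\theta_i^\varepsilon},M)\cong M_i^{\theta_i^\varepsilon}$, where the block for $g^i_\varepsilon$ appears $\dim_\Bbbk M_i$ times, yielding the weight $([M],[M])$. The only (harmless) difference is that you verify explicitly that the unipotent radical contributes determinant $1$ via nilpotence of $R_h-\Id$, whereas the paper simply invokes the earlier observation that any character of $G$ factors through $G_{red}$.
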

\begin{proof}
	By hypothesis $\dim_\Bbbk \Hom(X^0,M) = \dim_\Bbbk \Hom(X^{-1},M)$, thus $s(s,M)$ is well defined for any $x \in R(X^{-1}, X^0)$. Let $g = (g_{-1}, g_0) \in G(X^{-1}, X^0) = \Aut(X^{-1}) \times \Aut(X^0)$, then 
	\begin{gather*}
		s(g \cdot x, M) =  \det \left(\Hom (X^0, M) \xrightarrow{(g_0 \cdot x \cdot g_{-1})^*} \Hom(X^{-1}, M)\right) = \\ = \det(g_0^*) \cdot s(s,M) \cdot \det(g_{-1}^*).
	\end{gather*}
	The regular function $\chi(g) = \det (g_0^*) \cdot \det(g_{-1}^*)$ defines a character for the action of $G(X^{-1}, X^0)$, and as such, it factors through $G(X^{-1}, X^0)_{red}$. Recall $(g_0)_{red} = (g_{0}^i) \in \prod^n_{i = 1} GL_{\theta_{i}^0}(D_i)$. Since $\Hom(P_i,M) \cong M_i$ $\forall \ 1 \leq i \leq n$, where $M_i$ is the vector space in the vertex $i$ associated to $M$, we have that $(g_0)_{red}^*$ is a bloc-diagonal matrix in which the bloc corresponding to $g_{0}^i$ appears $\dim M_i$ times. Thus, $\det (g_0^*)_{red} = \prod_{i=1}^{n} \det(g_{0}^i)^{d_i}$ where $ \dim M = (\dim M_i)_{1 \leq i \leq n} = (d_i)_{1 \leq i \leq n}$ is the dimension vector of $M$. The same argument gives $\det (g_{-1})_{red} =  \prod_{i=1}^{n} \det(g_{-,i})^{d_i}$ and so $s(g \cdot X, M) = \chi(g) \cdot s(X,M)$ where $\chi$ is of weight $([M], [M])$. 
\end{proof}

\begin{remark}\label{detervirtual}
	Let $X^{-1}$, $X^0$ and $M$ be as before. Consider now $R(X^{-1}\oplus P, X^0 \oplus P)$ for some $0 \neq P \in \proj \Lambda$. Since $\langle \left[X \oplus \text{\begin{tikzcd}[cramped, sep=tiny]  P \arrow[d, equal]\\ P \end{tikzcd}}\right], [M] \rangle = \langle [X], [M]\rangle = 0$, there is a regular function $s'(-,M)$ that is a semi-invariant for the action of $G(X^{-1}\oplus P, X^0 \oplus P)$ on $R(X^{-1}\oplus P, X^0 \oplus P)$ and satisfies that $$s'\left(\begin{pmatrix} - & 0 \\ 0 & \Id_P \end{pmatrix}\right) = s(-,M)$$ for any $x \in R(X^{-1}, X^0)$, where $s(-,M)$ is as in \cref{determinv}. Let $x' \in R(X^{-1}\oplus P, X^0 \oplus P)$ and suppose that it belongs to the orbit of the point $\begin{pmatrix} x & 0 \\ 0 & \Id_P \end{pmatrix}$ for some $x \in R(X^{-1}, X^0)$. Then, there exists $g \in G(X^{-1}\oplus P, X^0 \oplus P)$ such that
	\begin{gather*}
		s'(x',M) = s'\left( g \cdot {\begin{pmatrix} x & 0 \\ 0 & \Id_P \end{pmatrix}} \right) = \chi_{([M], [M])}(g) \cdot s'\left( {\begin{pmatrix} x & 0 \\ 0 & \Id_P \end{pmatrix}} \right) = \\ = \chi_{([M], [M])}(g) \cdot s(x, M)
	\end{gather*}
	Thus, $s'(x',M) \neq 0$ if and only if $s(x,M) \neq 0$.
\end{remark}

We now shift our attention back to $\K_\Lambda = \K^{[-1,0]}(\proj \Lambda)$. The last remark tells us that for any $X \in \K_\Lambda$ and $M \in \Mod \Lambda$ such that $\langle [X], [M] \rangle = 0$, the non-annihilation of the $s(-,M)$ does not depend on the representative of $X$ in $\C^{[-1, 0]}(\proj \Lambda)$. We rephrase \cref{defMsemistab1} in terms of the $s(-,M)$.

\begin{definition}[\textbf{$M$-semistability}]\label{defMsemistab2}
	Let $X \in \K^{[-1,0]}(\proj \Lambda)$ and $M \in \Mod \Lambda$. We say that $X$ is \textbf{$M$-semistable} if $\langle [X], [M]\rangle = 0$ and if there exist $x \in R(X^{-1}, X^0)$  such that $X \simeq \text{\begin{tikzcd}[cramped, sep=small]  X^{-1} \arrow[d, "x"] \\ X^0 \end{tikzcd}}$ and $s(x, M) \neq 0$.
\end{definition}

These semi-invariants and their links to cluster algebras where thoroughly studied by K.~Igusa, K.~Orr, G.~Todorov and J.~Weyman in \cite{igusa2009cluster,igusa2015modulated}. In their work, they define the ring of \textbf{virtual semi-invariants} for any $\theta \in \K_0(\proj \Lambda)$ (see \cref{ssforpp} for the definition), which can be interpreted as those semi-invariants that are well defined for objects in $\K_\Lambda$. Notably, they show that that when $\Lambda \simeq \Bbbk Q$, where $Q$ is a finite quiver without oriented cycles, then the ring of virtual semi-invariants is spaned by determinantal semi-invariants \cite[Theorem 6.4.1 (Virtual First Fundamental Theorem)]{igusa2009cluster}. Although we are interested in the question of whether this holds for a general finite-dimensional algebra $\Lambda$, the goal of this paper is to find a new categorical significance to this semi-invariants, inspired by all the theory branching off semistability theory in $\Mod \Lambda$.

\subsection{Semistability in $\Mod \Lambda$}\label{ss_stabmod}
Recall that that the notion of King's semistability on $\Mod\Lambda$, first introduced in~\cite{King}, gives rise to a certain class of wide subcategories of $\Mod \Lambda$. 
\begin{definition}\cite{King}\label{def-osem}
	Let $\theta \in K_0(\K_\Lambda)$, we say that $M \in \Mod\Lambda$ is \textbf{$\theta$ - semistable} if 
	\begin{enumerate}
		\item $\langle \theta, [M] \rangle = 0$;
		\item For every submodule $N \subset M$, $\langle \theta, [N] \rangle \leq 0$.
	\end{enumerate}
	We denote by $\Ww_{\theta} \subset \Mod \Lambda$ the wide subcategory whose objects are those who are $\theta$ - semistable.
\end{definition}
One of the key results in~\cite{King} is that, for a module $M$ of dimension $d$, the notion of being $\theta$-semistable is equivalent to the existence of a semi-invariant $f$ of weight $n \theta$ for some $n \in \mathbb{Z}_{\geq 0}$ over the variety of representations $\text{rep}(\Lambda, d)$ such that $f(M) \neq 0$. When $\Lambda$ is a finite-dimensional algebra over an algebraic closed field, all of these semi-invariants are generated by the rational functions $s(X,-)$ where $X \in \K_\Lambda$ such that $[X] = n \theta$ for some $n \in \mathbb{Z}_{\geq 0}$ \cite{King,derksen2000semi,schofield2001semi,domokos2002relative}. That is, $M$ is $\theta$-semistable if and only if there exists $X \in \K_\Lambda$ such that $s(X,M) \neq 0$. We rephrase this statement in terms of the subcategories $\Ww(X) \subset \Mod \Lambda$ defined in \cref{stabinK}. 

\begin{proposition}\label{unionofwide} \cite{King,derksen2000semi,schofield2001semi,domokos2002relative}
	Let $\theta \in K_0(\K_\Lambda)$, then $$\Ww_{\theta} = \bigcup_{\substack{X \in \K_\Lambda \\ [X] = \mathbb{Z}_{\geq 0} \theta}} \Ww(X) \ .$$
\end{proposition}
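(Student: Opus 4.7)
My plan is to combine two classical ingredients that the paragraph immediately preceding the proposition has already recalled: King's characterization of $\theta$-semistability via the non-vanishing of $\mathrm{GL}$-semi-invariants on the representation variety, and the fact that the ring of such semi-invariants is spanned by the determinantal semi-invariants $s(X,-)$ with $[X]\in\mathbb{Z}_{\geq 1}\theta$. The proof then reduces to translating the non-vanishing of $s(X,M)$ into the categorical statement $M\in\Ww(X)$ via \cref{defMsemistab2}.

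First I would fix $M\in\Mod\Lambda$, set $d=\dim M$, and recall that the cited results of King, Derksen--Weyman, Schofield--Van den Bergh and Domokos together yield the equivalence: $M\in\Ww_\theta$ if and only if there exist $n\geq 1$ and $X\in\K_\Lambda$ with $[X]=n\theta$ such that $s(X,M)\neq 0$. This is the crucial input, and I would use it as a black box.

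For the inclusion $\Ww_\theta\subseteq\bigcup_{[X]\in\mathbb{Z}_{\geq 0}\theta}\Ww(X)$, assume $M\in\Ww_\theta$. Applying the black box produces some $X\in\K_\Lambda$ with $[X]=n\theta$, $n\geq 1$, and a representative $x:X^{-1}\to X^0$ with $s(x,M)=\det(x^{*})\neq 0$. Since $\langle [X],[M]\rangle=n\langle\theta,[M]\rangle=0$, the $\Bbbk$-vector spaces $\Hom(X^{-1},M)$ and $\Hom(X^{0},M)$ have the same dimension, so $x^{*}$ is an isomorphism. By \cref{defMsemistab2}, $X$ is $M$-semistable, hence $M\in\Ww(X)$. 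Conversely, if $M\in\Ww(X)$ for some $X\in\K_\Lambda$ with $[X]=n\theta$ (and $n\geq 1$; the degenerate case $n=0$ forces $[X]=0$ and is automatic), then by definition $x^{*}$ is an isomorphism so $s(X,M)\neq 0$, and by \cref{detervirtual} this is independent of the chosen representative. The black box (again) gives $M\in\Ww_{n\theta}=\Ww_\theta$, the last equality because the defining inequalities in \cref{def-osem} are invariant under positive rescaling of $\theta$.

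The only point that requires care is the dual of \cref{determinv}: one must check that, as a function of $M$ (with $X$ fixed), the determinantal function $s(X,-)$ is a $\mathrm{GL}(d)$-semi-invariant on $\rep(\Lambda,d)$ of weight $\chi_{[X]}=\chi_{n\theta}$. This is exactly the computation of \cref{determinv} with the roles of the action of $G(X^{-1},X^{0})$ on $R(X^{-1},X^{0})$ and the $\mathrm{GL}(d)$-action on $\rep(\Lambda,d)$ interchanged; the bookkeeping of indices is the main (mild) obstacle, and no genuinely new argument is needed beyond that.
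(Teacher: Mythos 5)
Your proof is correct and follows essentially the same route as the paper's: both invoke, as a black box, the classical result that the weight-$n\theta$ semi-invariants on $\rep(\Lambda,\dim M)$ are spanned by the determinantal functions $s(X,-)$ with $[X]=n\theta$, and then translate non-vanishing of $s(X,M)$ into $x^*$ being an isomorphism, i.e.\ $M\in\Ww(X)$, exactly as in \cref{defMsemistab2}. The one inaccurate clause is your parenthetical that the $n=0$ case ``is automatic'': any $X$ with $[X]=0$ (e.g.\ $X=0$, for which $\Ww(0)=\Mod\Lambda$) would break the inclusion $\Ww(X)\subseteq\Ww_\theta$, so that case must be excluded (the intended index set is $n\geq 1$) --- though the paper's own statement and proof are equally imprecise on this point.
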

\begin{proof}
	If $M$ is $\theta$-semistable, there exists $X =$ \begin{tikzcd}[cramped, sep=small]X^{-1} \arrow[d, "x"] \\ X^0 \end{tikzcd} $\in \K_\Lambda$ such that $[X]=n \theta$ for some $n \in \mathbb{Z}_{\geq 0}$ and $s(X,M) \neq 0$, which in turns translates to $x^*$ being an isomorphism. We get that for every $M \in \Ww_{\theta}$, there is $X$ such that $M \in \Ww(X)$, and so $\Ww_{\theta} \subset \bigcup_{\substack{X \in \K_\Lambda \\ [X] = \mathbb{Z}_{\geq 0} \theta}} \Ww(X)$. The other inclusion follows from the fact that $s(X,-)$ is a semi-invariant of weight $[X] = n\theta$ for some $n \in \mathbb{Z}_{\geq 0}$.
\end{proof}

The following result shows the relation between semistability in $\Mod \Lambda$ and $\tau$-tilting theory. It has been proved in full generality in \cite{yurikusa2018wide, brustle2019wall}. We include here a proof when $\Lambda$ is an algebra over a algebraically closed field, to showcase the relevance of the $s(X,-)$ invaraints, and why one could be brought to define semistability in their terms. 

\begin{theorem}~\cite{yurikusa2018wide, brustle2019wall}\label{widesemistable}
	Let $U \in \K_\Lambda$ be a presilting complex. Then $\Ww_{[U]} = {}^{\perp}H^{-1}(\nu U) \cap H^0(U)^{\perp} = \Ww(U)$ where $\nu$ is the Nakayama fuctor.
\end{theorem}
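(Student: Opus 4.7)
The plan is to establish the two equalities $\Ww(U) = {}^\perp H^{-1}(\nu U) \cap H^0(U)^\perp$ and $\Ww_{[U]} = \Ww(U)$ separately. For the first, I would decompose $U = U_s \oplus P[1]$ with $P \in \proj \Lambda$ and $U_s : U_s^{-1} \xrightarrow{u_s} U_s^0$ a minimal projective presentation of $H^0(U)$, and then explicitly compute the kernel and cokernel of the defining map $u^* : \Hom(U^0, M) \to \Hom(U^{-1}, M)$. Since $u^*$ splits as $(u_s^*, 0)$ into $\Hom(U_s^{-1}, M) \oplus \Hom(P, M)$, its kernel is $\Hom(H^0(U), M)$. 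For the cokernel, the defining exact sequence $0 \to \tau H^0(U) \to \nu U_s^{-1} \to \nu U_s^0$ of the Auslander-Reiten translate, together with the Nakayama duality $\Hom(M, \nu Q) \cong D\Hom(Q, M)$ for $Q \in \proj \Lambda$, yields $\Coker(u_s^*) \cong D\Hom(M, \tau H^0(U))$. Combining with the decomposition $H^{-1}(\nu U) = \tau H^0(U) \oplus \nu P$ and the identity ${}^\perp\nu P = P^\perp$ shows that $u^*$ is an isomorphism if and only if $M \in H^0(U)^\perp \cap {}^\perp H^{-1}(\nu U)$.

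For the second equality, the inclusion $\Ww(U) \subseteq \Ww_{[U]}$ is immediate from \cref{unionofwide} applied to $X = U$. Conversely, let $M \in \Ww_{[U]}$; the presilting hypothesis makes $(H^0(U), P)$ a $\tau$-rigid pair, so $\Hom(H^0(U), \tau H^0(U)) = 0$ and $\Hom(P, H^0(U)) = 0$. The heart of the argument is to show $\Hom(H^0(U), M) = 0$: assume the contrary and let $I$ be the image of a non-zero morphism $H^0(U) \to M$, so that $I$ is at once a submodule of $M$ and a quotient of $H^0(U)$. The epimorphism $H^0(U) \twoheadrightarrow I$ transfers $\tau$-rigidity to give $\Hom(I, \tau H^0(U)) = 0$, and projectivity of $P$ together with $\Hom(P, H^0(U)) = 0$ gives $\Hom(P, I) = 0$. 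Applying the kernel/cokernel formula from the first step to $I$ yields
$$\langle [U], [I]\rangle = \dim_\Bbbk \Hom(H^0(U), I) \geq 1 > 0,$$
contradicting the King semistability inequality $\langle [U], [I] \rangle \leq 0$.

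With $M \in H^0(U)^\perp$ in hand, the hypothesis $\langle [U], [M] \rangle = 0$ combined with the kernel/cokernel formula rewrites as
$$\dim_\Bbbk \Hom(M, \tau H^0(U)) + \dim_\Bbbk \Hom(P, M) = 0,$$
and non-negativity of both summands forces $\Hom(M, \tau H^0(U)) = 0$ and $\Hom(P, M) = 0$, placing $M$ in ${}^\perp H^{-1}(\nu U)$ and therefore in $\Ww(U)$ by the first step.

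The main obstacle will be the careful bookkeeping in the first step, making sure the contributions of $U_s$ and $P[1]$ assemble so that the cokernel of $u^*$ is correctly identified in terms of $H^{-1}(\nu U)$ via the Nakayama functor and the defining sequence of $\tau$. Once this formula is set up, the second step is essentially a propagation of $\tau$-rigidity of $(H^0(U), P)$ to quotients of $H^0(U)$, followed by an automatic numerical conclusion from $\langle [U], [M] \rangle = 0$.
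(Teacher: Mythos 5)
Your proof is correct, but for the hard inclusion $\Ww_{[U]}\subseteq\Ww(U)$ it takes a genuinely different route from the paper. The paper argues geometrically: given $M\in\Ww_{[U]}$, \cref{unionofwide} produces some $X$ with $[X]=n[U]$ and $s(X,M)\neq 0$; non-vanishing is an open condition, so it holds at the generic point, and a Dehy--Keller argument (openness and density of the orbit of $u^{\oplus n}$, available precisely because $U$ is presilting, together with \cite[Corollary 6.2.2]{igusa2009cluster} to pass between stabilized representation spaces) transfers the non-vanishing to $u$ itself. You instead never leave representation theory: writing $U=U_s\oplus P[1]$ with $U_s$ the minimal presentation of $H^0(U)$, you identify $\Ker(u^*)=\Hom(H^0(U),M)$ and $\Coker(u^*)=D\Hom(M,\tau H^0(U))\oplus\Hom(P,M)$, deduce the formula $\langle[U],[N]\rangle=\dim\Hom(H^0(U),N)-\dim\Hom(N,\tau H^0(U))-\dim\Hom(P,N)$, and use $\tau$-rigidity of the pair $(H^0(U),P)$ propagated to images $I$ of maps $H^0(U)\to M$ to force $\Hom(H^0(U),M)=0$ from King's inequality, after which $\langle[U],[M]\rangle=0$ kills the two cokernel terms. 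Both arguments are sound; yours is essentially the proof of \cite{yurikusa2018wide, brustle2019wall} that the theorem cites, works over an arbitrary base field, and proves both directions of the equality $\Ww(U)={}^{\perp}H^{-1}(\nu U)\cap H^0(U)^{\perp}$ explicitly (the paper's sequence (\ref{nu-silting}) only spells out one direction), whereas the paper deliberately chooses the geometric route to showcase the role of the determinantal semi-invariants $s(X,-)$, which is the theme of \cref{s_geometry}. One small remark: your appeal to \cref{unionofwide} for the easy inclusion is harmless but avoidable, since your own kernel/cokernel formula already gives $\langle[U],[N]\rangle\leq 0$ for every submodule $N\subseteq M$ when $M\in\Ww(U)$.
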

\begin{proof}
	Let $U \in K_\Lambda$ be a presilting complex. By \cref{unionofwide}, we have that $\Ww(U) \subset \Ww_{[U]}$. Now consider $X \in \K_\Lambda$ such that $[X] = n [U]$ for some $n \in \mathbb{Z}_{\geq 0}$ and let $M \in \Ww(X)$. Then, there exist $x \in R(X^{-1}, X^0) = R$ such that $X \simeq $ \begin{tikzcd}[cramped, sep=small]X^{-1} \arrow[d, "x"] \\ X^0 \end{tikzcd} and $s(x,M) \neq 0$. Since this condition is open, the generic point $\eta \in R$ must satisfy that $s(\eta, M) \neq 0$. Because $U$ is presilting, we can choose a representative \begin{tikzcd}[cramped, sep=small]U^{-1} \arrow[d, "u"] \\ U^0 \end{tikzcd}, such that there are no non-zero common projective direct summands between $U^{-1}$ and $U^0$. In particular, we can suppose that there is $P \in \proj \Lambda$ such that $X^i = (U^i)^{\oplus n} \oplus P$ for $i\in \{-1, 0\}$. Let $s'(-,M)$ be the determinantal semi-invariant definided by $M$ on $R' = R((U^{-1})^{\oplus n}, (U^0)^{\oplus n})$, by~\cite[Corollary 6.2.2]{igusa2009cluster} we get that $s'(\eta', M) \neq 0$ since $s(\eta, M) \neq 0$, where $\eta'$ is the generic point in $R'$. By a Dehy-Keller argument, we know that since $U^{\oplus n}$ is a 2-term presilting complex, the orbit $\mathcal{O}_{u^{\oplus n}}$ inside $R'$ must be open and dense. In particular, $\mathcal{X} \cap \mathcal{O}_{u^{\oplus n}} \neq \emptyset$, where $\mathcal{X} = \{y \in R' \ | \ s'(y, M) \neq 0\}$. Then, there must exist $u' \in \mathcal{O}_{u^{\oplus n}}$ such that $s(u',M) \neq 0$. Since $U$ is a direct summand of $U^{\oplus n} \simeq$ \begin{tikzcd}[cramped, sep=small]{U^{-1}}^{\oplus n} \arrow[d, "u'"] \\ {U^0}^{\oplus n} \end{tikzcd}, we must have that $s(u,M) \neq 0$. We get that $\Ww(X) \subset \Ww(U)$ for all $X$ such that $[X] = n[U]$ for $n \in \mathbb{Z}_{\geq 0}$; and thus, $\Ww(U) = \Ww_{[U]}$. The rest of the proposition follows from the fact that we have an exact sequence 
	\begin{equation}\label{nu-silting}
		\begin{tikzcd}[cramped, sep=small] 
			\Hom(M, H^{-1}(\nu U)) \arrow[r] &\Hom(M, \nu U^{-1}) \arrow[r, "(\nu u)^*"] \arrow[d, "\simeq"] &\Hom(M, \nu U^0) \arrow[d, "\simeq"] & \\
			& D\Hom(U^{-1}, M) \arrow[r, "D(u^*)"] & D\Hom(U^0, M) \arrow[r] & D\Hom(H^0(U), M),
		\end{tikzcd}
	\end{equation}
	so that, if $s(U,M) \neq 0$ then $u^*$ must be an isomorphism and $\Hom(M, H^{-1}(\nu U)) = \Hom(H^0(U, M)) = 0$.
\end{proof}
\begin{remark}\label{remarkonsilting}
	We always have the short exact sequence above, $U = \text{\begin{tikzcd}[cramped, sep=small]  U^{-1} \arrow[d, "u"]\\ U^0 \end{tikzcd}}$ need not be presilting. This fact is only used to prove that $\Ww(U) = \Ww_{[U]}$, and it is not essential, we only need to suppose that either $u^{\oplus n}$ has an open dense orbit or that it is the generic point inside $\mathcal{U}$ for every $n \in \mathbb{Z}_{\geq 0}$. To get the short exact sequence (\ref{nu-silting}) only apply $\nu$ to the exact sequence $U^{-1} \xrightarrow{u} U^0 \rightarrow H^0(U)$ and use the fact that $\nu$ is right exact, that $\Hom(M,-)$ and $\Hom (-,M)$ are left-exact, and that for any projective module $\Hom(M,\nu P) \simeq D\Hom(P,M)$.
\end{remark}

\textit{Dehy-Keller argument:} Let $U = $\begin{tikzcd}[cramped, sep=small]U^{-1} \arrow[d, "u"] \\ U^0 \end{tikzcd} be any presilting object in $\K_\Lambda$. We only need to prove that the differential map $$\Aut(U^{-1})^{op} \times \Aut(U^0) \xrightarrow{\bar{u}} \Hom(U^{-1}, U^0)$$ given by $(g_{-1},g_0) \mapsto g_0 \cdot u - u \cdot g_{-1}$ surjective. We first recall that since $U$ is presilting, then it must be isomorphic to $P \oplus V^{-1} \xrightarrow{(0, v)} U^0$ where the complex $V = V^{-1} \xrightarrow{v} U^0$ is the minimal projective presentation of $H^0(U)$ and $\Hom(P, H^0(U))= 0$. Using the argument in section 2.1 of~\cite{dehy2008combinatorics} we know that the map $\Aut(V^{-1}) \times \Aut(U^0) \xrightarrow{\bar{v}} \Hom(V^{-1}, U^0)$ is surjective. Now let $f = (f_P, f') \in \Hom(P\oplus V^{-1}, U^0)$, then there exists $(g_{-1},g_0) \in \Aut(V^{-1}) \times \Aut(U^0)$ such that $f' = g_0 \cdot v - v \cdot g_{-1}$. Consider $\pi_V$ the cokernel of $v$, since $\Hom(P, H^0(U))= 0$, we have that the composition $\pi_V \cdot f_P = 0$ and $f_P$ restricts to $\Imm v$. But $V^{-1}$ is a projective cover for $\Imm v = \Ker \pi_v$, so there exists $h : P \rightarrow V^{-1}$ such that $f_P = v \cdot h$.  This gives us the result, since $\begin{pmatrix} \Id_P & 0 \\ -h & g_{-1} \end{pmatrix} \times g_0 \in \Aut(U^{-1}) \times \Aut(U^0)$ is such that $$g_0 \cdot (0, v) - (0, v) \cdot \begin{pmatrix} \Id_P & 0 \\ -h & g_{-1} \end{pmatrix} = (0, g_0\cdot v) - (-v \cdot h, v \cdot g_{-1}) = (f_P, f').$$

\subsection{Towards a notion of semistability in $\K^{[-1,0]}(\proj \Lambda)$}\label{ssforpp}
As in \cite{King}, one would like there to exist a \textit{numerical} notion of semistability in $\K_\Lambda = \K^{[-1,0]}(\proj \Lambda)$. Before exploring this idea, let us first recall the notion of virtual semi-invariant, as defined in \cite[Section 6.2]{igusa2009cluster}. 

Let $\theta \in K_0(\proj \Lambda) \simeq \mathbb{Z}^n$, and let $PD(\theta) = \{(\eta^{-1}, \eta^0) \in \mathbb{Z}_{\geq 0}^{2n} \ | \ \eta^0 - \eta^{-1} = \theta\}$. Note if we let let $\theta^{-1} = -(\min(0, \theta_i))_{1 \leq a \leq n}$ and $\theta^0 = (\max{0, \theta_i})_{1 \leq a \leq n}$, then for any $(\eta^{-1}, \eta^0) \in PD(\theta)$, there exist $\gamma \in \mathbb{Z}_{\geq 0}^n$ such that $\eta^i = \theta^i + \gamma$ for $i \in \{-1, 0\}$.  For $\gamma \in \mathbb{Z}_{\geq 0}^{n}$ we define $P(\gamma) = \bigoplus_{i = 1}^n P_i^{\gamma_i}$. Recall that for any $(\eta^{-1}, \eta^0) \in \mathbb{Z}_{\geq 0}^{2n}$ and $\gamma \in \mathbb{Z}_{\geq 0}^{n}$ we have maps $$R(P(\eta^{-1}), P(\eta^0)) \longrightarrow R(P(\eta^{-1})\oplus P(\gamma), P(\eta^0)\oplus P(\gamma))$$

who take any $x \in R(P(\eta^{-1}), P(\eta^0))$ and sends it to $\begin{psmallmatrix} x & 0 \\ 0 & \Id_{P(\gamma)} \end{psmallmatrix}$. We refer to these as \textbf{stabilization maps}. We define the \textbf{virtual representation space} of $\theta$ as the direct limit over $PD(\theta)$

$$R^{vir}(\theta) = \varinjlim_{(\eta^{-1},\eta^0) \in PD(\theta)} R(P(\eta^{-1}), P(\eta^0)).$$

Every $R(\eta^{-1},\eta^0) = R(P(\eta^{-1}), P(\eta^0))$ gives rise to a ring of semi-invarians for the action of the group $G(\eta^{-1},\eta^0) = G(P(\eta^{-1}), P(\eta^0))$. The restriction maps induced by the functions $x \mapsto \begin{psmallmatrix} x & 0 \\ 0 & \Id_P \end{psmallmatrix}$ described above define an inverse system over $PD(\theta^0-\theta^{-1})$ of the rings $SI(R(\theta^{-1},\theta^0))^{G(\theta^{-1},\theta^0)}$. The ring \textbf{virtual semi-invariants} for $\theta \in \mathbb{Z}^n$ is the inverse limit over $PD(\theta)$

$$SI^{vir}(\theta)= \varprojlim_{(\eta^{-1},\eta^0) \in PD(\theta)} SI(R(\eta^{-1},\eta^0))^{G(\eta^{-1},\eta^0)}.$$

A \textbf{virtual semi-invariant} associated to $X$ is element $f$ in $SI^{vir}([X])$. The following proposition tell us that, up to adding enough $\text{\begin{tikzcd}[cramped, sep=tiny]  P \arrow[d, equal]\\ P \end{tikzcd}}$ summands to a given representative of an object $X \in K_\Lambda$, virtual semi-invariants have weights given by $\bar{d} = (d,d)$ for some $d \in \mathbb{Z}^n$. We say that $f \in SI^{vir}(\theta)$ has weight $d \in \mathbb{Z}^n$ when this is the case. 

\begin{proposition}~\cite[Proposition 3.3.3]{igusa2009cluster} Consider $R(X^{-1}, X^0)$ where $X^{-1} = \bigoplus_{i=0}^n P_i^{\theta^{-1}_i}$ and $X^0 = \bigoplus_{i=0}^n P_i^{\theta^0_i}$, with its usual $G(X^{-1}, X^0)$ action. Suppose  there is a non-zero $f \in SI(R)^{G(X^{-1}, X^0), \chi}$  with $\chi = \chi_{\bar{d}}$ for some $\bar{d} = (d^{-1}, d^0) \in \mathbb{Z}^{2n}$. If, for every $1 \leq i \leq n$, both $d^0_i \neq 0$ and $d^{-1}_i \neq 0$ then $d^{-1}_i = d^0_i$.
\end{proposition}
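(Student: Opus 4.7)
My plan is to reduce the claim, vertex by vertex, to a classical invariant-theoretic statement on the diagonal block $V_{ii} := \Hom_\Lambda(P_i^{\theta_i^{-1}}, P_i^{\theta_i^0})$, viewed as a subspace of $R(X^{-1}, X^0)$. Fix an index $i$ with $d_i^{-1} \neq 0$ and $d_i^0 \neq 0$; necessarily $\theta_i^{-1}, \theta_i^0 > 0$, since otherwise $\chi_{\bar d}$ is insensitive to the corresponding $d_i^{\epsilon}$. Let $L_i := GL_{\theta_i^{-1}}(\mathbb{C}) \times GL_{\theta_i^0}(\mathbb{C}) \subset G_{red}$ be the Levi factor at vertex $i$, embedded so as to act trivially on every other isotypic component. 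Then $V_{ii}$ is $L_i$-stable, and I would consider the restriction $f_i := f|_{V_{ii}}$.

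In the case $f_i \not\equiv 0$, the restriction is a nonzero $L_i$-semi-invariant of weight $\det_{-1}^{d_i^{-1}} \cdot \det_0^{d_i^0}$. Since $V_{ii} \cong \En_\Lambda(P_i) \otimes_{\mathbb{C}} \Hom_{\mathbb{C}}(\mathbb{C}^{\theta_i^{-1}}, \mathbb{C}^{\theta_i^0})$ as an $L_i$-module, the first fundamental theorem of invariant theory for $GL_a \times GL_b$ on matrix spaces (applied component-wise with respect to a basis of $\En_\Lambda(P_i)$) describes the nonzero semi-invariants as polynomials in $(\theta_i^0 \times \theta_i^{-1})$-minors of matrices built from the $\En_\Lambda(P_i)$-coordinates; such minors exist only if $\theta_i^{-1} = \theta_i^0$, and each has $L_i$-weight $(1,1)$. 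This forces $d_i^{-1} = d_i^0 \geq 0$ and settles the proposition in this case.

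The remaining case $f_i \equiv 0$ is the main difficulty, which I would resolve by arguing it is incompatible with the hypothesis $d_i^{-1}, d_i^0 \neq 0$. Consider the two one-parameter subgroups $\lambda_i^{\epsilon} \colon \mathbb{C}^* \to G$ that scale the $i$-th isotypic component of $X^{\epsilon}$ by $t$ and act as the identity on every other indecomposable summand, for $\epsilon \in \{-1, 0\}$. Following the proof of \cref{naive}, the limit $\lim_{t \to 0} \lambda_i^{\epsilon}(t) \cdot x$ exists for every $x \in R$, as it cancels all blocks touching $P_i$ on the $\epsilon$-side, and the equivariance $f(\lambda_i^{\epsilon}(t) \cdot x) = t^{\theta_i^{\epsilon} d_i^{\epsilon}} f(x)$ combined with $f \neq 0$ yields $\theta_i^{\epsilon} d_i^{\epsilon} \geq 0$. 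For the reverse inequality, I would examine the $t \to \infty$ behaviour on the $L_i$-stable locus of elements of $R$ having no component on the blocks $V_{i, \bullet}$ or $V_{\bullet, i}$ other than $V_{ii}$; the assumption $f_i \equiv 0$ ensures that $f$ vanishes on the relevant limit points, and transporting arbitrary points onto this locus using the $U$-invariance of $f$ (automatic since $\chi_{\bar d}|_U \equiv 1$) together with $G_{red}$-translations gives $\theta_i^{\epsilon} d_i^{\epsilon} \leq 0$, hence $d_i^{\epsilon} = 0$, contradicting the hypothesis.

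The main obstacle is executing this last step cleanly: the subspaces involved are not $G$-stable, so the equivariance at infinity requires combining the Levi action of $L_i$ with the $U$-invariance of $f$ in a careful way. A more streamlined alternative that I would also pursue is to invoke the Derksen--Weyman--Schofield structure theorem describing $\mathbb{C}[R]^{G, \chi}$ as generated by determinantal semi-invariants $s(-, M)$ for $M \in \Mod \Lambda$ with appropriate dimension vector; by \cref{determinv}, each such generator has weight $\chi_{([M], [M])}$, so every character carrying a nonzero semi-invariant is a non-negative integer combination of such $\chi_{([M], [M])}$. In particular $d^{-1}$ and $d^0$ agree componentwise on the support of $\bar d$, yielding the proposition at once.
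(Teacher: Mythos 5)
The paper gives no proof of this statement; it is imported verbatim from \cite[Proposition 3.3.3]{igusa2009cluster}, so your argument can only be judged on its own merits, and it has two genuine gaps. First, the invariant theory in Case A is wrong whenever $\dim_\Bbbk \En(P_i)\geq 2$. In that case $V_{ii}\cong \En(P_i)\otimes M_{\theta_i^0\times\theta_i^{-1}}(\Bbbk)$ is a direct sum of $m\geq 2$ copies of the matrix space, and the $GL_a\times GL_b$-semi-invariants of $M_{b\times a}^{\oplus m}$ are not polynomials in balanced $a\times a$ determinants: for $a=1$, $b=2$, $m=2$ the function $(A,B)\mapsto \det(A\,|\,B)$ is a nonzero semi-invariant of weight $\det_{GL_1}^{2}\cdot\det_{GL_2}^{1}$, so neither $\theta_i^{-1}=\theta_i^0$ nor equality of the two exponents follows from $L_i$-semi-invariance alone. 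What eliminates such functions is invariance under the unipotent radical $U$, which your argument never uses; only after showing that a $U$-invariant function on $V_{ii}$ depends solely on the $\Bbbk\Id_{P_i}$-isotypic component does the classical one-matrix statement apply.

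Second, and fatally, Case B is not a degenerate case to be excluded by contradiction: it is the generic case, and the contradiction you aim for is false. Since $f$ is an eigenvector for the central torus acting by scalars on each isotypic summand, every monomial of $f$ uses exactly $\theta_j^{-1}d_j^{-1}$ coordinates from the blocks with source in the $P_j$-isotypic part of $X^{-1}$, for every $j$; hence as soon as some $j\neq i$ has $\theta_j^{-1}d_j^{-1}>0$, no monomial of $f$ lies in $\Bbbk[V_{ii}]$ and $f|_{V_{ii}}=0$ automatically. Concretely, take $\Lambda=\Bbbk Q$ for $Q\colon 1\to 2$ and $X^{-1}=X^0=P_1\oplus P_2$, so that $R=\En(P_1\oplus P_2)$ with diagonal coordinates $x_{11}\in\Hom(P_1,P_1)$ and $x_{22}\in\Hom(P_2,P_2)$; then $f=x_{11}x_{22}$ is a nonzero semi-invariant of weight $\chi_{\bar d}$ with $d^{-1}=d^0=(1,1)$, all entries nonzero, yet $f|_{V_{11}}=0$. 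So $f|_{V_{ii}}=0$ is perfectly compatible with the hypothesis, the intended contradiction cannot exist, and you acknowledge being unable to complete the limit argument in any case. The restriction has to be taken to the whole diagonal $\bigoplus_j \Bbbk\Id_{P_j}\otimes M_{\theta_j^0\times\theta_j^{-1}}(\Bbbk)$ at once, and the genuine content of the proposition is the nonvanishing of $f$ on that locus, for which you offer no argument. The proposed shortcut does not repair this: the paper itself notes that the Virtual First Fundamental Theorem \cite[Theorem 6.4.1]{igusa2009cluster} is known only for path algebras of acyclic quivers, and the Derksen--Schofield--Weyman results describe semi-invariants of $\rep(\Lambda,d)$ rather than of the presentation spaces $R(X^{-1},X^0)$, so invoking them assumes a statement at least as strong as the one to be proved.
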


\begin{remark}\label{gemetricss2}
	Let $X \in \K_\Lambda$. Suppose that the exists a virtual semi-invariant $f$ of weight $d \in K_0(\Mod \Lambda)$ such that $f(X) \neq 0$. In particular, there exists a representative $X \simeq \text{\begin{tikzcd}[cramped, sep=small]  X^{-1} \arrow[d, "x"] \\ X^0 \end{tikzcd}}$ such that $f$ defines a semi-invariant for the action of $G(X^{-1}, X^0)$ over $R(X^{-1}, X^0)$ with $f(x) \neq 0$, that is, $x$ is geometrically semistable. We say that $X \in \K_\Lambda$ is geometrically $d$-semistable for $d \in K_0(\Mod \Lambda)$ if there exists a virtual semi-invariant $f$ of weight $d$ such that $f(X) \neq 0$.
\end{remark} 

\cref{detervirtual} implies that the $s(-,M)$ are virtual semi-invariants of weight $\dim M$ for those $\theta \in \mathbb{Z}^n$ such that $\langle \theta, [M]\rangle=0$. Then, if $X \in \K_\Lambda$ is $M$-semistable for $M \in \Mod \Lambda$, it is geometrically $[M]$-semistable. Suppose now that $X \in \K_\Lambda$ is geometrically $d$-semistable By \cref{naive}, we get that 	
\begin{itemize}
	\item[$-$] $\langle (-[X^{-1}], [X^0]), (d,d) \rangle = 0$
	\item[$-$] For any inflation $Y \rightarrowtail X$ in $\C^{[-1,0]}(\proj \Lambda)$, we must have that $$\langle (-[Y^{-1}], [Y^0]), (d,d) \rangle \geq 0.$$
\end{itemize}
Recall that for any inflation $Y \rightarrowtail X$ in $\K_\Lambda$ we can find representatives of $Y$ and $X$ that give an inflation in $\C^{[-1,0]}(\proj \Lambda)$. The following definition is an attempt to summarize these facts. 

\begin{definition}[\textbf{Numerical semistability}]\label{numericalss}
	Let $X \in \K_\Lambda$ and $d \in \mathcal{K}_0(\Mod \Lambda)$. We say that $X$ is numerically $d$-semistable if 
	\begin{enumerate}
		\item $\langle [X], d\rangle = 0$,
		\item For every inflation $ Y \rightarrowtail X$ we have $\langle [Y], d \rangle \geq  0$.
	\end{enumerate}	
\end{definition}

The following theorem links both numerical semistability and $M$-semistability. 

\begin{theorem}\label{Mimplies[M]}
	Let $X \in \K_\Lambda$ and $M \in \Mod\Lambda$ with corresponding class $[M]$ in $\mathcal{K}_0(\Mod\Lambda)$. Suppose $X$ is $M$-semistable, then $X$ is numerically $[M]$-semistable.
\end{theorem}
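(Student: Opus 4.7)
The first condition in \cref{numericalss}, namely $\langle [X], [M] \rangle = 0$, is immediate from the definition: by \cref{defMsemistab1}, $M$-semistability of $X$ forces $x^*$ to be a $\Bbbk$-linear isomorphism, whence $\dim_\Bbbk \Hom(X^0, M) = \dim_\Bbbk \Hom(X^{-1}, M)$. So the substance of the proof lies in verifying the inflation inequality.

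Given an inflation $Y \rightarrowtail X$ in $\K_\Lambda$, completed to a conflation $Y \rightarrowtail X \twoheadrightarrow Z$, the plan is to lift this to a conflation inside $\mathcal{C}^{[-1,0]}(\proj\Lambda)$ via \cref{definf}. After replacing the middle term $X$ by $X \oplus \text{\begin{tikzcd}[cramped, sep=tiny]  P \arrow[d, equal]\\ P \end{tikzcd}}$ for a suitable $P \in \proj\Lambda$, I obtain for each degree $i \in \{-1, 0\}$ a short exact sequence of projective modules $0 \to Y^i \to X^i \oplus P \to Z^i \to 0$. Applying $\Hom(-, M)$ (which is exact on these sequences since all terms are projective) then yields the commutative diagram with exact rows
\begin{center}
\begin{tikzcd}[cramped, column sep=small]
0 \arrow[r] & \Hom(Z^0, M) \arrow[r] \arrow[d, "z^*"] & \Hom(X^0 \oplus P, M) \arrow[r] \arrow[d, "(x \oplus \Id_P)^*"] & \Hom(Y^0, M) \arrow[r] \arrow[d, "y^*"] & 0 \\
0 \arrow[r] & \Hom(Z^{-1}, M) \arrow[r] & \Hom(X^{-1} \oplus P, M) \arrow[r] & \Hom(Y^{-1}, M) \arrow[r] & 0
\end{tikzcd}
\end{center}

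Since $X$ is $M$-semistable, $x^*$ is an isomorphism, and hence so is the middle vertical map $(x \oplus \Id_P)^* = x^* \oplus \Id_{\Hom(P,M)}$. A snake lemma argument on this 3-by-3 diagram then forces $y^*$ to be surjective, so $\dim_\Bbbk \Hom(Y^0, M) \geq \dim_\Bbbk \Hom(Y^{-1}, M)$, which is exactly $\langle [Y], [M] \rangle \geq 0$.

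There is no genuine obstacle here. The argument is essentially a reindexing of the commutative diagram~(\ref{cdiag}) already used in the proof of \cref{wide-thick}, and the only technical point is to notice that enlarging $X$ by an identity summand $\text{\begin{tikzcd}[cramped, sep=tiny]  P \arrow[d, equal]\\ P \end{tikzcd}}$ preserves the isomorphism condition on the middle map, so that $M$-semistability transfers to the chosen representative in $\mathcal{C}^{[-1,0]}(\proj\Lambda)$ without loss.
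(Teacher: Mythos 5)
Your proof is correct, but it is not the route the paper's official proof takes. The paper argues geometrically: $M$-semistability means the determinantal semi-invariant $s(-,M)$, which has weight $\chi_{([M],[M])}$, does not vanish at $x$, and then \cref{naive} (the one-parameter-subgroup computation from GIT) converts any inflation $Y\rightarrowtail X\oplus(P\xrightarrow{\ \Id\ }P)$ in $\C^{[-1,0]}(\proj\Lambda)$ into a one-parameter subgroup whose limit exists, forcing $\langle(-[Y^{-1}],[Y^0]),([M],[M])\rangle=\langle[Y],[M]\rangle\geq 0$. Your argument instead applies $\Hom(-,M)$ to the degreewise split short exact sequences of projectives coming from \cref{definf} and reads off the surjectivity of $y^*$ from the snake lemma, exactly as in the extension-closure step of \cref{wide-thick}. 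This is precisely the ``proof that does not rely on geometric arguments'' which the paper itself records immediately after its official proof, in a marginally leaner form: there one only uses that the inflation forces $Y^{-1}\to X^{-1}$ to be a section, so $\Hom(X^{-1},M)\to\Hom(Y^{-1},M)$ is already surjective and no completion to a conflation or stabilization by $P$ is needed. What the geometric proof buys is generality: it shows the numerical inequalities follow from the non-vanishing of \emph{any} virtual semi-invariant of weight $[M]$, i.e.\ geometric semistability already implies numerical semistability, of which the theorem is then a special case. Your homological argument buys self-containedness and makes no use of \cref{s_geometry} at all. Both are valid; yours is a legitimate alternative that the author endorses explicitly.
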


\begin{proof}
	Since $X$ is $M$-semistable, $s(X,M) \neq 0$. Let $Y \rightarrowtail X$ be an inflation in $\K_\Lambda$. By \cref{definf}, there exists $P \in \proj \Lambda$ such that $Y \rightarrowtail X \oplus \text{\begin{tikzcd}[cramped, sep=tiny]  P \arrow[d, equal]\\ P \end{tikzcd}}$ is an inflation in $\C^{[-1,0]}(\proj \Lambda)$. But $X \oplus \text{\begin{tikzcd}[cramped, sep=tiny]  P \arrow[d, equal]\\ P \end{tikzcd}}$ stills satisfies that the virtual semi-invariant $s(-,M)$ is non-zero. By \cref{naive}, and noting that $\langle (-[X^{-1}],[X^0]), ([M], [M])\rangle = \langle [X], [M] \rangle$ we get the result. 
\end{proof}

Note that if $Y \xrightarrow{f} X$ is a map inside $\K_ \Lambda$, in order for $\text{Cone}(f)$ to lie in $\mathcal{K}$ -which implies that $f$ is an inflation- the map $Y^{-1} \xrightarrow{f^{-1}} X^{-1}$ must be a section. If $X$ is $M$-semistable, we get a commutative square 
\begin{center}
	\begin{tikzcd}
		\Hom(X^0,M) \arrow[d, "\cong"] \arrow[r, "f^{0}"] & \Hom(Y^{0},M) \arrow[d, "y^*"] &  \\
		\Hom(X^{-1},M) \arrow[r, twoheadrightarrow, "f^{-1}"] & \Hom(Y^{-1},M) \arrow[r]& 0
	\end{tikzcd}
\end{center}
This implies that the map $y^*$ is an epimorphism, and thus $$\langle [Y], [X] \rangle = \dim \Hom(Y^0, M) - \dim \Hom(Y^{-1}, M) \geq 0.$$ This gives us a proof of \cref{Mimplies[M]} that does not rely on geometric arguments. 

We end this section with two examples. The first shows that numerical semistability does not necessarily imply geometric semistability in $\K_\Lambda$. The second tells us that the subcategory of objects that are numerically $d$-semistable is not closed under extensions in general.

\begin{example}[\textbf{Numerical semistability does not imply geometric semistability}]\label{ex_numssnotgeoss}
	Consider $\Lambda = \mathbb{C} Q / I$, where $Q$ is the quiver with relations 
	\begin{center}
		\begin{tikzcd}
			1 \arrow[r, bend left, "\alpha"{name= a}] & 2 \arrow[l, bend left, "\beta"{name= b}] \arrow[from=a, to=b, no head, dashed, bend left, shift right = 2.5ex, shorten = 0.95mm] \arrow[from=b, to=a, no head, dashed, bend left, shift right = 2.5ex, shorten = 0.95mm]
		\end{tikzcd}
	\end{center}
	and $I = \langle \alpha \beta, \beta \alpha \rangle$. Consider as well the objects $X_1 = P_1 \xrightarrow{\alpha} P_2$ and $X_2 = P_2 \xrightarrow{\beta} P_1$. We have $\langle [X_1], P_2 \rangle = 0 $ and that $\Hom_{\Lambda}(S_2, P_2) = 0$, and so, the virtual semi-invariant $s(X_1, P_2) = \det( \Hom(P_2, P_2) \xrightarrow{- \circ \alpha} \Hom(P_1, P_2))$ is non-zero, that is, $X_1$ is $P_2$-semistable. Likewise, $X_2$ is $P_1$-semistable since $s(X_2, P_1) \neq 0$. Consider now $X = X_1 \oplus X_2 \in \Hom(P_1 \oplus P_2, P_1 \oplus P_2)$ with representative $ x  = \begin{pmatrix} 0 & \beta \\ \alpha & 0 \end{pmatrix} \in R(P_1 \oplus P_2, P_1 \oplus P_2)$. Note that $X = $\begin{tikzcd}[cramped, sep=small] P_1 \oplus P_2 \arrow[d, "x"] \\ P_1 \oplus P_2 \end{tikzcd} satisfies the two properties of \cref{numericalss} for the vector $d = [P_1] = [P_2] =(1,1)$. Since $[X] = (0,0)$, it satisfies (1). On the other hand, $X_1$, $X_2$, $0 \rightarrow P_1$ and $0 \rightarrow P_1$ are the only possible indecomposable direct summands of objects that are the source of inflations into $X$, and they all satisfy condition (2). Thus $X$ is numerically $(1,1)$-semistable. We will show that there is no virtual semi-invariant $f$ of weight $(1,1)$ such that $f(X) \neq 0$. 
	
	Consider $$x' = \left( \begin{array}{cc|cc}  0 & \mathbf{0}_{1 \times n} & 1 & \mathbf{0}_{1 \times n} \\ \mathbf{0}_{n \times 1} & \Id_{n} & \mathbf{0}_{1 \times n} & \mathbf{0}_{n} \\ \hline 1 & \mathbf{0}_{1 \times n} & 0 & \mathbf{0}_{1 \times n} \\ \mathbf{0}_{n \times 1} & \mathbf{0}_{n} & \mathbf{0}_{n \times 1} & \Id_{n}\end{array} \right)$$
	the image of $x$ by the stabilization map $R(P_1 \oplus P_2, P_1 \oplus P_2) \rightarrow R(P_1^{n+1} \oplus P_2^{n+1} , P_1^{n+1}  \oplus P_2^{n+1})$ for $n\gg 0$. Let 
	\begin{gather*}
		R_n = R(P_1^{n+1} \oplus P_2^{n+1} , P_1^{n+1}  \oplus P_2^{n+1}) = \left( \begin{array}{cc}
			M_{n+1}(\mathbb{C}) \cdot \Id_{P_1} & M_{n+1}(\mathbb{C}) \cdot \beta \\ 
			M_{n+1}(\mathbb{C}) \cdot \alpha & M_{n+1}(\mathbb{C}) \cdot \Id_{P_2} \end{array}\right),
	\end{gather*}
	\begin{gather*}
		G_n = G(P_1^{n+1} \oplus P_2^{n+1} , P_1^{n+1}  \oplus P_2^{n+1}) = \\ \left( \begin{array}{cc}
			GL_{n+1}(\mathbb{C}) \cdot \Id_{P_1} & M_{n+1}(\mathbb{C}) \cdot \beta \\ 
			M_{n+1}(\mathbb{C}) \cdot \alpha & GL_{n+1}(\mathbb{C}) \cdot \Id_{P_2} \end{array}\right)^{op} \times \left( \begin{array}{cc}
			GL_{n+1}(\mathbb{C}) \cdot \Id_{P_1} & M_{n+1}(\mathbb{C}) \cdot \beta \\ 
			M_{n+1}(\mathbb{C}) \cdot \alpha & GL_{n+1}(\mathbb{C}) \cdot \Id_{P_2} \end{array}\right), 
	\end{gather*}
	\begin{gather*}
		U_n = \left( \begin{array}{cc}
			\Id_n \cdot \Id_{P_1} & M_{n+1}(\mathbb{C}) \cdot \beta \\ 
			M_{n+1}(\mathbb{C}) \cdot \alpha & \Id_n \cdot \Id_{P_2} \end{array}\right)^{op} \times \left( \begin{array}{cc}
			\Id_n \cdot \Id_{P_1} & M_{n+1}(\mathbb{C}) \cdot \beta \\ 
			M_{n+1}(\mathbb{C}) \cdot \alpha & \Id_n \cdot \Id_{P_2} \end{array}\right), 
	\end{gather*}
	\begin{gather*}
		(G_n)_{red} = \\ \left( \begin{array}{cc}
			GL_{n+1}(\mathbb{C}) \cdot \Id_{P_1} & 0\\ 
			0& GL_{n+1}(\mathbb{C}) \cdot \Id_{P_2} \end{array}\right)^{op} \times \left( \begin{array}{cc}
			GL_{n+1}(\mathbb{C}) \cdot \Id_{P_1} & 0\\ 
			0 & GL_{n+1}(\mathbb{C}) \cdot \Id_{P_2} \end{array}\right),
	\end{gather*}
	where $U_n$ is the unipotent radical of $G_n = (G_n)_{red} \rtimes U_n$. The group $G_n$ acts on $R_n$ in the following way. For every $g = \begin{pmatrix} X & Y \\ Z & W \end{pmatrix} \times \begin{pmatrix} X' & Y' \\ Z' & W' \end{pmatrix} \in G_n$ and $y = \begin{pmatrix} A & B \\ C & D \end{pmatrix} \in R_n$ we have that
	$$g \cdot y = \begin{pmatrix} X'AX & X'AY +X'BW +Y'DW \\ Z'AX+W'CX+W'DZ & W'DW \end{pmatrix}.$$
	Any $G_n$-semi-invariant $f$ on $R_n$ must be a $U_n$-invariant function. In particular, $f$ must be invariant for the action of the subgroup $V = \Id_{2n+2} \times \begin{pmatrix} \Id_{n+1} & M_{n+1}(\mathbb{C}) \cdot \beta \\ 0 & \Id_{n+1}\end{pmatrix}$. Since $\begin{pmatrix} \Id_{n+1} & Y' \\ 0 & \Id_{n+1} \end{pmatrix} \cdot \begin{pmatrix} A & B \\ C & D \end{pmatrix} = \begin{pmatrix} A & B + Y'D \\ C & D \end{pmatrix}$, then the ring of invariants $\mathbb{C}[R_n]^{V} = \mathbb{C}[A,B,C,D]^{V}$ is isomorphic to $\mathbb{C}[A,C] \otimes \mathbb{C}[B,D]^{V'}$, where $\mathbb{C}[B,D]^{V'}$ is the ring of invariants of the action of $V' = \begin{pmatrix} \Id_{n+1} & M_{n+1}(\mathbb{C}) \\ 0 & \Id_{n+1}\end{pmatrix}$ over the set of $2n+2$ by $n+1$ matrices $\begin{pmatrix} B \\D \end{pmatrix}$ given by left multiplication.
	In \cite[Section 4.2]{pommerening1987ordered}, the author explicity describes a basis of the invariant functions on the variety of $N\times M$ matrices under the action of certain unipotent subgroups of $GL_N(\mathbb{C})$ by left multiplication. Applying these results to our particular case, we obtain that $\mathbb{C}[B,D]^{V'} = \mathbb{C}[D]$, and thus $\mathbb{C}[R_n]^{V} = \mathbb{C}[A,C,D]$. A similar argument shows that the ring of invariants on $R_n$ by the action of $ H = \Id_{2n+2} \times \begin{pmatrix} \Id_{n+1} & 0 \\ M_{n+1}(\mathbb{C}) \cdot \alpha & \Id_{n+1}\end{pmatrix}$ is $\mathbb{C}[A,B,D]$, which in turn implies that $\mathbb{C}(R_n)^{V \cdot H} = \mathbb{C}[A,D]$, where $V \cdot H = \Id_{2n+2} \times \begin{pmatrix} \Id_{n+1} & M_{n+1}(\mathbb{C}) \cdot \beta \\ M_{n+1}(\mathbb{C}) \cdot \alpha & \Id_{n+1}\end{pmatrix}$.  Moreover, the functions in $\mathbb{C}[A,D]$ are also invariant for the action of $\begin{pmatrix} \Id_{n+1} & M_{n+1}(\mathbb{C})\cdot \beta\\ M_{n+1}(\mathbb{C})\cdot \alpha & \Id_{n+1}\end{pmatrix}^{op} \times \Id_{2n+2}$, which gives us that $\mathbb{C}[R_n]^{U_n} = \mathbb{C}[A,D]$.
	The only $(G_n)_{red}$ semi-invariants of weight $(1,1)$ on $\mathbb{C}[A,D]$ are the functions $f_k(A,D) = k \cdot \det(A) \cdot \det(D)$ for $k \in \mathbb{C}^*$ \cite[Theorem 4.4.4]{derksen2015computational}, and for all of them, $f_k(x') = 0$, since $A_{x'} = D_{x'}$ are not of full rank. That is, $X$ is not geometrically $(1,1)$-semistable.

\end{example}

\begin{example}[\textbf{Numerical semistability is not closed under extensions}]\label{ex_numericalnotgood}
	Consider again the quiver of the last example. As we have seen, both $X_1 = P_1 \xrightarrow{\alpha} P_2$ and $X_2 = P_2 \xrightarrow{\beta} P_1$ are numerically $(1,1)$-semistable. In $\K_\Lambda$, the following sequence is a conflation
	\begin{center}
		\begin{tikzcd}
			P_1 \arrow[r, equal] \arrow[d, "\alpha"] & P_1 \arrow[d, "0"] \arrow[r, "\alpha"] & P_2 \arrow[d, "\beta"] \\
			P_2 \arrow[r, "\beta"] &  P_1 \arrow[r, equal] & P_1
		\end{tikzcd}
	\end{center}
	However, $P_1 \oplus P_1[1]$ is not $(1,1)$-numerically semistable. Indeed $\langle -[P_1], (1,1) \rangle = -1 \leq 0$. 
\end{example}

\bibliographystyle{alpha}
\bibliography{biblio}

\end{document}